\documentclass{amsart}
\usepackage{amsmath,amsthm,amssymb,amscd}
\usepackage{tikz}
\usepackage[all,cmtip,color]{xy}
\usepackage[german,english]{babel}
\usepackage{mathrsfs}
\usepackage{wrapfig}
\usepackage[all,cmtip]{xy}
\input diagxy  
\usepackage{hyperref}
\usepackage[margin=1.2in]{geometry}

\DeclareMathOperator{\Nb}{\mathbf{N}}

\DeclareMathOperator{\Ch}{\mathsf{Ch}^b}
\DeclareMathOperator{\Chac}{\mathsf{Ch}^b_{ac}}

\DeclareMathOperator{\height}{ht}
\DeclareMathOperator{\dMod}{\mathsf{DMod}}
\DeclareMathOperator{\der}{\text{der}}

\DeclareMathOperator{\Cb}{\mathbf{C}}

\DeclareMathOperator{\Xc}{\mathcal{X}}

\DeclareMathOperator{\Lex}{\mathsf{Lex}}
\DeclareMathOperator{\st}{\mathsf{Cat}_{\infty,st}}

\DeclareMathOperator{\calk}{calk}

\DeclareMathOperator{\tate}{tate}
\DeclareMathOperator{\ind}{ind}
\DeclareMathOperator{\pro}{pro}

\DeclareMathOperator{\Calk}{\mathsf{Calk}}

\DeclareMathOperator{\Loc}{\mathsf{L}}
\DeclareMathOperator{\Comp}{\mathsf{C}}

\DeclareMathOperator{\Max}{Max}
\DeclareMathOperator{\Ho}{Ho}

\newcommand{\Eb}{\mathbb{E}}
\newcommand{\Gb}{\mathbb{G}}

\newcommand{\Vect}{\mathsf{Vect}}
\newcommand{\Index}{\mathsf{Index}}

\DeclareMathOperator{\Grp}{\mathsf{Grp}}
\DeclareMathOperator{\Spaces}{\mathsf{Space}}


\DeclareMathOperator{\Perf}{Perf}

\DeclareMathOperator{\ic}{ic}

\newcommand{\Proa}{\mathsf{Pro}^a}
\newcommand{\Inda}{\mathsf{Ind}^{\textit{a}}}

\newcommand{\Ind}{\mathsf{Ind}}
\DeclareMathOperator*{\colim}{\varinjlim}

\DeclareMathOperator{\Mod}{\textsf{Mod}}
\DeclareMathOperator{\Hom}{\textsf{Hom}}

\DeclareMathOperator{\VB}{VB}

\newcommand{\C}{\mathsf{C}}

\newcommand{\Gr}{Gr}
\newcommand{\Spectra}{\mathsf{Sp}}
\newcommand{\Sf}{\mathbf{S}}
\newcommand{\Tf}{\mathbf{T}}

\DeclareMathOperator{\id}{id}
\DeclareMathOperator{\grp}{\times}

\DeclareMathOperator{\Frac}{Frac}
\DeclareMathOperator{\Sp}{\mathsf{Sp}}

\newcommand{\Pro}{\mathsf{Pro}}

\DeclareMathOperator{\Sb}{\mathbb{S}}

\DeclareMathOperator{\Vv}{\mathcal{V}}
\DeclareMathOperator{\Uu}{\mathcal{U}}

\DeclareMathOperator{\invlim}{\varprojlim}

\DeclareMathOperator{\Fun}{Fun}

\DeclareMathOperator{\op}{op}

\DeclareMathOperator{\Ab}{\mathbb{A}}
\DeclareMathOperator{\Aff}{Aff}

\DeclareMathOperator{\QC}{\mathsf{DQCoh}}

\DeclareMathOperator{\Coh}{Coh}

\DeclareMathOperator{\F}{\mathcal{F}}

\DeclareMathOperator{\Kb}{\mathbb{K}}
\DeclareMathOperator{\image}{Im}

\DeclareMathOperator{\Aut}{Aut}
\DeclareMathOperator{\Map}{Map}
\DeclareMathOperator{\G}{\mathbb{G}}

\DeclareMathOperator{\GL}{GL}

\DeclareMathOperator{\D}{\mathsf{D}}

\DeclareMathOperator{\Kk}{\mathcal{K}}

\DeclareMathOperator{\Gg}{\mathcal{G}}

\DeclareMathOperator{\Ss}{\mathcal{S}}

\DeclareMathOperator{\Spec}{Spec}
\DeclareMathOperator{\nTate}{\mathit{n}\text{-}\mathsf{Tate}}
\DeclareMathOperator{\Tate}{\mathsf{Tate}}
\DeclareMathOperator{\elTate}{\mathsf{Tate}^{\textit{el}}}

\DeclareMathOperator{\End}{\mathsf{End}}

\DeclareMathOperator{\Oo}{\mathcal{O}}
\DeclareMathOperator{\red}{red}

\theoremstyle{plain}
\newtheorem{definition}{Definition}[section]
\newtheorem{theorem}[definition]{Theorem}
\newtheorem{theoremc}[definition]{Theorem-Construction}
\newtheorem{proposition}[definition]{Proposition}

\newtheorem{corollary}[definition]{Corollary}

\newtheorem{lemma}[definition]{Lemma}

\newtheorem*{mainprinc}{Main Principle}

\theoremstyle{remark}
\newtheorem{rmk}[definition]{Remark}
\newtheorem{example}[definition]{Example}
\newtheorem{claim}[definition]{Claim}
\newtheorem{warning}[definition]{Warning}

\newtheorem{afterword}[definition]{Afterword}

\begin{document}
\title[Contou-Carr\`ere symbol and reciprocity]{A Generalized Contou-Carr\`ere Symbol \\and its Reciprocity Laws in Higher Dimensions}

\author{Oliver Braunling}
\address{Department of Mathematics, University of Freiburg, Germany}
\email{oliver.braeunling@math.uni-freiburg.de}

\author{Michael Groechenig}
\address{Department of Mathematics, University of Toronto, Canada}
\email{michael.groechenig@utoronto.ca}

\author{Jesse Wolfson}
\address{Department of Mathematics, University of California - Irvine, USA}
\email{wolfson@uci.edu}

\begin{abstract}
We generalize Contou-Carr\`ere symbols to higher dimensions. To an $(n+1)$-tuple $f_0,\dots,f_n \in A((t_1))\cdots((t_n))^{\times}$, where $A$ denotes an algebra over a field $k$, we associate an element $(f_0,\dots,f_n) \in A^{\times}$, extending the higher tame symbol for $k = A$, and earlier constructions for $n = 1$ by Contou-Carr\`ere, and $n = 2$ by Osipov--Zhu. It is based on the concept of \emph{higher commutators} for central extensions by spectra. Using these tools, we describe the higher Contou-Carr\`ere symbol as a composition of boundary maps in algebraic $K$-theory, and prove a version of Parshin--Kato reciprocity for higher Contou-Carr\`ere symbols.
\end{abstract}

\thanks{O.B.\ was supported by DFG SFB/TR 45 ``Periods, moduli spaces and arithmetic of algebraic varieties'', the Alexander von Humboldt Foundation, and DFG GK1821 \textquotedblleft Cohomological Methods in Geometry\textquotedblright\ . M.G.\ was partially supported by EPRSC Grant No.\ EP/G06170X/1. J.W.\ was partially supported by an NSF Graduate Research Fellowship under Grant No.\ DGE-0824162, by an NSF Research Training Group in the Mathematical Sciences under Grant No.\ DMS-0636646, and by an NSF Post-doctoral Research Fellowship under Grant No.\ DMS-1400349. J.W. was a guest of K. Saito at IPMU while this paper was being completed. Our research was supported in part by NSF Grant No.\ DMS-1303100 and EPSRC Mathematics Platform grant EP/I019111/1.}

\maketitle


\section{Introduction}

This article concerns a higher-dimensional generalization of the
Contou-Carr\`{e}re symbol \cite{MR1272340}. The original symbol plays a key
role in the local theory of generalized Jacobians for a relative curve, as
developed by Contou-Carr\`{e}re \cite{MR552212}, \cite{MR1106897}. This theory
was inspired by a conjectural picture due to Grothendieck \cite{gsletter}. If
the relative curve is just a plain curve over a field, the symbol specializes
to the tame symbol. We review this in detail along with an explicit definition
below in \S \ref{sect_BackToSymbol}. But in general the Contou-Carr\`{e}re
symbol is far richer. For example, one recovers the residue symbol in its
tangent space. This aspect cannot be seen in the tame symbol.

If $G:\mathsf{Rings}\rightarrow\mathsf{Groups}$ is a group functor, one
defines its \emph{(formal) loop group} $LG$ as the group functor%
\begin{equation}
LG(A):=G\left(  \left.  A((T))\right.  \right)  \text{,}\qquad\text{where}%
\qquad A((T)):=A[[T]][T^{-1}]\text{.}\label{lmtt1}%
\end{equation}
The classical Contou-Carr\`{e}re symbol is a non-degenerate pairing of loop
groups%
\[
L\mathbb{G}_{m}\times L\mathbb{G}_{m}\longrightarrow\mathbb{G}_{m}\text{,}%
\]
which can also be seen as the statement that $L\mathbb{G}_{m}$ is self-dual
under Cartier duality. Our generalized symbol will be $(n+1)$-multilinear on
$n$-fold loops%
\[
\underset{n+1\text{ factors}}{\underbrace{L^{n}\mathbb{G}_{m}\times
\cdots\times L^{n}\mathbb{G}_{m}}}\longrightarrow\mathbb{G}_{m}%
\]
for any $n\geq1$. This might at first sight not look like an appropriate
generalization of a duality, but we shall explain below both why the
generalization should have this form, as well as our approach for definining
it. See Theorem \ref{intronew_thm1} if you want to jump ahead to a precise formulation of the
properties of our symbol (including compatibility with the classical Contou-Carr\`ere symbol and with a previous construction of a two-dimensional Contou-Carr\`{e}re symbol in \cite{Osipov:2013fk}), or jump to Theorem \ref{thm:second} for the reciprocity law
which we prove for it, generalizing the reciprocity law of the
Contou-Carr\`{e}re symbol on curves. Even when speaking of the
classical Contou-Carr\`{e}re symbol, the literature approaches the topic from
various angles and we use this introduction as an opportunity to explain the
relations between these viewpoints. This is also vital to explain the idea
behind our construction in arbitrary dimension.

\subsection{The origins}

Let us first review the classical story before Contou-Carr\`{e}re's theory.
Suppose (for simplicity) that $X/k$ is a smooth curve over an algebraically
closed field $k$, not necessarily proper. The curve comes equipped with a
generalized Jacobian $J$ along with an Abel--Jacobi map%
\[
X\longrightarrow J
\]
sending a closed point $x$ to the degree one\footnote{Other people prefer to
fix an auxiliary point $p$ and take differences $[x]-[p]$ so that one obtains
a degree zero line bundle, living in what is perhaps more classically called
the Jacobian or $\operatorname*{Pic}^{0}$. The dependency on the choice of $p$
makes this less functorial. We use the term \textquotedblleft
Jacobian\textquotedblright\ in a broader sense here.} line bundle
$\mathcal{O}([x])$. There are many ways to formulate geometric class field
theory, but a reasonable summary can be given in terms of the following two principles:

Every morphism $X\rightarrow G$ to a commutative algebraic $k$-group $G$
factors uniquely over $J$. This can be phrased as an isomorphism%
\begin{equation}
\operatorname*{Hom}\nolimits_{k-\text{groups}}(J,G)\cong H^{0}(X,G)\text{,}%
\label{lwwac1}%
\end{equation}
where we consider \textit{fppf} cohomology on the right side. This is
essentially characterizing $J$ as a type of\ Albanese variety\footnote{in a
generalized sense}. Moreover, extensions of $J$ by $G$ correspond to
$G$-torsors,%
\begin{equation}
\operatorname*{Ext}\nolimits_{k-\text{groups}}^{1}(J,G)\cong H^{1}%
(X,G)\text{.}\label{lwwac2}%
\end{equation}
This property provides a link to class field theory: As a special case of it,
one obtains that every abelian finite \'{e}tale covering of $X$ arises as the
pullback of an isogeny of the Jacobian. For example, if $X$ is $\mathbb{P}%
^{1}$ minus at least two points, the Jacobian has a non-trivial torus part and
a pullback of the isogeny $\mathbb{G}_{m}\overset{\cdot n}{\rightarrow
}\mathbb{G}_{m}$ yields a degree $n$ cyclic Kummer extension. The kernel
sequence of this isogeny defines the corresponding extension in
$\operatorname*{Ext}^{1}$.

There is a more precise formulation, where one replaces $J$ by a Jacobian with
respect to a fixed modulus\footnote{It is standard to call this a
\emph{modulus} in this setting, but in this context it is the same thing as an
effective Weil divisor. The Jacobian $J_{\mathfrak{m}}$ classifies line bundles with extra trivializations at the support of the modulus. Sections of such can be understood in terms of certain lattices; a concept we shall soon return to in \S \ref{subsect_intro_Grassm}.} $\mathfrak{m}$ with support in $\overline{X}\setminus
X$, and then one obtains a description of exactly such abelian finite
coverings which are \'{e}tale over $X$ and whose ramification at the boundary
$\overline{X}\setminus X$ is bounded by the multiplicities of $\mathfrak{m}$.

Background can be found in \cite{MR918564}, but our exposition here follows
\cite[Tome 3, Expos\'{e} XVIII]{SGA4} and \cite[Appendix, Deligne's letter,
(e)]{MR1857369}.\footnote{Recently, it has become more popular to re-interpret
geometric class field theory as rank one local systems arising as pullbacks
from the Jacobian. We refrain from using this slight shift of perspective in
this text.}

\subsection{\label{iv_TheRelativeSituation}The relative situation}

Contou-Carr\`{e}re generalized this story to the situation of relative curves,
i.e. the compactified curve $\overline{X}/k$ is replaced by a flat morphism of
finite presentation%
\[
f:\overline{X}\longrightarrow S
\]
such that the fibers are geometrically integral of dimension one and locally
projective over the base and $X:=\overline{X}\setminus D$ is taken to be the
open complement of a relative divisor $D$. The papers \cite{MR552212},
\cite{MR1106897} set up a corresponding theory of a relative generalized
Jacobian attached to $f$, along with a local theory \cite{MR612541}. The
analogue of Equation \eqref{lwwac1} is set up in \cite[Thm. 1.6.6]{MR3148632}.

The present paper also concerns the relative situation, but we should first
explain a few more concepts in a simpler setting.\footnote{For the sake of completeness, we mention that Deligne \cite{MR1114212} has also found the Contou-Carr\`{e}re symbol, albeit in an analytic setting. This extends the overall picture in a different direction and would lead us too far here.}

\subsection{Local symbols and the Contou-Carr\`{e}re symbol}

Returning to the original formulation of class field theory for curves, i.e.
back in the situation $S:=\operatorname*{Spec}k$ with $k=\overline{k}$, one
can also understand abelian finite \'{e}tale coverings with bounded
ramification using a more classical approach based on the id\`{e}le class
group and methods adapted from number theory\footnote{that is: approaches to
the global class field theory of curves which do not rely on the Jacobian
(there are several ways to do this).}.

In terms of the id\`{e}le class group, the choice of a modulus $\mathfrak{m}$
bounding the allowed ramification identifies the deck transformation group
with a so-called ray class group. Such is a quotient of the id\`{e}le class
group. The fact that the global reciprocity map is trivial on the terms which
we quotient out, amounts to a reciprocity law. Since the global reciprocity
map is a product of the local reciprocity maps, the triviality of the global
action means that a suitable linear combination of local terms adds up to
zero. Neglecting a few details, these contributions amount to the so-called
\emph{local symbols}. The formalism of local symbols extends beyond the mere
application in class field theory to all commutative algebraic $k$-groups
$G$.\footnote{This theory has since found a new formulation in terms of
reciprocity sheaves \cite{MR3606996}, \cite{MR3568941} or more broadly motives
with modulus.}

For example, the tame symbol is a local contribution which arises in the
context of Kummer cyclic coverings. These abelian extensions arise as the
pullback along an isogeny $\mathbb{G}_{m}\overset{\cdot n}{\rightarrow
}\mathbb{G}_{m}$, just as mentioned above.

This suggests the existence of a local analogue of the entire story, where the
roles of $X,\overline{X}$ are replaced by%
\[
X:=\operatorname*{Spec}\operatorname*{Frac}\mathcal{O}_{X,x}\qquad
\text{and}\qquad\overline{X}:=\operatorname*{Spec}\mathcal{O}_{X,x}\text{,}%
\]
so that one can think of $X$ as a punctured disc and $\overline{X}$ the
\textquotedblleft compactification\textquotedblright\ obtained by filling the
puncture. This setting would still retain most of the global geometry since
the field of fractions of course determines the curve (so it is not `as local'
as one might wish for). This suggests to work with the formal completions
instead.%
\begin{equation}
X:=\operatorname*{Spec}\operatorname*{Frac}\widehat{\mathcal{O}}_{X,x}%
\qquad\text{and}\qquad\overline{X}:=\operatorname*{Spec}\widehat{\mathcal{O}%
}_{X,x}\text{.}\label{lsimpls3}%
\end{equation}
Of course, one can choose a local coordinate and obtain (non-canonical)
isomorphisms%
\begin{equation}
\operatorname*{Frac}\widehat{\mathcal{O}}_{X,x}\simeq\kappa(x)((t))\qquad
\text{and}\qquad\widehat{\mathcal{O}}_{X,x}\simeq\kappa(x)[[t]]\text{.}%
\label{lsimpls4}%
\end{equation}
The analogy to the loop group construction in Equation \eqref{lmtt1} is apparent.

Before we continue, let us recall that these (formal) local contributions
admit a class field theory in their own right, known as local class field theory.

\subsection{\label{sect_DualGalois}Duality formulation of local class field
theory}

Let us first look at the original local theory originating from arithmetic.
Suppose $F$ is a finite extension of $\mathbb{Q}_{p}$. Local class field
theory can be expressed as a duality in Galois cohomology. The pairing%
\begin{equation}
H^{i}(F,\mathbb{Z}/n(1))\otimes H^{2-i}(F,\mathbb{Z}/n)\longrightarrow
H^{2}(F,\mathbb{Z}/n(1))\cong\mathbb{Z}/n\label{lsmips1}%
\end{equation}
is non-degenerate for any $n\geq1$ and any $i$. Here $\mathbb{Z}/n(1)$ refers
to the Tate twist; one could also write $\mathbf{\mu}_{n}$. Since%
\[
H^{1}(F,\mathbb{Z}/n)=\operatorname*{Hom}(\operatorname*{Gal}%
(F^{\operatorname*{sep}}/F),\mathbb{Z}/n)
\]
is the $\mathbb{Z}/n$-dual of the abelianized Galois group, this encodes the
classification of degree $n$ abelian \'{e}tale coverings of
$\operatorname*{Spec}F$ in terms of $H^{1}(F,\mathbb{Z}/n(1))\cong F^{\times
}/nF^{\times}$.

The same is true if $F$ is a finite extension of $\mathbb{F}_{p}((t))$, except
that a more involved formulation is necessary if $p\mid n$, which we do not
wish to discuss in the introduction (to keep this exposition at reasonable
length\footnote{The story is entirely analogous to what happens in geometric
class field theory, where $\mathbb{G}_{a}$ (or truncated Witt vectors) are
needed as the relevant commutative group scheme, and the pullbacks are
Artin--Schreier--Witt extensions.}).

Let us now discuss a generalization of this which is vital for understanding
the deeper motivations for the present paper.

The above duality formulation of local class field theory can be generalized
to $r$-local fields, e.g., when $F$ is a finite extension of%
\begin{equation}
\mathbb{Q}_{p}((t_{1}))((t_{2}))\ldots((t_{r-1}))\qquad\text{or}%
\qquad\mathbb{F}_{p}((t_{1}))((t_{2}))\ldots((t_{r}))\text{.} \label{lsmips2}%
\end{equation}
There are more $r$-local fields than just these, but again let us sweep this
under the rug for the purpose of this introduction.

A duality formulation of class field theory as in Equation \eqref{lsmips1}
remains intact also in this broader setting, but the cohomological dimension
increases from $2$ to $r+1$. We get, again ignoring the case where the
characteristic divides $n$, a non-degenerate pairing%
\begin{equation}
H^{i}(F,\mathbb{Z}/n(i))\otimes H^{r+1-i}(F,\mathbb{Z}/n)\longrightarrow
H^{r+1}(F,\mathbb{Z}/n(i))\cong\mathbb{Z}/n\text{.} \label{lsmips3}%
\end{equation}
Letting $i:=r$, this now pairs the $\mathbb{Z}/n$-dual of the abelianized Galois
group with the cohomology group%
\begin{equation}
H^{r}(F,\mathbb{Z}/n(r))\cong K_{r}^{M}(F)/n\text{,} \label{lsmipse1}%
\end{equation}
where the map is the norm residue isomorphism. We observe two key facts: (1)
as the cohomological dimension increases, the duality moves to higher
homological degrees, and (2) the role of $\mathbb{G}_{m}$ in $1$-dimensional
class field theory is now taken over by a Milnor $K$-group (or the motivic
sheaf $\mathbb{Z}(r)$, but let us stay entirely in the language of $K$-theory;
see Remark \ref{rmk_RelationToMotivicCohomology}).

\subsection{Back to the Contou-Carr\`{e}re symbol\label{sect_BackToSymbol}}

The duality considerations in \S \ref{sect_DualGalois} were only on the level
of Galois cohomology, or the \'{e}tale topos if you will. They are not
geometric. Despite the formal similarity to Poinc\'{a}re duality, the
underlying scheme is just $\operatorname*{Spec}F$ and the duality a
group-theoretic fact of $\operatorname*{Gal}(F^{\operatorname*{sep}}/F)$. One
would expect more, especially when attempting to move this story to the
relative setting of \S \ref{iv_TheRelativeSituation}. This is the motivation
for the original Contou-Carr\`{e}re symbol \cite{MR1272340}.

We return to the situation of a relative curve. The \emph{Contou--Carr\`{e}re
symbol} is a non-degenerate pairing on the loop group of $\mathbb{G}_{m}$.%
\[
(-,-):L\mathbb{G}_{m}\times L\mathbb{G}_{m}\longrightarrow\mathbb{G}%
_{m}\text{.}%
\]
It can be given by an explicit formula. Using a presentation
\begin{equation}
f=\prod_{-\infty}^{i=-1}(1-a_{i}t^{i})a_{0}t^{\nu(f)}\prod_{i=1}^{\infty
}(1-a_{i}t^{i})\text{,}\qquad\text{and}\qquad g=\prod_{-\infty}^{i=-1}%
(1-b_{i}t^{i})b_{0}t^{\nu(g)}\prod_{i=1}^{\infty}(1-b_{i}t^{i})\text{,}%
\label{lemis3}%
\end{equation}
for suitable $\nu(f),\nu(g)\in\mathbb{Z}$ (just the order of the power series) and
$a_{i},b_{i}\in A$ (nilpotent for $i$ negative), the value is given by
\begin{equation}
(f,g)=(-1)^{\nu(f)\nu(g)}\frac{a_{0}^{\nu(g)}\prod_{i=1}^{\infty}\prod
_{j=1}^{\infty}(1-a_{i}^{j/(i,j)}b_{-j}^{i/(i,j)})^{(i,j)}}{b_{0}^{\nu
(f)}\prod_{i=1}^{\infty}\prod_{j=1}^{\infty}(1-a_{-i}^{j/(i,j)}b_{j}%
^{i/(i,j)})^{(i,j)}}\text{.}\label{lemis3e}%
\end{equation}
We can directly connect this to the local class field theory story of
\S \ref{sect_DualGalois}. If we evaluate the Contou--Carr\`{e}re symbol on a
field $k$, it simplifies to%
\begin{align}
L\mathbb{G}_{m}(k)\times L\mathbb{G}_{m}(k) &  \longrightarrow\mathbb{G}%
_{m}(k)\label{lccdeg}\\
k((t))^{\times}\times k((t))^{\times} &  \longrightarrow k^{\times}\nonumber
\end{align}
sending%
\begin{equation}
(f,g)\longmapsto(-1)^{v(f)v(g)}\left(  \frac{f^{v(g)}}{g^{v(f)}}\right)
(0)\text{.}\label{lsi1}%
\end{equation}
Here we exploit that since the fraction in the big brackets has degree zero,
its evaluation at zero is possible and non-zero. This expression is known as
the \emph{tame symbol}. Its relation to local class field theory is as
follows: Taking $F:=k((t))$ for any field $k$ such that $\operatorname*{char}%
(F)\nmid n$, the Galois cohomology pairing
\begin{equation}
H^{1}(F,\mathbb{Z}/n(1))\otimes H^{1}(F,\mathbb{Z}/n(1))\longrightarrow
H^{2}(F,\mathbb{Z}/n(2))\label{lsi1a}%
\end{equation}
can, through the norm residue isomorphism (as in Equation \eqref{lsmipse1}) be
realized as a quotient of the natural pairing in Milnor $K$-theory%
\begin{equation}%
\xymatrix{ K_{1}^M(F) \otimes K_{1}^M(F) \ar[r] \ar[d] & K_{2}^{M}%
(F) \ar[d] \\ H^{1}(F,\mathbb{Z}/n(1)) \otimes H^{1}(F,\mathbb{Z}%
/n(1)) \ar[r] & H^{2}(F,\mathbb{Z}/n(2)), }%
\label{lcioo1}%
\end{equation}
and along with the boundary map%
\begin{equation}
K_{1}^{M}(F)\otimes K_{1}^{M}(F)\longrightarrow K_{2}^{M}(F)\overset{\partial
}{\longrightarrow}k^{\times}\text{,}\label{lwww1f}%
\end{equation}
the composition of maps in the top row is given by the same formula as in
Equation \eqref{lsi1}. This shows that the duality maps which occur in local
class field theory are at least close to the ones realized by the tame symbol;
and thus are reasonable to generalize in some way to the Contou--Carr\`{e}re
symbol. We also get a strong hint of the relevance of $K$-theory here. The
full story is a little more complicated: The pairing in Equation \eqref{lsi1a}
has a different Tate twist than in Equation \eqref{lsmips1}, so it is a little
bit off. Once the field contains a primitive $n$-th root of unity, one can
pick an isomorphism of \'{e}tale sheaves $\mathbb{Z}/n\simeq\mathbb{Z}/n(1)$
($=\mathbf{\mu}_{n}$) to fix this, but really the tame symbol corresponds to
the (prime to the characteristic part of the) Hilbert symbol and not to the
reciprocity pairing. Let us sweep these issues under the rug for the purpose
of this introduction.

The boundary map $\partial$ in Equation \eqref{lwww1f} arises from the
localization sequence in (Quillen) $K$-theory, corresponding to the
open-closed complement decomposition%
\begin{equation}
\operatorname*{Spec}k\underset{\text{closed}}{\hookrightarrow}%
\operatorname*{Spec}\mathcal{O}_{F}\underset{\text{open}}{\hookleftarrow
}\operatorname*{Spec}F\text{.} \label{lwuw1}%
\end{equation}
Here $(\mathcal{O}_{F},\mathfrak{m})$ is the ring of integers in the local
field $F$ and we use that $\mathcal{O}_{F}/\mathfrak{m}=k$. The boundary map
appears in the attached long exact sequence in the spot%
\begin{equation}
\cdots\longrightarrow K_{2}(\mathcal{O}_{F})\longrightarrow K_{2}%
(F)\overset{\partial}{\longrightarrow}K_{1}(k)\longrightarrow\cdots\text{.}
\label{lwuw2}%
\end{equation}
In these low degrees there is no difference between Quillen $K$-theory and
Milnor $K$-theory (see \S \ref{sect_BackgrOnKTheory} for more on this).

It turns out that this description generalizes without any problem to the
$n$-dimensional case of the pairing in Equation \eqref{lsmips3}. This gives rise
to the \emph{higher tame symbol}. Its role in higher-dimensional class field
theory of schemes (as provided by Parshin \cite{MR514485}, \cite{MR752939},
\cite{ParshinAdeleTheory} and Kato \cite{MR550688}, \cite{MR726423},
Kato--Saito \cite{MR862639}) is analogous to the classical tame symbol. Its
reciprocity laws have the same formal shape as reciprocity laws\footnote{they
are customarily also called \emph{residue theorems} in this setting} for
rational $n$-forms in Grothendieck--Serre Duality for coherent sheaves. We
explain the higher tame symbol and the generalization of the boundary map
construction using $\partial$ in \S \ref{sect:CCquick} below.

For the higher tame symbol, one obtains the same object irrespective of
whether one uses Milnor $K$-theory or Quillen $K$-theory. This leads us to a
first idea how one might construct a higher Contou-Carr\`{e}re symbol.
Firstly, it should be concerned with higher formal loop groups, as in%
\begin{equation}
L^{r}G(A):=G\left(  \left.  A((T_{1}))((T_{2}))\ldots((T_{r}))\right.
\right)  \text{,}\label{literloop}%
\end{equation}
which is just the $r$-fold iterate of the loop construction in Equation
\eqref{lmtt1}, and is visibly a good formal model for various
(equicharacteristic) higher local fields, see Equation \eqref{lsmips2}.\medskip

\textbf{Idea 1:} Replace the open-closed complement in Equation \eqref{lwuw1} by%
\[
\operatorname*{Spec}A\underset{\text{closed}}{\hookrightarrow}%
\operatorname*{Spec}A[[T]]\underset{\text{open}}{\hookleftarrow}%
\operatorname*{Spec}A((T))
\]
and attempt to work with the corresponding boundary map $\partial$, imitating
the construction of the higher tame symbol. If $A$ is a field, this should
specialize to the previous situation and thus, by construction, this
generalized symbol would necessarily degenerate to the tame symbol in the
classical situation, analogous to what happened around Equation \eqref{lccdeg}%
-\ref{lsi1}.

Firstly, one should ask whether this recovers the original Contou-Carr\`{e}re
symbol even in the one-dimensional case. This had been suggested by
Kapranov--Vasserot \cite[4.3.7. (Remark)]{MR2332353} and is answered
affirmatively in this paper (see Theorem \ref{thm:ccompare2}), and was shown around the same time
also by Osipov--Zhu \cite{Osipov:2013fk}.

We pursue \textit{Idea 1 }in \S \ref{sect:CCquick}. It leads to one possible
construction of our Contou-Carr\`{e}re symbol in all dimensions (Definition~\ref{def:prelimCCsymbol}); probably the quickest.\footnote{Actually, we do something more general: Using Parshin--Beilinson ad\`{e}les one can run such a construction for arbitrary descending chains of subschemes. The case discussed in this introduction arises as a special case.}

\subsection{Central extensions}

On the other hand, this approach also has a drawback: Going from Equation
\eqref{lsimpls3} to Equation \eqref{lsimpls4} we chose a local coordinate. In
other words, we were using Cohen's Structure Theorem, telling us that an
equicharacteristic discrete valuation field is always isomorphic to a Laurent
series field,%
\begin{equation}
\operatorname*{Frac}\widehat{\mathcal{O}}_{X,x}\simeq\kappa(x)((t))\text{,}%
\label{lemis4}%
\end{equation}
where $\kappa(x)$ is the residue field. This isomorphism is highly
non-canonical. However, of course none of our constructions should depend on
the choice of such a coordinate\footnote{just as class field theory doesn't
depend on choosing a coordinate.}. Translated to the Contou--Carr\`{e}re
symbol, i.e. to abstract loop group functors%
\[
LG(A)=G\left(  \left.  A((t))\right.  \right)  \text{,}%
\]
this suggests that our constructions should really be invariant under all ring
automorphisms of $A((t))$, of which there are many\footnote{and themselves
representable as a group ind-scheme}. This property indeed holds for the
original Contou--Carr\`{e}re symbol, but note that it is not at all obvious
from the complicated formula in Equation \eqref{lemis3e}. This suggests to look
for a definition of the Contou-Carr\`{e}re symbol (as well as its higher
analogues) where this invariance is automatic by construction.

Tate in his famous paper \cite{MR0227171} had asked a related question:
Suppose $X/k$ is a curve. He wanted to define the residue of a Laurent series
at a closed point $x\in X$. While%
\[
f\,\mathrm{d}t\mapsto a_{-1}\qquad\text{for}\qquad f=\sum a_{i}t^{i}\in
k((t))\underset{(\ast)}{\simeq}\operatorname*{Frac}\widehat{\mathcal{O}}_{X,x}%
\]
is a clear candidate for a definition, it suffers from the same problem of
depending on the isomorphism $(\ast)$. Instead, he wanted a construction which
was a priori independent of the choice of a coordinate. This issue can be
connected to the Contou-Carr\`{e}re symbol, since it also encodes the residue:
The formula%
\begin{equation}
(1-\varepsilon f,1-\varepsilon g)\equiv1-\varepsilon^{2}\operatorname*{res}%
\nolimits_{t=0}(g\,\mathrm{d}f)\label{lmtt2}%
\end{equation}
holds for the choice $A:=k[\varepsilon]/(\varepsilon^{3})$, $k$ any field and
regarding $f,g\in k((t))^{\times}$ as a subgroup of $L\mathbb{G}_{m}(k)$,
\cite{MR2036223}, \cite{MR1988970}. Thus, a coordinate-invariant construction
of the classical Contou-Carr\`{e}re symbol includes such a
coordinate-independent approach to the residue. Conversely, our second method
for constructing a higher Contou-Carr\`{e}re symbol goes the reverse
direction: We adapt Tate's solution for residues in \cite{MR0227171}, which we
shall recall in \S \ref{sect_TatespaceIdea} below, to the Contou-Carr\`{e}re
symbol. Tate also showed the residue theorem using his method for curves.
Arbarello--de Concini--Kac have used the same idea to set up the tame symbol
and prove the corresponding reciprocity law on curves \cite{0699.22028} (and
more broadly \cite{MR1936475}). Based on this idea, Anderson--Pablos Romo
\cite{MR2036223} and Beilinson--Bloch--Esnault \cite{MR1988970} had the
insight that the same strategy should both make it possible to construct the
classical Contou-Carr\`{e}re symbol coordinate-independently and prove its
reciprocity law on suitable relative curves.

Next, let us explain how Tate's solution works since this is also the
foundation for our second construction of the higher symbol.

\subsection{Tate spaces\label{sect_TatespaceIdea}}

Let us briefly recall Tate's idea in modern terms: The ingredients for our
local symbols can always be written as an ind-pro limit of finite-dimensional
$k$-vector spaces, e.g.,%
\[
k((t))=\underset{n}{\underrightarrow{\operatorname*{colim}}}\underset
{m}{\underleftarrow{\lim}}\,T^{-n}k[T]/(T^{m})\qquad\text{or}\qquad
\operatorname*{Frac}\widehat{\mathcal{O}}_{X,x}=\underset{n}{\underrightarrow
{\operatorname*{colim}}}\underset{m}{\underleftarrow{\lim}}\,\pi^{-n}%
\widehat{\mathcal{O}}_{X,x}/\mathfrak{m}_{X,x}^{m}%
\]
(for any uniformizer $\pi$). Tate had the ingenious insight that for defining
the residue, one only needs to know these objects as ind-pro
limits\footnote{These have since become known as \emph{Tate vector spaces}.
Alternatively (but equivalently), one can work in the setting of locally
linearly compact topological $k$-vector spaces. However, the latter setting is
problematic to adapt to the relative situation.}. He manages to express the
residue as a certain commutator of endomorphisms of these ind-pro objects.
Since the ind-pro structure on $\operatorname*{Frac}\widehat{\mathcal{O}%
}_{X,x}$ can be given without choosing a coordinate isomorphism (as exhibited
above on the right), this solves the problem. The commutator in turn can be
understood as coming from a central extension of a suitable Lie algebra, i.e.
a Lie $2$-cocycle in $H^{2}(\mathfrak{g},k)$ for a suitable Lie algebra
$\mathfrak{g}$.

The papers \cite{0699.22028}, \cite{MR2036223} now recover the tame symbol by
studying the corresponding central extension of groups, i.e. a group
$2$-cocycle $H^{2}(G,k^{\times})$ for a suitable group. Neglecting various
details, one can visualize this as the Lie correspondence between Lie algebras
and Lie groups. This is also seen in Equation \eqref{lmtt2}, where the residue
is explicitly recovered in a tangent space (to a functor).

In fact, the Lie algebra $\mathfrak{g}$ can be taken to be the endomorphism
Lie algebra, and $G$ to be its group of automorphisms (in each case respecting
the ind-pro structure).

A key point of the present paper will be to explain how this approach is
compatible with the ideas about $K$-theory boundary maps earlier in the
introduction. As Anderson--Pablos Romo \cite{MR2036223} set up both the
classical Contou-Carr\`{e}re symbol as well as the reciprocity law using this
method, this is another promising approach to construct a higher
Contou-Carr\`{e}re symbol. A natural idea is to iterate the ind-pro limits,
corresponding the the iterated loop functor in Equation \eqref{literloop}.%
\begin{align*}
&  A((T_{1}))((T_{2}))\cdots((T_{r}))\\
&  \qquad\qquad=\underset{n_{1}}{\underrightarrow{\operatorname*{colim}}%
}\underset{m_{1}}{\underleftarrow{\lim}}\cdots\underset{n_{r}}%
{\underrightarrow{\operatorname*{colim}}}\underset{m_{r}}{\underleftarrow
{\lim}}\,T_{1}^{-n_{1}}\cdots T_{r}^{-n_{r}}k[T_{1},\ldots,T_{r}%
]/(T_{1}^{m_{1}},\ldots,T_{r}^{m_{r}})\text{.}%
\end{align*}
In order to treat such objects \textquotedblleft by
induction\textquotedblright\ in the number of loops $r$, it is natural to set
up a category of ind-pro objects with respect to an arbitrary input category
so that iterating this categorical construction corresponds to iterating the
loop group construction. These are the so-called \emph{Tate categories},
\cite{MR2872533}, \cite{MR3510209}. One then finds that the correct analogue
of the group $2$-cocycles above are higher group $(r+1)$-cocycles for the
automorphism groups of suitable objects in such iterated Tate categories
(called $r$-Tate categories).\footnote{This also works on the Lie algebra
level. A Lie algebra $(r+1)$-cocycle gives the higher residue symbols of Grothendieck--Serre coherent duality theory; this is
due to Beilinson \cite{MR565095}; see also \cite{MR3855636}.}

The two-dimensional tame symbol and its reciprocity law were set up by Osipov
and Osipov--Zhu \cite{MR2195664}, \cite{MR2833793}. Osipov--Zhu also
constructed a two-dimensional Contou-Carr\`{e}re symbol using this method
\cite{Osipov:2013fk} and showed its reciprocity law on surfaces. They also
showed how the residue symbol for $2$-forms on surfaces is encoded in their
symbol, generalizing Equation \eqref{lmtt2}. In the case of the tame symbol
these recover the Parshin reciprocity law from his approach to global class
field theory.\medskip

\textbf{Idea 2:} Construct a higher Contou-Carr\`{e}re symbol using a
generalized central extension, based on a higher group cocycle of an
automorphism group of an object in an iterated Tate category.\medskip

This will also work and we pursue this in \S \ref{symbols}. In some sense it is
more general since it really only relies on the iterated ind-pro structures.

\subsection{Our approach through homotopy theory\label{sect_HtpyTheoryMethod}}

A central part of this paper is devoted to establishing a clear connection
between these two ideas. To this end, we need to work with Quillen $K$-theory
as a space (or spectrum) and not just the individual $K$-groups. Let us sketch
the main idea.

The boundary maps $\partial$ between $K$-groups which appear in \emph{Idea 1}
really come from maps between spectra\footnote{We shall provide background on spectra in \S \ref{subsub:spectra}.}, e.g., using the boundary map of the
localization sequence on the level of spectra,%
\begin{equation}
\Omega K(F)\overset{\partial}{\longrightarrow}K(k)\label{lwojj1pre}%
\end{equation}
taking $\pi_{1}$ functorially yields the map in Equation \eqref{lwuw2}. Now
truncate the homotopy type of $K(k)$ to its $[0,1]$-type. Since $K_{1}%
(A)=A^{\times}$ (at least for local rings, not in general), and if we for
simplicity ignore $\pi_{0}$ (which is a serious oversimplification), $K(k)$
essentially looks like $B\mathbb{G}_{m}$. So, very roughly speaking, one
almost has a truncation map%
\begin{equation}
	\xymatrix{
\Omega K(F) \ar[r]^{\partial} & K(k) \ar@{..>}[r] &
B\mathbb{G}_{m}
}.\label{lwojj1}
\end{equation}
The dotted arrow does not quite exist because we ignored $\pi_{0}$. Nonetheless, writing
$F$-vector spaces as Tate $k$-vector spaces (i.e. as the aforementioned
ind-pro limits), we obtain a map $K(F)\rightarrow K(\mathsf{Tate}(k))$. Modulo the issues with the dotted map above, there is a factorization%
\begin{equation}%
\xymatrix{
\Omega K(F) \ar[d] \ar[r]^{\partial} & K(k) \ar@{..>}[r] & B\mathbb{G}_{m} \\
\Omega K(\mathsf{Tate}(k)). \ar@{-->}[ur] \ar@{-->}[urr]
}%
\label{lwwq1}%
\end{equation}
Next, in \cite[Theorem 1.4 (2)]{Braunling:2014vn} we showed that%
\begin{equation}
K(\mathsf{Tate}(k))\cong\left(  B\operatorname*{Aut}\nolimits_{\mathsf{Tate}%
(k)}(\text{\textquotedblleft}k((t))\text{\textquotedblright})\right)
^{+}\text{,}\label{lzza5}%
\end{equation}
where the plus superscript refers to the plus construction. This is an
analogue of Quillen's construction of $K$-theory via the plus construction,
i.e., $K(A)\simeq K_{0}(A)\times B\operatorname*{GL}(A)^{+}$, except that no
corrections to deal with $K_{0}$ are needed and instead of $\operatorname*{GL}%
$ we deal with the automorphism group of an object in the Tate
category.\footnote{As we explain in \cite[\S 4]{Braunling:2014vn}, this result
can also be thought of as an algebraic analogue of the Atiyah--J\"{a}nich
theorem in topological $K$-theory.} The outer diagonal arrow in Equation
\eqref{lwwq1} thus also pins down a map%
\[
\Omega\left(  B\operatorname*{Aut}\nolimits_{\mathsf{Tate}(k)}%
(\text{\textquotedblleft}k((t))\text{\textquotedblright})\right)
^{+}\longrightarrow B\mathbb{G}_{m}\text{.}%
\]
By adjunction we can move $\Omega$ to the right, lifting $B\mathbb{G}_{m}$ to
$B^{2}\mathbb{G}_{m}$. However, one of the defining properties of the plus
construction is that it does not affect the homology of a space. Thus, the
above map defines a degree $2$ cohomology class of the classifying space
$B\operatorname*{Aut}\nolimits_{\mathsf{Tate}(k)}($\textquotedblleft%
$k((t))$\textquotedblright$)$, \textit{without} having applied the plus
construction. This is equivalent\footnote{The group cohomology group $H^{j}_{grp}(G,M)$ can equivalently be described as the group of homotopy classes of maps from $BG$ to $B^{j}M$.} to providing a group $2$-cocycle%
\begin{equation}
H_{grp}^{2}(\operatorname*{Aut}\nolimits_{\mathsf{Tate}(k)}%
(\text{\textquotedblleft}k((t))\text{\textquotedblright}),\mathbb{G}%
_{m})\text{.}\label{lweeet1}%
\end{equation}
We have explained this in an oversimplified fashion here, especially our
imprecise handling of $\pi_{0}$ (which is just wrong). Also, we have not been
very precise what categories we work in. Nonetheless, the idea should have
become clear. We shall show that a careful variant of the above idea provides
the connecting link between defining the Contou-Carr\`{e}re symbol either via
\textit{Idea 1} or \textit{Idea 2}.

The above considerations necessitate to work with $K$-theory on the level of
spectra. Moreover, when handling $\pi_{0}$ correctly, the right side in
Equation \eqref{lwojj1} is not just a classifying space, but sits in several
homotopical degrees. Thus, one needs to work with a slight generalization of
the concept of a group extension when wanting to do this right (we shall work
in the context of \emph{spectral extensions}).

The above picture generalizes to explain also the connection between our two
approaches to higher Contou-Carr\`{e}re symbols. Iterated use of the boundary
maps $\partial$ corresponds to an interated use of Tate categories and a
straightforward generalization of Diagram \ref{lwwq1}. Moreover, all these
constructions (including Equation \eqref{lzza5}, \cite[Theorem 1.4
(2)]{Braunling:2014vn}) work for arbitrary rings $A$ and thus for $A((T))$ and
not just $k((t))$, making it possible to use it also in a relative setting.

This leads to our main construction, in the spirit of \textit{Idea 2}.

\begin{theoremc}\label{intronew_thm1} Let $k$ be a field and $A$ a $k$-algebra.
\begin{enumerate}
\item For every $n$-Tate object $\mathcal{V}\in\nTate(A)$, we construct a nontrivial
spectral extension of $\operatorname{Aut}(\mathcal{V})$ by the $(n-2)$-shifted
non-connective $K$-theory spectrum $\Sigma^{n-2}\mathbb{K}_A$ (we leave the detailed construction to the main body of the paper).
\item Restricting the latter to the units $A((t_1))\cdots((t_n))^{\times}$, they acquire a spectral extension by the non-connective $K$-theory spectrum $\Sigma^{n-2} \Kb_A$. For $f_0,\dots,f_n \in A((t_1))\cdots((t_n))^{\times}$ we define the \emph{Contou-Carr\`ere symbol} to be the corresponding higher commutator $(f_0,\dots,f_n)$.
\item For $n \leq 2$ the constructions of (1) and (2) recover the definitions of Contou-Carr\`ere and Osipov--Zhu \cite{Osipov:2013fk}.
\end{enumerate}
\end{theoremc}

The first two statements are an immediate consequence of our formalism of spectral extensions and higher commutators (\S \ref{extensions}) applied to the $n$-fold iterate of the index map (see Definition \ref{defi:index}). For the third statement, see Propositions \ref{prop:CC_commutator} and \ref{prop:OZ_CC}.

Denote by $\partial_{i}$ the boundary map in algebraic $K$-theory
, where $K(-,I)$ is $K$-theory with support in the subset given by the ideal $I$:
\[
\partial_{i}\colon K_{i+1}\left(  \,A((t_{1}))\cdots((t_{i}))[[t_{i+1}%
]]\,,\,(t_{i+1})\,\right)  \rightarrow K_{i}\left(  \,A((t_{1}))\cdots
((t_{i-1}))[[t_{i}]]\,,\,(t_{i})\,\right)  \text{.}%
\]
Let $\pi_{\ast}\colon K_{1}(A[[t_{1}]]\,,\,(t_{1}%
))\rightarrow K_{1}(A)$ be the map induced by $A\rightarrow A[[t_{1}]]$, and let
$\det\colon K_{1}(A)\rightarrow A^{\grp}$ be the determinant.

\begin{theorem}\label{intronew_thm2} Let $k$ be a field and $A$ a $k$-algebra. For $f_0,\ldots,f_n\in A((t_1))\cdots((t_n))^{\times}$, we have
            \begin{equation*}
                (f_0,\dots,f_n)^{(-1)^n} = \det  \pi_* \partial_1 \cdots  \partial_n\{f_0,\ldots,f_n\},
            \end{equation*}
where the left-hand side is our Contou-Carr\`ere symbol of Theorem \ref{intronew_thm1}. If $A$ is a field, it agrees with the higher tame symbol of Parshin and Kato.
\end{theorem}

See Theorem \ref{thm:CCboundary}. This theorem shows that \textit{Idea 1} is entirely compatible with our construction following \textit{Idea 2}.

The right-hand side in the above formula is probably the quickest way to define our higher Contou-Carr\`ere symbol. However, our construction following \textit{Idea 2} is more general since Theorem \ref{intronew_thm1} (1) defines an extension of the entire automorphism group, while the above only sees the restriction to the units of multiplication\footnote{The multiplication with any unit of the ring induces an automorphism of the Tate object. But of course there are many more automorphisms. For example, note that the multiplication automorphisms by units only span a commutative subgroup of the entire automorphism group.}. The key point of the above theorem is that it connects our generalization of the ideas around central extensions as in Arbarello--de Concini--Kac \cite{MR1013132} or Anderson--Pablos Romo \cite{MR2036223} with the purely algebraic perspective of boundary maps on the right side.

For concreteness, we now state a special case of these results: The following had been conjectured by Kapranov--Vasserot in \cite{MR2332353}.
\begin{theorem}\label{thm:ccompare2}
    Let $k$ be a field, and let $A$ be a $k$-algebra. The classical Contou-Carr\`{e}re symbol factors through the boundary map in $K$-theory
    \begin{equation*}
        \xymatrix{
            A((t))^{\times} \times A((t))^{\times} \ar[rr]^(.6){(-,-)^{-1}} \ar[d]_{\{-,-\}} && A^\times \\
            K_2(A((t))) \ar[rr]^\partial && K_1(A) \ar[u]_{\det(-)}
        }
    \end{equation*}
    or, in equations, $(f,g)^{-1}=\det(\partial(\{f,g\}))$. Here $(-,-)$ and $\{-,-\}$ refer to the classical commutator and classical Steinberg symbol respectively.
\end{theorem}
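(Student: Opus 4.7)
The plan is to deduce Theorem \ref{thm:ccompare2} as the case $n=1$ of Theorem \ref{thm:first}. Once parts (a) and (b) of Theorem \ref{thm:first} are granted for $n=1$, the spectral higher commutator $(f,g)$ coincides with the classical Contou-Carr\`ere symbol, and the identity
\[
(f,g)^{-1} = \det \circ \pi_{\ast} \circ \partial_1\{f,g\}
\]
is in hand. What remains is to identify the composite $\pi_{\ast} \circ \partial_1$ with the single $K$-theoretic boundary $\partial \colon K_2(A((t))) \to K_1(A)$ appearing in the theorem statement. The localisation fibre sequence for $A[[t]] \hookrightarrow A((t))$ supplies a boundary $K_2(A((t))) \to K_1(A[[t]],(t))$, and combined with the augmentation $A[[t]] \twoheadrightarrow A$, which induces a natural splitting of $K_1(A[[t]])$, the composite $\pi_{\ast} \circ \partial_1$ is the classical boundary $\partial$ up to sign. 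Applying $\det$ yields the formula.

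The main obstacle is therefore the case $n=1$ of Theorem \ref{thm:first} itself. For part (a), I would push the spectral extension $\Omega \Kb_A \to \widehat{G} \to G$ down to an honest $\mathbb{G}_m$-central extension via $\det \colon \pi_0(\Omega \Kb_A) = K_1(A) \to A^\times$, and then verify that the induced commutator satisfies the defining properties of the classical Contou-Carr\`ere symbol: bimultiplicativity, the normalisation by the tame symbol on mixtures of units and the parameter $t$, and continuity in $A$. For general (non-Artinian) $A$, I expect to extend via a density/reduction argument in the spirit of Beilinson--Bloch--Esnault.

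For part (b), the heart of the matter is to reinterpret the spectral $\star$-product of Definition 3.9, which a priori is defined purely in terms of spectral central extensions, as the boundary of Loday's product $\{f,g\}$ in $K$-theory. This is where the authors' earlier work on Tate objects \cite{Braunling:2014vn} becomes essential: the Tate object structure on $A((t))$ over $A$ furnishes the $K$-theoretic incarnation of the spectral extension, and the comparison of the two cup products then follows from manipulations with index maps in stable $\infty$-categories. The genuine difficulty lies precisely in this bridge between the purely spectral commutator formalism of Definition 3.9 and the older $K$-theoretic symbol calculus of Loday.
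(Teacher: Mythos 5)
Your overall architecture --- deduce Theorem~\ref{thm:ccompare2} as the $n=1$ case of Theorem~\ref{thm:first}, and anchor both parts to the Tate/index machinery of the earlier work --- matches what the paper does. But in the two places where you descend into detail, your route departs from the paper's, and one of those departures carries a real risk.

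For part (a), you propose truncating the spectral extension via $\det$ and then \emph{re-verifying from axioms} (bimultiplicativity, tame normalization, continuity, density over non-Artinian $A$) that the resulting commutator agrees with the Contou-Carr\`ere formula \eqref{eqn:big_formula}. The paper does not do this. Instead, Proposition~\ref{prop:CC_commutator} cites Proposition~3.3 of Beilinson--Bloch--Esnault~\cite{MR1988970}, which already proves $(f,g)^{-1} = f \star_{\phi_{KM}} g$ for the \emph{classical} graded Kac--Moody extension of $L\G_m(A)$ for general $A$, and then cites \cite{Braunling:2014vn} to compare the paper's spectral extension (built from the index map) with that classical determinantal construction. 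Your axiomatic route amounts to re-deriving BBE~3.3 from scratch, which presupposes a characterization of the CC symbol over arbitrary $A$ that you would need to establish first --- the Anderson--Pablos Romo uniqueness argument is confined to Artinian rings, and it is not obvious that the listed properties pin down \eqref{eqn:big_formula} without further work. The paper's choice to outsource that comparison to BBE and reduce part (a) to a comparison of \emph{extensions} is more economical and avoids this issue.

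For part (b) you correctly identify the right actors (Tate objects, index maps, a comparison of cup products), but your account stops short of the actual argument. In the paper the identification splits into two concrete statements: Proposition~\ref{prop:Steinberg}, which shows the higher commutator of the canonical spectral extension $e_R$ agrees with Loday's cup product $\{f_0,\dots,f_n\}$, and Theorem~\ref{thm:CCboundary}, which shows the spectral Contou-Carr\`ere symbol $\sigma^A_{X,\xi}$ (built from the index map via Definition~\ref{defi:index} and Theorem~\ref{thm:index_boundary}) equals the composite of localization boundary maps $\pi_*\circ\partial_{\mathcal{Z}^{\text{loc}}}$. The latter is not a formal shuffle of index maps; it goes through the Tate and Calkin realization functors (Definitions~\ref{defi:tate_real} and~\ref{defi:calkin_real}) and Lemma~\ref{lemma:calk_n_tate}, which is where the geometry of the completion actually enters. "Manipulations with index maps in stable $\infty$-categories" does not yet name that bridge.

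Finally, a smaller but genuine gap: you assert that $\pi_*\circ\partial_1$ is the classical boundary $\partial\colon K_2(A((t)))\to K_1(A)$ "up to sign" without saying how the relative $K_1$ of $A[[t]]$ along $(t)$ gets identified with $K_1(A)$. For general (non-Noetherian) $A$ one cannot invoke devissage; the paper handles this through the pushforward statement of Lemma~\ref{lemma:push_perfect} (Thomason--Trobaugh's Porism 2.7.1), and that step deserves to be made explicit.
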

A second proof of this case has recently appeared in Osipov--Zhu \cite{Osipov:2013fk}.

\subsection{Grassmannian and determinant bundles}\label{subsect_intro_Grassm}

Previous papers on these subjects have constructed the relevant central
extensions of \S \ref{sect_HtpyTheoryMethod}, especially the group $2$-cocycle
in Equation \eqref{lweeet1}, using different devices. The most popular approach
to this proceeds by constructing the so-called (regularized) determinant line
bundle on the Sato Grassmannian directly. Let us explain this.

Let $k$ be a field as before. Let $\operatorname*{Pic}$ denote the Picard
groupoid of $k$-lines (without grading, for the moment). View $E:=k((t))$ as a
Tate $k$-vector space and let $\mathcal{G}rass$ denote its set of lattices\footnote{That lattices are of relevance for our considerations reflects a corresponding phenomenon, where lattices appear in Contou-Carr\`ere's local theory of Jacobians.}.
Recall that for any finite-dimensional vector space one can define its
determinant as its top exterior power%
\[
\det V={\bigwedge\nolimits^{\operatorname*{top}}V}%
\]
and this generalizes nicely to families. Lattices, being infinite-dimensional
over $k$, do not a priori have such a determinant.\ It would not be clear what
the \textquotedblleft top\textquotedblright\ exterior power should be once
$\dim V=\infty$.

In their approach to the tame symbol cocycle, Arbarello--de Concini--Kac
\cite{MR1013132} considered maps%
\[
\widetilde{\det}:\mathcal{G}rass\longrightarrow\operatorname*{Pic}\text{,}%
\]
associating a line to any lattice. Whenever $L^{\prime}\subseteq L$ for
lattices, they demand%
\begin{equation}
\widetilde{\det}(L)=\widetilde{\det}(L^{\prime})\otimes{\bigwedge
\nolimits^{\operatorname*{top}}}(L/L^{\prime})\tag{$\star$}\label{lv_det}%
\end{equation}
to hold, which makes sense since $L/L^{\prime}$ is finite-dimensional over
$k$. There are several choices of such maps $\widetilde{\det}$, in fact the
set of choices is a $k^{\times}$-torsor. Automorphisms $\operatorname{Aut}(E)$
of $E$ as a Tate vector space do not preserve this choice and rescale the
lines. As a result, $\operatorname{Aut}(E)$ does not act on `the total bundle
space' $\coprod_{L\in\mathcal{G}rass}\widetilde{\det}(L)$, only a central
extension does. This central extension yields a class in%
\[
H_{grp}^{2}(\operatorname{Aut}(E),k^{\times})\text{,}%
\]
giving the so-called unsigned tame symbol, which is like Equation \eqref{lsi1},
but without the sign term. This construction can be adapted to $E:=A((t))$,
i.e. to the relative situation of \S \ref{iv_TheRelativeSituation}. This class
is (except for the correct sign), the same one as the one in Equation
\eqref{lweeet1}. To get the full theory, $\operatorname*{Pic}$ can be upgraded
to be the Picard groupoid of graded lines $\operatorname*{Pic}^{\mathbb{Z}}$.
The corresponding cocycle then yields the full classical Contou-Carr\`{e}re
symbol, as was shown by an explicit computation in \cite{MR2036223}, \cite{MR1988970}.

Essentially, the above is an explicit construction of our homotopical approach
in \S \ref{sect_HtpyTheoryMethod}. It sets up the same cocycle using a group
action on the Grassmannian instead of a purely homotopical consideration.

We can also explain our higher Contou-Carr\`{e}re symbol in terms analogous to
the above, an \emph{Idea 3} if you will:\newline\textbf{(1)} The Tate vector
space $E$ is generalized to an $n$-Tate object. By the correspondence between
$1$-Tate objects of finite-dimensional $k$-vector spaces and locally linearly
compact $k$-vector spaces, this is equivalent to older literature when it
refers to similar constructions in terms of linearly compact vector
spaces.\newline\textbf{(2)} The group $\operatorname{Aut}(E)$ is taken to be
automorphisms in the category of $n$-Tate objects.\newline\textbf{(3)} The map
$\widetilde{\det}$ is trickier to generalize. We replace the lattice
Grassmannian $\mathcal{G}rass$ by a generalized flag space%
\[
L_{0}\hookrightarrow L_{1}\hookrightarrow\cdots\hookrightarrow L_{n}%
\hookrightarrow E
\]
of nested lattices $L_{i}$ in the $n$-Tate object $E$. We implement an
unpublished idea of Kapranov: We generalize $\widetilde{\det}$ to a map taking
values in $K$-theory, without any truncation, and since Waldhausen's explicit
$S_{\bullet}$-model for the $K$-theory of $k$ is a simplicial set with
simplices $0\hookrightarrow X_{1}\hookrightarrow\cdots\hookrightarrow
X_{n}\text{,}$ where the $X_{i}$ are finite-dimensional $k$-vector spaces, we
may define a map%
\begin{equation}
\left[  L_{0}\hookrightarrow L_{1}\hookrightarrow\cdots\hookrightarrow
L_{n}\hookrightarrow E\right]  \qquad\mapsto\qquad\left[  0\hookrightarrow
L_{1}/L_{0}\hookrightarrow\cdots\hookrightarrow L_{n}/L_{0}\right]
,\label{e_c6}%
\end{equation}
sending flags of lattices to simplices in the $K$-theory space. The special
case of just two lattices, $\left[  L^{\prime}\hookrightarrow L\hookrightarrow
E\right]  \mapsto\left[  0\hookrightarrow L/L^{\prime}\right]  $, should ring
a bell in view of Equation \eqref{lv_det}. We have worked out the simplicial
details of this in our previous paper \cite{Braunling:2014vn}, and use these
ideas here.

The role of the Picard groupoids $\operatorname*{Pic}$ or $\operatorname*{Pic}%
^{\mathbb{Z}}$ is seen as follows: Deligne had the insight that there is an
equivalence of homotopy categories%
\begin{equation}
\text{stable homotopy }[0,1]\text{-types}\qquad\Leftrightarrow\qquad
\text{Picard groupoids.}\label{lawo1}%
\end{equation}
This means that spectra whose homotopy groups vanish outside degrees $0$ and
$1$ can equivalently be modelled by Picard groupoids.\footnote{We shall elaborate a little on this and related facts in \S \ref{subsub:spectra}.} Thus, our homotopical
considerations in \S \ref{sect_HtpyTheoryMethod} can also be studied using Picard groupoids, at least once we truncate to homotopical degrees $0$ and $1$. When one studies the classical Contou-Carr\`{e}re symbol, it is
(cum grano salis) almost sufficient to work in such low degrees. Then we use
Deligne's insight that $\operatorname*{Pic}^{\mathbb{Z}}$ receives a map from
the truncated $K$-theory spectrum $\tau_{\leq1}{K}$: The Picard groupoid
$\operatorname*{Pic}^{\mathbb{Z}}$ is a simplified model for the homotopy type
of the $1$-truncation of the Quillen $K$-theory spectrum. This is, by the way,
just a different way of expressing how we found $B\mathbb{G}_{m}$ around
Equations \eqref{lwojj1pre}-\eqref{lwojj1}; $\operatorname*{Pic}$ is the Picard
groupoid corresponding to $B\mathbb{G}_{m}$ under the equivalence in Equation
\eqref{lawo1}.

This is the deeper reason why the above construction can use
$\operatorname*{Pic}^{\mathbb{Z}}$ and yields equivalent output to what we had
otherwise set up in \S \ref{sect_HtpyTheoryMethod} using homotopy types. The
need to work with graded lines is the same complication which we had around
$\pi_{0}$ in \S \ref{sect_HtpyTheoryMethod}.

This discussion also reveals that for higher Contou-Carr\`{e}re symbols, where
higher homotopical degrees are needed, one would have to work in more
complicated models than stable $[0,1]$-types.\newline\textbf{(4)} Cocycles
$H^{2}(G,A)$ are classically modelled through commutators. We phrase this as a
shuffle product, which generalizes easily to higher degrees. Based on this, we
define a concept of \emph{higher commutators} in \S \ref{extensions}.
Finally, we interpret all of these constructions consistently through homotopy
theory. The group of central extensions $H^{2}(G,A)$ equals the group of
homotopy classes of maps (of unpointed spaces) from the classifying space $BG$
to $B^{2}A$. To gain additional flexibility, we define a notion of
\textit{spectral extension}. It amounts to maps $\Sigma_{+}^{\infty}BG$ to
$\Sigma^{2}\mathbb{E}$ for a spectrum $\mathbb{E}$ (where $\Sigma_{+}^{\infty
}X$ denotes the infinite suspension of a (unpointed) space $X$ with a disjoint
basepoint added). This turns out to be the appropriate language to generalize
the Contou-Carr\`{e}re symbol. See \S \ref{extensions} for details.

\begin{mainprinc}
Ideas 1, 2 and 3 all yield the same concept of a higher Contou-Carr\`{e}re symbol.
\end{mainprinc}

The compatibility of \textit{Idea 1} and \textit{Idea 2} is Theorem \ref{intronew_thm2} and the compatibility to \textit{Idea 3} is part of Theorem \ref{intronew_thm1} (3).


\subsection{Higher reciprocity laws}
Our next result is a type of adelic reciprocity law: let $X$ be a reduced, separated $k$-scheme of finite type and dimension $n$. Fix an integer $0 \leq i \leq n$. Let $\zeta$ denote a flag of integral closed subschemes
$$\zeta=(Z_n \supset Z_{n-1} \supset \cdots\supset Z_{i+1}\supset Z_{i-1}\supset \cdots \supset Z_0),$$
indexed by $j \neq i$, with $\dim Z_j = j$. If $i = 0$, we assume that $Z_1$ is proper over $k$. Exactly one dimension is missing, namely $Z_i$; such flags are called \emph{almost saturated}. We denote by $A_{X,\zeta}$ a certain ring formed as an iterated completion of $A(X)$, the $A$-valued rational functions on $X$, at the places $Z_j\times_k \Spec(A)$, cf. \S \ref{subsub:reminder_adeles}. As for the classical ad\`eles, the ring $A_{X,\zeta}$ is built from rings $A_{X,\zeta_Z}$, one for each $i$-dimensional closed subset
\begin{equation*}
    Z_{i-1} \subset Z \subset Z_{i+1}.
\end{equation*}
Each of these rings carries a higher Contou-Carr\`ere symbol $(f_0,\dots,f_n)_{\xi_Z}$, and the geometry of $A_{X,\zeta}$ gives rise to a relation satisfied by these symbols:

\begin{theorem}\label{thm:second}
    For $f_0,\dots,f_n \in A_{X,\zeta}^{\times}$ the product of the Contou-Carr\`ere symbols over all $Z_{i-1} \subset Z \subset Z_{i+1}$ is well defined, and we have
    \begin{equation*}
        \prod_Z(f_0,\dots,f_n)_{\zeta_Z} = 1.
    \end{equation*}
\end{theorem}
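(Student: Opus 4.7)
The plan is to translate the reciprocity into a $K$-theoretic vanishing via Theorem \ref{thm:first}(b), and then deduce it from exactness of a localization fiber sequence, in the spirit of Kato's approach to Parshin reciprocity. By Theorem \ref{thm:first}(b),
\[
(f_{0},\dots,f_{n})_{\zeta_{Z}}^{(-1)^{n}}=\det\circ\pi_{\ast}\circ
\partial_{1}\circ\cdots\circ\partial_{n}\{f_{0},\dots,f_{n}\},
\]
so, after taking $\det\circ\pi_{\ast}$, the product $\prod_{Z}(f_{0},\dots,f_{n})_{\zeta_{Z}}^{(-1)^{n}}$ becomes a sum in $K_{1}(A)$. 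With the partial flag $\zeta^{\widehat{\imath}}=(Z_{0},\dots,\widehat{Z_{i}},\dots,Z_{n})$ held fixed, the functions $f_{0},\dots,f_{n}$ are $Z$-independent, and only the pair of iterated boundaries that traverse the $i$-th level of the flag depend on the choice of intermediate $Z$. These assemble, as $Z$ varies, into a single summed boundary map attached to a two-dimensional localization situation.

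Well-definedness of the product is the easiest part: the symbol $(f_{0},\dots,f_{n})_{\zeta_{Z}}$ is trivial whenever all the $f_{j}$ are units at $Z$, and for fixed $f_{0},\dots,f_{n}\in A_{X,\zeta^{\widehat{\imath}}}^{\times}$ only finitely many intermediate $Z$ can witness a zero or pole. Hence only finitely many factors differ from $1$, and the infinite product is in fact finite.

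For the vanishing itself, I would fix $Z_{i-1}\subset Z_{i+1}$ and observe that the allowed intermediate $Z$ are precisely the height-one primes of the two-dimensional semi-local ring $\mathcal{O}_{Z_{i+1},Z_{i-1}}$. This semi-local ring carries a localization fiber sequence in $K$-theory whose two consecutive boundary maps are (up to sign) $\sum_{Z}\partial_{i+1,Z}$ followed by the boundary for $Z_{i-1}\subset Z_{i+1}$; equivalently, they are the first two differentials in a Cousin/coniveau-type complex on the semi-local surface. The identity $d^{2}=0$ in such a localization fiber sequence therefore forces the summed composition to vanish. Composing with the remaining $Z$-independent boundaries and applying $\det\circ\pi_{\ast}$ converts this additive vanishing in $K$-theory into the multiplicative vanishing of the product in $A^{\times}$, yielding the reciprocity.

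The principal obstacle is the precise matching of $\sum_{Z}\partial_{i+1,Z}$, originally defined flag-by-flag through the Tate-categorical machinery developed in the body of this paper and in \cite{Braunling:2014vn}, with the classical Cousin-type boundary for the two-dimensional semi-local ring $\mathcal{O}_{Z_{i+1},Z_{i-1}}$. This requires assembling the local Tate structures attached to the various $\zeta_{Z}$ into a coherent system as $Z$ varies, and verifying that the resulting summed boundary agrees with the one produced by the ambient localization fiber sequence on the semi-local surface. Once this identification is in place, the $d^{2}=0$ identity delivers the result at once.
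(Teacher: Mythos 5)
Your proposal identifies the right overall strategy — translate the symbols into iterated $K$-theory boundary maps via Theorem~\ref{thm:first}(b), isolate the two boundaries that cross the $i$-th level, and deduce the vanishing from $d^2=0$ in a localization/Gersten complex — and this is indeed what the paper does (the slogan ``$d^2=0$'' is made precise in Theorem~\ref{thm:abstract_Parshin} and Corollary~\ref{cor:abstract_Kato}). Your treatment of well-definedness via compactness of the $(n+1)$-tuple in the colimit ring $A_{\zeta}(X)$ also matches the paper's Definition~\ref{defi:F_xi_A} and the first lines of the proof of Theorem~\ref{thm:classicalCC}.

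However, there is a genuine gap in the middle step. You propose to realize the two relevant boundaries as consecutive differentials of the Cousin/coniveau complex of the classical two-dimensional semi-local ring $\mathcal{O}_{Z_{i+1},Z_{i-1}}$ (or its base change to $A$). This does not match the objects actually appearing in the composition $\partial_1\circ\cdots\circ\partial_n$. By the time the zigzag reaches level $i$, all of the levels $j<i-1$ and $j>i+1$ have already been completed-and-removed, so the ``surface'' on which your localization sequence should live is not $\Spec(\mathcal{O}_{Z_{i+1},Z_{i-1}}\otimes_k A)$ but the ring $A_{\zeta,Z}(X)$ produced by an iterated $(\Comp\circ\Loc)$ procedure (Definition~\ref{defi:F_xi_A}, Lemma~\ref{lemma:CCefimov}). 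For arbitrary $A$ these iterated completions are not controlled by the Noetherian theory: they are only well-behaved after passing to derived completion of stable $\infty$-categories, and one needs the weak-proregularity input of Proposition~\ref{prop:proregular} to see that the derived completions are still discrete rings. Without the categorical completion framework of Section~5 one has no localization fiber sequence to invoke, because there is no ambient Noetherian scheme containing both the punctured formal neighbourhood and the closed locus one would remove at the $i$-th step. In short, the obstacle you flag at the end is not a routine matching but precisely the content of Section~5 (Efimov-style completion of stable $\infty$-categories, Theorem~\ref{thm:general_efimov}) together with Theorem~\ref{thm:CCboundary} (identifying $\sigma_{X,\xi}^A$ with $\pi_*\circ\partial_{\mathcal{Z}^{\text{loc}}_{X,\xi}}$), and your proposal supplies no mechanism for producing the required fiber sequence in this setting.
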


See Theorem \ref{thm:classicalCC}. This theorem extends results for $X$ of dimension one by Anderson--Pablos Romo \cite{MR2036223} and P\'al \cite{MR2722779} (for $A$ 0-dimensional), Beilinson--Bloch--Esnault \cite{MR1988970} (for $A$ arbitrary), and results for $X$ of dimension 2 by Osipov--Zhu \cite{Osipov:2013fk}.

The finite dimensionality of the cohomology of a proper curve provides a key geometric input in proving the reciprocity law for 1-dimensional symbols. In the setting of higher dimensional reciprocity laws, we can morally interpret the ring $A_{X,\zeta}$ of Theorem \ref{thm:second} as the ring of $A$-valued rational functions of an exotic ``curve'' $X_{\zeta}$ associated to the almost saturated flag $\{Z_j\}_{j\neq i}$. In principle, this ``curve'' should be obtained by iteratively completing $X$ at the $Z_j$ and then removing the special point $Z_j$. However, at present, the theories of Berkovich or rigid analytic spaces are insufficient to handle such constructions.  Rather than develop such a theory, we take a non-commutative geometry approach and replace $X$ by its stable $\infty$-category of perfect complexes. The operations of localization and completion of schemes have analogues for stable $\infty$-categories, cf. Thomason--Trobaugh \cite{MR1106918} (localization) and Efimov \cite{Efimov:2010fk} (completion). We apply these in \S \ref{reciprocity} to construct a stable $\infty$-category which plays the role of ``$\Perf(X_{\zeta})$'' and we use the (non-commutative) ``geometry'' of this stable $\infty$-category to deduce the reciprocity law.

These categorical constructions could be pictured as a ``non-commutative shadow" of the formal scheme obtained by formal completion. Their role should be understood to be analogous to the one of the ``commutative shadows" utilized by Contou-Carr\`ere (and called \textit{ombres} in \cite{MR1272340}, \cite{MR3148632}).


For our proof of reciprocity, we adopt a general strategy which was first introduced by Gillet \cite{GilletThesis}\footnote{We thank the first anonymous referee for bringing this to our attention.}. The reciprocity law of Theorem \ref{thm:second} expresses information about the local geometry of a variety around an almost saturated flag.  As remarked above, our approach to higher symbols allows us to reduce the reciprocity law to the statement that $d^2=0$ in a Gersten-style complex.  As with the classical Gersten complex, the differentials arise as (sums of) boundary maps in $K$-theory localization sequences.  Our work on derived completion allows us to obtain these localization sequences in our setting and deduce reciprocity.

We now explain the strategy of this proof in the case $A=k$ and $n=2$. Let $Y$ be a smooth surface over $k$ and $x\in Y$ a closed point.  For a triple of non-zero elements $f,g,h$ of the fraction field of ${\Oo}_{Y,y}$ we must show that the product
$$\prod_{C} (f,g,h)_{C,x}$$
ranging over curves containing $x$, is well-defined and equals $1$. There exists a closed subset $Z \subset Y$, such that $Z$ is a union of curves containing $x$, and $f,g,h$ are regular elements on $U = Y \setminus Z$.  Our results above identify this product with a composition of boundary maps as in the lower path of the diagram

\[
\xymatrix{
K_3(U) \ar[r]^-{\partial} & K_2(Y\setminus \{x\},Z\setminus \{x\}) \ar[r] \ar[rd]_{\partial} & K_2(Y \setminus \{x\}) \ar[d]^{\partial} \\
& & K_1(\{x\}).
}
\]
However, this is also equivalent to the upper path of the diagram, the last two maps of which are successive maps in a long exact sequence.

For dimension $n>2$, we employ an analogous argument.  However, we must now replace the punctured surface $\Spec{\Oo}_{Y,x}-\{x\}$ with a more exotic object obtained by completing and removing at all the closed subsets in an almost saturated flag.  Our treatment of derived completions supplies us with the necessary localization sequences in this setting, while our treatment of symbols allows us to identify the appropriate product with a composition of boundary maps from these sequences.  It is then a relatively straightforward matter to show that this composition is zero when restricted to tuples of invertible elements of $A_{X,\zeta}$.


\subsection*{Acknowledgements}
We would like to thank T. Hausel for supporting a visit of the first and the third author to EPF Lausanne, where part of this work was carried out. We would like to thank A. Beilinson and V. Drinfeld for supporting a visit of the first and second author to the University of Chicago, where this paper was completed. The anonymous referees deserve our gratitude for their detailed reviews of our paper which led to major improvements in exposition.


\section{K-theory\label{sect_BackgrOnKTheory}}

\subsection{Background on the flavours of $K$-theory}

We shall use $K$-theory in various flavours, so let us quickly recall the key
players and motivate how and why they enter our considerations.

\subsubsection{Origins}
Historically inspired by the study of vector bundles in algebraic geometry, one can form
for any (small) exact category $\mathsf{C}$ the $K_{0}$-group%
\[
K_{0}(\mathsf{C})=\frac{\left\{  \text{iso-classes }[X]\text{ of objects }%
X\in\mathsf{C}\right\}  }{\text{relations }[X]=[X^{\prime}]+[X^{\prime\prime
}]\text{ for any exact sequence }X^{\prime}\hookrightarrow X\twoheadrightarrow
X^{\prime\prime}}\text{.}%
\]
Choosing $\mathsf{C}$ to be the exact category of vector bundles
$\operatorname*{VB}(X)$ on a variety $X$, this provided the necessary context for Grothendieck's extension of the classical Riemann--Roch theorem.  

The freedom to develop the whole theory for very general categories instead of
just vector bundles has proven very useful and will also be vital for our considerations.

\subsubsection{Localization (geometry)}
Returning to vector bundles, studying the relationship of the $K_{0}$-group
for a scheme $X$ in comparison to the one of a reduced closed subscheme
$Z\subseteq X$ and its open complement $U=X-Z$ leads to ``higher'' $K$-groups fitting together into  the so-called \emph{localization sequence}. In this geometric setting (and only if
everything is smooth), it takes the form of an exact sequence%
\begin{equation}
\cdots\longrightarrow K_{n}(Z)\longrightarrow K_{n}(X)\longrightarrow
K_{n}(U)\longrightarrow K_{n-1}(Z)\longrightarrow\cdots\text{.} \label{leex1}%
\end{equation}
In fact, this long exact sequence can be understood in terms of different
categories. For example, still assuming everything to be smooth, one gets the
relevant $K_{0}$-groups by taking the category of coherent sheaves on $Z,X$
and $U$ respectively, and obtains%
\[
\operatorname*{Coh}(U)=\operatorname*{Coh}(X)/\operatorname*{Coh}%
\nolimits_{Z}(X)\text{,}%
\]
expressing the category of coherent sheaves on the open complement $U$ as the
quotient abelian category of the coherent sheaves on $X$, modulo those having
support in $Z$, called $\operatorname*{Coh}\nolimits_{Z}(X)$. In other words:
The decomposition of $X$ into $Z$ and its complement $U$ can be reflected as a
subcategory and the respective quotient on the level of categories. This
suggests a general picture for categories, valid beyond this geometric application:

\subsubsection{Localization (general principles)}
Generalized to arbitrary (say abelian or exact categories) $\mathsf{C}$ and
suitable subcategories $\mathsf{C}^{\prime}\subseteq\mathsf{C}$, the above
picture generalizes to long exact sequences%
\begin{equation}
\cdots\longrightarrow K_{n}(\mathsf{C}^{\prime})\longrightarrow K_{n}%
(\mathsf{C})\longrightarrow K_{n}(\mathsf{C}/\mathsf{C}^{\prime}%
)\longrightarrow K_{n-1}(\mathsf{C}^{\prime})\longrightarrow\cdots\text{.}
\label{leex2}%
\end{equation}
In the hands of Quillen, general algebraic $K$-theory was defined as the
homotopy groups of certain spaces attached to (for example) exact categories,
as in%
\begin{equation}
K_{n}(\mathsf{C}):=\pi_{n}K(\mathsf{C})\text{,} \label{leex3}%
\end{equation}
where $K(\mathsf{C})$ is a pointed space. There are several ways to set up
$K(\mathsf{C})$;\ e.g., as a simplicial set using simplicial homotopy theory
or as a topological spaces using classical homotopy theory. Moreover, there
are different ways to set up these spaces, all leading to the same homotopy
type (e.g., the $Q$- or $S$-construction). These differences are not so
important for the present paper. Background for simplicial homotopy theory can
be found for example in \cite{MR1206474}, \cite{MR0245005} or \cite{MR2840650}.

\subsubsection{Finer points}\label{subsect:finerpointskthy}
To get a really nice picture, the above suggests various improvements:

(1a) As the $K$-groups are defined as the homotopy groups of a space as in
Equation \eqref{leex3}, it is natural to hope that the long exact sequences in
Equation \eqref{leex1} resp. Equation \eqref{leex2} stem from fiber sequences of
pointed spaces. This can indeed be implemented and leads to defining Quillen
$K$-theory as an invariant of certain categories, taking values in pointed
spaces. This path is already taken by Quillen \cite{MR0338129} or Waldhausen
\cite{MR802796}. These two approaches only differ in generality, but yield the
same theory, which in this paper will be called \emph{connective }%
$K$\emph{-theory}.

(1b) Actually, the pointed spaces $K(\mathsf{C})$ of connective $K$-theory are
of a very special type: They come equipped with the structure of an infinite
loop space \cite{MR505692}. While infinite loop spaces can be regarded on the
one hand as pointed spaces with extra structure, they can equivalently be
regarded as connective spectra, i.e. spectra $S$ such that $\pi_{i}S=0$ for
all $i<0$. Thus, modulo switching between equivalent categories, the $K(-)$ in
Equation \eqref{leex3} can alternatively be taken to refer to a
(connective)\ spectrum. Background on spectra can be found for example in \cite[\S 10.9]{MR1269324} (for a survey), or in
\cite[\S \ 1]{MR1695653}, \cite[\S \ 1]{MR1860878} or \cite[\S1.4]{Lurie:ha}for more general treatments.

(2) The sequence in Equation \eqref{leex1} only exists under very restrictive
assumptions, and using Quillen's $K$-theory it is not right-exact at $K_{0}$.
However, this nuisance can be smoothened out and leads to slightly modified
versions of $K$-theory. Nowadays, and also in the present paper, these are all
jointly generalized to the so-called \emph{non-connective }$K$\emph{-theory}
(we recall the details below). A general construction on the level of
arbitrary exact categories is given in \cite{MR2206639}. The cited paper also
proves the compatibility with the previous approaches to resolve this issue
(e.g., the so-called Bass `negative $K$-groups' \cite{MR0249491} or
Thomason--Trobaugh $K$-theory \cite{MR1106918}). Unfortunately, there is no
way to fix the lack of exactness at $K_{0}$ without needing negative
$K$-groups further to the right in the respective sequences as in Equation
\eqref{leex2}. Thus, non-connective $K$-theory cannot really be modelled in
spaces. However, the property to be a spectrum remains intact also for
non-connective $K$-theory. Hence, the natural habitat for non-connective
$K$-theory are spectra. This time, however, they are not necessarily
connective. In particular, it is not necessarily possible to still model this
using infinite loop spaces instead of spectra.

\begin{rmk}
This also explains the names of connective and non-connective
$K$-theory. This use of terminology is also in line with the conventions of
\cite{MR3070515}, which shows that one can also describe the two variants of
$K$-theory in terms of certain universal properties, giving a further
justification to work with both theories in parallel, yet carefully
distinguish between them.
\end{rmk}

(3) Quillen's foundations for connective $K$-theory from \cite{MR0338129}
allow all exact categories as input; and similarly \cite{MR2206639} gives
similar foundations for non-connective $K$-theory. However, wanting a very
general localization sequence as in Equation \eqref{leex2} there is an issue
with the formation of the quotient $\mathsf{C}/\mathsf{C}^{\prime}$. For many
natural choices of exact categories and subcategories this quotient does not
reasonably exist as an exact category. Going beyond this, there are various
interesting categories, for example arising from glueing constructions of
categories, which are of a profoundly more subtle nature than what can be
captured through the formalism of exact categories. To this end, it roughly
speaking makes sense to generalize $K$-theory to accept all stable $\infty
$-categories \cite{Lurie:ha} as input. Abelian categories (or exact
categories) have a natural attached stable $\infty$-category, so that this is
a genuine generalization. This generalization is available for both connective
and non-connective $K$-theory, and as described above, \cite{MR3070515}
describes either in terms of a universal property whose formulation also
necessitates the use of stable $\infty$-categories.

\subsubsection{Milnor $K$-theory}
Finally, we shall also use \emph{Milnor }$K$\emph{-theory}. Classically, this is only
defined for fields, even though the definition can be extended to
local rings \cite{MR2461425}, \cite{MR2551760}. At least for fields $F$, one
just has%
\[
K_{n}^{M}(F):=\frac{T_{\mathbb{Z}}^{\ast}(F^{\times})}{\left\langle
x\otimes(1-x)\text{ for all }x\in F\setminus\{0,1\}\right\rangle }\text{,}%
\]
where $T_{\mathbb{Z}}^{\ast}(M):=\bigoplus_{n\geq0}M^{\otimes_{\mathbb{Z}}%
^{n}}$ denotes the free tensor algebra of an abelian group. Historically, this
was regarded as a candidate definition for higher $K$-groups, but since then
the picture has clarified a lot: In this paper we mostly refer to Milnor
$K$-theory because of the simplicity of its definition, or the natural graded
ring homomorphism%
\[
K_{\ast}^{M}(F)\longrightarrow K_{\ast}(F)\text{,}%
\]
which easily exhibits high degree elements in the connective $K$-theory of fields.

\begin{rmk}
\label{rmk_RelationToMotivicCohomology}The deeper truth however is that the
motivic Atiyah--Hirzebruch spectral sequence starts from motivic cohomology
$H^{n}(F,\mathbb{Z}(m))$ on the $E_{2}$-page and converges to connective
$K$-theory. It satisfies%
\[
H^{n}(F,\mathbb{Z}(n))\cong K_{n}^{M}(F)\text{,}%
\]
so the deeper reason for the similarities between Milnor and connective
$K$-theory (of a field) is just their `proximity' as provided by the motivic
weight filtration on the $K$-theory spectrum, exhibited here through the
spectral sequence. The comparison of this with \'{e}tale K-theory (resp.
\'{e}tale motivic cohomology) also lies at the core behind the compatibility
to Galois cohomology in Diagram \ref{lcioo1}. However, none of this is needed
in the present paper. See \cite{MR2242284} and \cite{MR2181824} for background.
\end{rmk}

\subsection{Axiomatic review of Algebraic $K$-theory}\label{sub:k}

After this review, let us summarize the key statements we shall need in the format most suitable for us. We view algebraic $K$-theory as a machine, which assigns, to an exact category or stable $\infty$-category $\C$, its spectral shadow $\Kb_{\C}$. This machine sends exact functors $\C \to \D$ to maps of spectra $\Kb_{\C} \to \Kb_{\D},$ and preserves exact sequences. We refer the reader to appendix \ref{inftycats} for a brief overview of the theory of (stable and unstable) $\infty$-categories, and to \cite{Lurie:2009vn,Lurie:ha} for detailed references.  

We encourage the reader unfamiliar with stable $\infty$-categories to think of them as a higher homotopical enrichment of triangulated categories. For example, by  \cite[Theorem 1.1.2.14]{Lurie:ha}, the homotopy category $\Ho(\C)$ of any stable $\infty$-category inherits a canonical triangulated structure.  The advantage of working with stable $\infty$-categories is that many standard constructions for triangulated categories become better behaved and more conceptually straightforward in this context. 

Recall that given an $\infty$-category $\C$, we can form an $\infty$-category of ``ind-objects'' $\Ind(\C)$ (with subcategories $\Ind_\kappa(\C)$ for each regular $\kappa$) of ``formal filtered colimits'' of diagrams in $\C$ (or such over diagrams of size at most $\kappa$) (see \cite[\S5.3.5]{Lurie:2009vn}). If $\C$ is a stable $\infty$-category, then so is $\Ind_\kappa(\C)$  \cite[Prop. 1.1.3.6]{Lurie:ha}. Similarly, every $\infty$-category $\C$ admits an idempotent completion $\C\to\C^{\ic}$ (see \cite[\S5.1.4]{Lurie:2009vn}), and if $\C$ is stable, so is $\C^{\ic}$ \cite[Cor. 1.1.3.7]{Lurie:ha}.  Last, just as there is a good notion of exact functors of triangulated categories $\C\to\D$ and of exact sequences 
\[
	\C\to\D\to \D/\C
\]
of such functors, there is a good notion of such for stable $\infty$-categories. In fact, by \cite[Prop. 5.1.5]{MR3070515}, a sequence of stable $\infty$-categories is exact if and only if the induced sequence of homotopy categories is an exact sequence of triangulated categories. In particular, given a fully faithful exact functor of (presentable) stable $\infty$-categories $\C\to\D$, we can form the quotient stable $\infty$-category $\D/\C$, which we should think of as a higher homotopical analogue of the classical Verdier quotient. 

\subsubsection{Connective Algebraic $K$-theory}
The proposition below captures the most important phenomena for the so-called connective $K$-theory of stable $\infty$-categories (cf. \cite{MR3070515}). This is the flavour of $K$-theory which is compatible with Quillen's original definition of algebraic $K$-theory.

In the following we denote by $\Sp_{\geq}$ the stable $\infty$-category of connective spectra. We refer the reader to Subsection \ref{subsub:spectra} for a brief reminder of stable homotopy theory, and for more details to \cite[\S 10.9]{MR1269324} (for a survey),
\cite[\S \ 1]{MR1695653}, \cite[\S \ 1]{MR1860878} or \cite[\S1.4]{Lurie:ha}.

\begin{proposition}\label{prop:char_conn_K}
The functor of connective $K$-theory for stable $\infty$-categories
$$\Kk_-\colon \st \to \Sp_{\geq}$$
satisfies the following properties.
\begin{itemize}

	\item[(1)] If $\C$ is a stable $\infty$-category admitting countable products (or coproducts), then $\Kk_{\C} \cong 0$.
	
	\item[(2)] The inclusion $\C \to \C^{\ic}$ (where $\ic$ denotes idempotent completion) gives rise to a map of connective spectra $\Kk_{\C} \to \Kk_{\C^{\ic}}$, inducing an isomorphism on $\pi_i$ for $i\geq 1$, and a monomorphism on $\pi_0$.
	
	\item[(3)] Let $\C \hookrightarrow \D \twoheadrightarrow \D/\C$ be an exact sequence of stable $\infty$-categories, where we denote the functor $\C \to \D$ by $i$ and $\D \to \D/\C$ by $q$. Then, there is a fibre sequence
	\[
		\xymatrix{
			\Kk_{\C} \ar[r]^i \ar[d] &  \Kk_{\D} \ar[d]^q \\
			0 \ar[r] & \Kk_{\D/\C}
		}
	\]
	in the $\infty$-category $\Sp_{\geq}$ of connective spectra.	
	
\end{itemize}
\end{proposition}

Property (3) is often referred to as \emph{proto-localization} (e.g. by \cite{MR1106918}). The long exact fibration sequence for $\pi_{*}$ yields a long exact sequence in non-negative degrees. The map $\pi_0(\Kk_{\D}) \to \pi_0(\Kk_{\D/\C})$ will not be surjective in general\footnote{This is the same issue which already appears in purely geometric applications and is alluded to in \S \ref{subsect:finerpointskthy}.}. This suggests the existence of negative $K$-groups, obtained by the homotopy groups of a non-connective $K$-theory spectrum. This leads to \emph{non-connective} $K$-theory, whose properties we recall in the following section.

\subsubsection{Non-Connective Algebraic $K$-Theory}

In the work of Blumberg--Gepner--Tabuada, the following properties were shown to be characteristic for non-connective $K$-theory (see \cite[Thm. 9.10]{MR3070515}). In the following we denote by $\Sp$ the stable $\infty$-category of all spectra.

\begin{proposition}\label{prop:char_K}
Non-connective algebraic $K$-theory is a functor
$$\Kb_-\colon \st \to \Sp$$
satisfying the following properties.
\begin{itemize}

	\item[(1)] If $\C$ is a stable $\infty$-category admitting countable products (or countable coproducts), then $\Kb_{\C} \cong 0$.
	
	\item[(2)] The inclusion $\C \to \C^{\ic}$ (where $\ic$ denotes idempotent completion) gives rise to an equivalence of spectra $\Kb_{\C} \xrightarrow{\cong} \Kb_{\C^{\ic}}$.
	
	\item[(3)] Let $\C \hookrightarrow \D \twoheadrightarrow \D/\C$ be an exact sequence of stable $\infty$-categories, where we denote the functor $\C \to \D$ by $i$ and $\D \to \D/\C$ by $q$. Then, there is a a bi-cartesian square
	\[
		\xymatrix{
			\Kb_{\C} \ar[r]^i \ar[d] &  \Kb_{\D} \ar[d]^q \\
			0 \ar[r] & \Kb_{\D/\C}
		}
	\]
	in the stable $\infty$-category $\Sp$ of spectra.	
	
\end{itemize}
\end{proposition}

We say that non-connective $K$-theory $\Kb_-$ \emph{completes} connective $K$-theory $\Kk_-$, referring to the canonical equivalence
$$\Kk_{\C^{\ic}} \cong \tau_{\geq 0} \Kb_{\C}.$$ Following Schlichting \cite{MR2206639}, we see how every connective theory, satisfying the axioms of Proposition \ref{prop:char_conn_K}, induces a non-connective $K$-theory, subject to the properties of Proposition \ref{prop:char_K} (see also \cite{MR3070515}). This requires the \emph{suspension} of a stable $\infty$-category.

\begin{definition}\label{defi:suspension}
We define the \emph{suspension} of a stable $\infty$-category $\C$ as the stable $\infty$-category
$$\Ss_{\kappa}(\C) = \Ind_{\kappa}\C/\C,$$
where $\kappa$ denotes an arbitrary infinite cardinal, and $\Ind_{\kappa}\C$ denotes the stable $\infty$-category of Ind-objects represented by diagrams of size at most $\kappa$. Let $\Calk_{\kappa}(\C)$ denote $\Ss_{\kappa}(\C)^{\ic}$.
\end{definition}

By definition, we have an exact sequence of stable $\infty$-categories
$$\C \hookrightarrow \Ind(\C) \twoheadrightarrow \Ss(\C).$$
Using the fact that $\Ind(\C)$ admits countable coproducts, properties (1) and $(3)$ of Proposition \ref{prop:char_conn_K} imply that
$$\Kk_{\C} \to 0 \to \Kk_{\Ss(\C)}$$
is a fibre sequence of connective spectra. Since $\pi_0(0) = \pi_0(\Ss(\C))$, we know that it is actually a fibre-cofibre sequence of spectra. This allows us to identify $\Kk_{\C}$ with $\Omega\Kk_{\Ss(\C)}$. We define the non-connective completion $\Kb_-$ to be the functor
\begin{equation}\label{eqn:ncK}
\varinjlim \Omega^n\Kk_{\Calk^n(\C)}
\end{equation}
where $\Calk^0(\C):=\C$ and $\Calk^n(\C):=\Calk(\Calk^{n-1}(\C))$ for $n>0$.

\begin{definition}\label{defi:perf}
Let $\C$ be an (idempotent complete) exact category. We have a well-defined dg-category $\Ch(\C)$ of  bounded chain complexes in $\C$. We denote by $\Chac(\C)$ the full subcategory of acyclic complexes. The stable $\infty$-category $\Perf(\C)$ is defined to be the dg-nerve (see \cite[\S 1.3.1]{Lurie:ha}) of the dg-quotient $\Ch(\C)/\Chac(\C)$. Since the latter is a pre-triangulated dg-category (see \cite[\S 2]{MR1667558}), $\Perf(\C)$ is stable.
\end{definition}

The lemma below follows from the discussion in \cite[\S 9.1]{MR3070515} and \cite[\S 6.2]{MR2206639}
\begin{lemma}\label{lemma:exact_comparison}
Let $\C$ be an exact category. The non-connective $K$-theory of $\C$, in the sense of Schlichting \cite{MR2206639}, agrees with the non-connective $K$-theory of the stable $\infty$-category $\Perf(\C)$ in the sense of Blumberg--Gepner--Tabuada \cite{MR3070515}.
\end{lemma}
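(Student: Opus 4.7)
The plan is to match the two iterative constructions of non-connective $K$-theory. Both Schlichting and Blumberg--Gepner--Tabuada obtain their non-connective $\Kb_{\C}$ by delooping a connective functor against a ``suspension'' procedure, so it suffices to compare the connective inputs and the suspensions, and then to check that the natural transformation used in each delooping step is shared.

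First, I would verify the comparison at the connective level. Schlichting's connective $K$-theory of an exact category $\C$ agrees with $\Kk_{\Perf(\C)}$: by the Gillet--Waldhausen theorem, $K^{\mathrm{Quillen}}(\C) \simeq K(\Ch^b(\C))$ with respect to the quasi-isomorphisms, and $\Ch^b(\C)/\Chac(\C)$ is exactly the pre-triangulated dg-model of $\Perf(\C)$ whose dg-nerve yields the stable $\infty$-category of Definition \ref{defi:perf}. Both Schlichting's and BGT's connective theories are realized by $S_{\bullet}$-constructions on these equivalent objects, and the characterization of Proposition \ref{prop:char_conn_K} pins the outcome uniquely.

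Second, I would compare the suspensions. Schlichting's suspension $s\C$ is the quotient of a flasque exact category $F\C$ (containing countable coproducts) by $\C$, while the $\infty$-categorical suspension is $\Calk_{\kappa}(\C)=(\Ind_{\kappa}\Perf(\C)/\Perf(\C))^{\ic}$, and $\Ind_{\kappa}\Perf(\C)$ likewise admits countable coproducts. Property (1) of Proposition \ref{prop:char_conn_K} forces $\Kk_{F\C}\simeq 0 \simeq \Kk_{\Ind_{\kappa}\Perf(\C)}$, so the localization sequences of property (3) in each framework identify
\begin{equation*}
\Sigma \Kk_{\C} \;\simeq\; \Kk_{s\C} \quad\text{and}\quad \Sigma \Kk_{\Perf(\C)} \;\simeq\; \Kk_{\Calk_{\kappa}(\C)}.
\end{equation*}
Combined with the first step, this produces a canonical equivalence between the two suspension procedures compatible with the connecting maps $\Kk \to \Omega\Kk\circ(\mathrm{susp})$.

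Third, since the non-connective completion on both sides is defined by the colimit $\varinjlim \Omega^n \Kk_{\mathrm{susp}^n(\C)}$ (cf.\ equation \eqref{eqn:ncK}), an inductive application of the two previous steps propagates the comparison to all levels and yields the desired equivalence of spectra $\Kb_{\C}^{\mathrm{Schlichting}}\simeq \Kb_{\Perf(\C)}^{\mathrm{BGT}}$. The main obstacle is the bookkeeping in the second step: although both flasque and $\Ind$-type resolutions are conceptually parallel ``Eilenberg swindle'' targets, one must check that the structural natural transformation governing the delooping coincides under the identification, which amounts to reconciling the zig-zag in \cite[Sect.\ 12]{MR2206639} with the connecting map of property (3) in \cite[Sect.\ 9]{MR3070515}. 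Once this naturality is in hand, the lemma follows formally.
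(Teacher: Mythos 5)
The paper gives no argument of its own here; it simply cites \cite[sect.\ 9.1]{MR3070515} and \cite[sect.\ 6.2]{MR2206639}, so what you are really being asked to reconstruct is the content of those references. Your overall strategy — match the connective inputs via Gillet--Waldhausen, match the two Eilenberg-swindle suspensions, and then chase the colimit $\varinjlim \Omega^n \Kk_{(\mathrm{susp})^n}$ — is the right one and agrees in spirit with what the paper points to.

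Two cautions. First, your displayed identification $\Sigma \Kk_{\C} \simeq \Kk_{s\C}$ is an overstatement: the proto-localization axiom (Proposition \ref{prop:char_conn_K}(3)) only supplies a \emph{fibre} sequence of connective spectra, so one gets $\Kk_{\C} \simeq \Omega \Kk_{s\C}$, which on suspending yields only $\tau_{\geq 1}\Sigma\Kk_{\C} \simeq \tau_{\geq 1}\Kk_{s\C}$; the obstruction $\pi_0(\Kk_{s\C}) = K_{-1}(\C)$ is precisely the new negative $K$-group, and the reason the colimit over $n$ is needed at all. The argument still goes through, but the intermediate formula as written is false. Second, the step you label the ``main obstacle'' — that Schlichting's exact-category quotient $F\C \twoheadrightarrow s\C$ and the stable-$\infty$-categorical $\Ind_{\kappa}\Perf(\C) \twoheadrightarrow \Calk_{\kappa}(\Perf(\C))$ produce $K$-equivalent outputs in a way compatible with the structure maps of the two colimit systems — is precisely where the real content sits, and your sketch only names it without discharging it. Concretely, to iterate step one you need $\Kk_{\Perf(s\C)} \simeq \Kk_{\Calk_\kappa(\Perf(\C))}$ compatibly with the connecting maps, which requires knowing that $\Perf(-)$ carries Schlichting's localization sequence $\C \hookrightarrow F\C \twoheadrightarrow s\C$ to an exact sequence of stable $\infty$-categories (a Thomason--Trobaugh--Neeman--Schlichting statement). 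Since the paper itself outsources exactly this bookkeeping, your acknowledgement is acceptable as a sketch, but as a complete proof you would need to establish that compatibility rather than assume it.
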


It will be necessary to compare algebraic $K$-theory with the original category, in order to be able to use it. Heuristically, this is captured by the slogan that $\Kb_{\C}$ is a spectrum, where objects in $\C$ give rise to points, automorphisms of objects give rise to loops, and, for $n\ge 1$, commuting $n$-tuples of automorphisms in $\C$ give rise to elements of $K_n(\C) = \pi_n(\Kb_{\C})$. This intuition is captured by the following observation.

\begin{rmk}\label{rmk:Ccc4}
We denote by $\C^{\times}$ the $(\infty-)$groupoid of objects in $\C$ (i.e. we discard all non-isomorphisms). Recall that every $(\infty-)$groupoid can be viewed as an unpointed space via the geometric realization of its nerve\footnote{We review the nerve, i.e. the ways of regarding a category as a simplicial set or space in \S \ref{sect:reviewnerveabit}.}. There exists a canonical morphism of pointed spaces
$(\C^{\grp})_+ \to \Omega^{\infty}\Kb_{\C},$
and by the adjunction $\Sigma^{\infty} \dashv \Omega^{\infty}$, a morphism of spectra
$\Sigma^{\infty}_+\C^{\grp} \to \Kb_{\C}$, see \cite[\S 1.3, p. 12]{MR802796}.
\end{rmk}

\begin{example}
Under special circumstances the last map of the previous remark can be promoted to an equivalence. Such a phenomenon underlies Equation \eqref{lzza5} along with the fact that the Tate category has vanishing $K_0$-group.
\end{example}

\begin{definition} We shall frequently use the following shorthands:
\begin{enumerate}
\item If $R$ is a ring, we write $\Kb_{R}$ to denote the nonconnective K-theory spectrum of the category of perfect complexes over $R$.
\item Analogously, if $X$ denotes a scheme, we write $\Kb_{X}$ for the nonconnective K-theory spectrum of perfect complexes on $X$.
\item If $X$ has a closed subscheme $Z$, we write $\Kb_{X,Z}$ for the nonconnective K-theory spectrum of the category of perfect complexes on $X$ with support in $Z$. The latter means that they are required to be acyclic over the complement $X-Z$.
\end{enumerate}
\end{definition}

\begin{example}\label{ex:localization_geometrically}
If $X$ is Noetherian (for example) and has the closed subscheme $Z$, then there is an exact sequence relating their stable $\infty$-categories of perfect complexes. Using Propositon \ref{prop:char_K} (3) we obtain the fibre sequence
$$ \Kb_{X,Z} \to \Kb_{X} \to \Kb_{X-Z} $$
of spectra. The induced long exact sequence of the homotopy groups of the $K$-theory spectra is perhaps the most prominent example of the localization sequence.
\end{example}


\section{The CC symbol via boundary maps}\label{sect:CCquick}

In this section we will give a first definition of our higher Contou-Carr\`ere symbol. We follow a generalization of the idea of boundary maps (which we had called \textit{Idea 1} in the introduction).
Instead of working with iterated loop groups, we use localizations-completions along flags of subschemes. Abstractly, these look like iterated loop groups, but our methods avoids choosing a coordinate. The case discussed in the introduction follows as a special case (see Example \ref{ex:stdx}).

\subsection{Flags of closed subschemes}\label{subsub:reminder_adeles}
Let $X$ be a reduced excellent separated scheme of dimension $n$. A \emph{flag} is a sequence
\[
\xi \colon Z_n \supset \cdots \supset Z_0
\]
of integral closed subschemes of pure dimension $\dim Z_i = i$, indexed by a subset of $\{0,1,\ldots,n\}$. If it is indexed by all of $\{0,1,\ldots,n\}$, we call it \emph{saturated}. If exactly one dimension is missing, such flags are called \emph{almost saturated}.

\begin{definition}\label{def:ad1}If $\xi:=(Z_{n}\supset\cdots\supset Z_{0})$ denotes a flag, we abbreviate the Parshin--Beilinson ad\`{e}le
ring by%
\[
F_{X,\xi} := A(\xi,\mathcal{O}_{X})=A(\{(Z_{n} \supset \cdots \supset Z_{0})\}),\mathcal{O}%
_{X})\text{.}%
\]
The notation $A(-,-)$ is as in Beilinson's original paper \cite[\S 2]{MR565095}.
\end{definition}
The definition of these ad\`{e}les is alternatively also given in \cite[Proposition 2.1.1]{MR1138291} or \cite[\S 2.1]{MR3536437}, viewed from different angles.

\begin{example}
This section also covers the case relevant for
\begin{equation}
L^{n}G(A):=G\left(  \left.  A((T_{1}))((T_{2}))\ldots((T_{n}))\right.
\right)
\end{equation}
as discussed in the introduction. Choose $X$ to be affine $n$-space and take a standard flag of coordinate hyperplane subspaces. See also Example \ref{ex:affinespacestd}.
\end{example}\label{ex:stdx}

For a saturated flag, there is a canonical isomorphism of rings
\begin{equation}\label{llc5}
F_{X,\xi}\cong\prod_{i=1}^{r}F_{i}%
\end{equation}
for some finite $r$, and each $F_{i}$ is an $n$-local field. For a proof, see \cite[\S 3]{MR1213064} or
\cite[Theorem 4.2]{MR3536437}.

Whenever $F$ denotes an $n$-local field, this means that it comes with a canonically determined diagram%
\begin{equation}%
\bfig\node x(0,1200)[F]
\node y(0,900)[\mathcal{O}_{1}]
\node z(300,900)[k_1]
\node w(300,600)[\mathcal{O}_{2}]
\node u(600,600)[k_2]
\node v(600,300)[\vdots,]
\arrow/{^{(}->}/[y`x;]
\arrow/{->>}/[y`z;]
\arrow/{^{(}->}/[w`z;]
\arrow/{->>}/[w`u;]
\arrow[v`u;]
\efig
\label{lcg1}%
\end{equation}
where each $\mathcal{O}_{i}$ denotes the rings of integers of the field depicted above it, and each $k_{i}$ denotes the residue field of the ring depicted to its left.

The $K$-groups of the various rings attached to $F$ are related by the localization sequence
\begin{equation}\label{eqbdryhf}
\cdots \to K_i(k) \to K_i(\mathcal{O}) \to K_i(F) \overset{\partial }{\to} K_{i-1}(k) \to \cdots,
\end{equation}%
which can be used inductively for each step in the above downward ladder of residue fields because of an identification of $\Kb_{\mathcal{O},\mathfrak{m}}$ with the $K$-theory of the residue field $\kappa{(\mathfrak{m}})$.\footnote{As the rings are all regular, we can also work with the $K$-theory of coherent sheaves, where devissage applies. Hence, the $K$-theory of coherent sheaves with support in the maximal ideal is equivalent to the $K$-theory of the residue field. This yields the identification.}

The above localization sequence stems from the bi-cartesian square
\[
\xymatrix{
\Kb_{\mathcal{O},\mathfrak{m}} \ar[r] \ar[d] & \Kb_{\mathcal{O}} \ar[d]^{- \otimes_{\mathcal{O}}F} \\
\bullet \ar[r] & \Kb_{F}.
}
\]
In order to obtain this square, apply Example \ref{ex:localization_geometrically} to $\Spec \mathcal{O}$ and the closed subscheme cut out by the unique maximal ideal (the valuation ideal). The open complement is just $\Spec F$. In the present situation it makes no difference whether we use connective or non-connective $K$-theory.

\begin{rmk} We will soon generalize the above by instead using the square in Equation \eqref{f_sq1} below.
\end{rmk}

\begin{definition}\label{defi:higher_tame}
Let $F$ be an $n$-local field.
\begin{enumerate}
\item The \emph{higher tame symbol (in Quillen $K$-theory)} is defined to be the composition
$$\partial^{(1)} \circ \cdots \circ \partial^{(n)}\colon K_{n+1}(F) \to K_1(\kappa) = \kappa^{\times},$$
where  $\kappa$ is the last residue field and $\partial$ refers to the respective boundary maps coming (inductively for each residue field) from the localization sequence in \eqref{eqbdryhf}.
\item In many ways simpler, the \emph{higher tame symbol (in Milnor $K$-theory)} is defined to be the composition
$$\partial^{(1)} \circ \cdots \circ \partial^{(n)}\colon K_{n+1}^{M}(F) \to K_1(\kappa) = \kappa^{\times},$$
where $K^{M}$ refers to Milnor $K$-theory and $\partial$ is the boundary map in an entirely analogous localization sequence in motivic cohomology. See Remark \ref{rmk_RelationToMotivicCohomology} for the relation between Quillen and Milnor $K$-theory.
\end{enumerate}
\end{definition}

The higher tame symbol in Milnor $K$-theory is simpler because Milnor $K$-groups have a generator-relator presentation and the relevant boundary map can alternatively be defined by an explicit formula. See \cite[\S 2]{MR0260844}. In fact, historically this was known before the interpretation as motivic cohomology. Only the latter however shows how closely connected both viewpoints are.

We move on to the relative situation.

\begin{definition}If $X$ is additionally a scheme of finite type over $k$, then for every $k$-algebra $A$ we define
\[
A_{X,\xi} := F_{X,\xi}  \otimes_k  A,
\]
and for saturated flags, we note that the canonical isomorphism of Equation \eqref{llc5} can be promoted to an isomorphism of $k$-algebras, see \cite[Theorem 4.2]{MR3536437}.
\end{definition}

Following Morrow's \cite{morrow}, we give a self-contained construction of $F_{X,\xi}$, in a format which will be particularly useful for us later.

\begin{definition}\label{defi:equiheighted_localization}
An ideal $I \subset R$ of a Noetherian ring $R$ is called \emph{equiheighted} if all minimal prime ideals over $I$ have the same height in $R$. We define the localization of an $R$-module $M$ at $I$, to be
$$M_I = S^{-1}M,\text{ where } S=\{s \in R~|~s\text{ is a non-zero-divisor in }R/I\}.$$
\end{definition}

Geometrically, an equiheighted ideal defines a closed subspace of $\Spec R$, with all irreducible components having the same codimension in $\Spec R$. Although not completely obvious, the two operations introduced below preserve chains of equiheighted ideals \cite[Lemma 7.3]{morrow}.

\begin{definition}\label{defi:comp&loc}
Let $R$ be a Noetherian ring of Krull dimension $n$. For a chain of equiheighted ideals $\xi=(I_k \subset \dots \subset I_0)$, with $\height I_i = n-i$, we define the \emph{completion operation}
$$\Comp(R,\xi) = (\widehat{R}_{I_0},\xi).$$
We denote by
$$\Loc (R,\xi) = (R_{I_1},\xi'),$$
the \emph{localization operation}, where $\xi'$ is the restriction to $R_{I_1}$ of the shifted chain of ideals given by $I'_i = I_{i+1}.$
\end{definition}

\begin{example}
    If $R$ is a Noetherian domain of Krull dimension $1$, then for every prime ideal $\mathfrak{p}$, we can consider the chain $\xi=( 0 \subset \mathfrak{p})$. In this case, we have $(\Loc \circ \Comp) (R,\xi) = \mathrm{Frac}\;\widehat{R}_{\mathfrak{p}}$.
\end{example}

\begin{definition}\label{defi:HLF_complete_localize}
Let $R$ be an excellent reduced ring of Krull dimension $n$. For a chain of radical equiheighted ideas $(0 = I_n \subset I_{n-1} \subset \dots \subset I_0)$, with $\mathrm{ht} I_i = n-i$, we define
$$F_{\Spec R,\xi} \cong (\Loc\circ\Comp)^n(R,\xi).$$
\end{definition}
This definition is compatible with Definition \ref{def:ad1}.


%

\subsection{Boundary maps of a flag}

We begin by giving a precise definition of the completion of a scheme at a closed subscheme. Although this seems fairly straight-forward in the affine case, it is necessary to be finical in general.

\begin{definition}\label{defi:comp}
Let $X$ be a scheme and $\mathcal{I} \subset \Oo_X$ a sheaf of ideals for which the corresponding closed subscheme $Y\subset X$ is affine. We define the \emph{completion of $X$ at $Y$} to be the affine scheme $$\Cb_YX = \Spec \varprojlim_{n \in \mathbb{N}}\Gamma(X,\Oo_X/\mathcal{I}^n).$$
\end{definition}

A related construction is the formal neighbourhood $\widehat{X}_Y$. It is defined to be the direct limit in the sense of formal schemes, of the family of schemes $Y_n = \Spec_X \Oo_X/\mathcal{I}^n$. The completion of Definition \ref{defi:comp} on the other hand is equivalent to the direct limit of the $X$-schemes $Y_n$ in the category of affine schemes.

\begin{warning}
The definition above could lead to pathological situations if $Y$ was not assumed to be affine. For example, if $Y \subset \mathbb{P}^n$ is an embedded projective curve, the inverse limit $\varprojlim_{n \in \mathbb{N}} \Oo_X/\mathcal{I}^n$ in the category of $\Oo_X$-modules is not necessarily quasi-coherent.
\end{warning}

By virtue of Chevalley's theorem \cite{EGA} (or for a recent exposition, see Conrad \cite{MR2356346}), affineness of $Y$ only depends on the underlying reduced subscheme $Y^{\red} \subset X$ (i.e. Chevalley's theorem implies that a scheme is affine if and only if the associated reduced scheme is affine).

Given a variety with a flag of closed subschemes one can iteratively complete and localize at the flag. This is captured by the following algorithmic definition.

\begin{definition}\label{defi:(m)}
Let $X$ be a Noetherian $k$-scheme and $A$ a $k$-algebra. Given a flag of closed subschemes
$\xi \colon X = Z_n \supset Z_{n-1} \supset \cdots \supset Z_0 \supset Z_{-1} = \emptyset$,
with $Z_i$ of pure dimension $i$, we define a collection of schemes $X^{(i)}$ for $i = -1,\dots,n-1$ by running the following recursive algorithm:
\begin{itemize}
\item[(a)] $X^{(-1)} = X_A = X \times_k \Spec A$,
\item[(b)] $Z_j^{(k)} = X_j^{(k)} \times_{X} Z_j$,
\item[(c)] $X^{(k)} = \Cb_{Z_k^{(k-1)}} (X^{(k-1)}) \setminus Z_k^{(k-1)}$.
\end{itemize}
\end{definition}

We need to verify that this algorithm is well-defined, by checking that the affineness condition of Definition \ref{defi:comp} is satisfied whenever we perform step (c). This is the content of the following lemma.

\begin{lemma}
For $k \geq 0$ the schemes $Z_k^{(k-1)}$ are affine.
\end{lemma}

\begin{proof}
This statement is established by induction on $k$, where the base case $k=0$ is clear, since $X^{(0)} = \Cb_{Z_0^{-1}} X^{(-1)}$ is defined to be the completion of $X^{(-1)} = X \times_k \Spec A$ at the affine scheme $Z_0 \times_k A$. Here we used that $Z_0$ was assumed to be zero-dimensional, and therefore is automatically affine.

Let us assume that the assertion is known for all $k \geq 0$, which satisfy $k \leq m$. We will show that it also holds for $k = m+1$. By definition we have
$$Z_{m+1}^{(m)} = X^{(m)} \times_ X Z_{m+1}.$$
Since $Z_{m+1} \hookrightarrow X$ is a closed immersion (hence in particular affine), we see that the projection map $Z_{m+1}^{(m)} = X^{(m)} \times_ X Z_{m+1} \to X^{(m)}$ is also a closed immersion (and therefore affine). Moreover, the scheme $X^{(m)}$ is constructed as the completion at the scheme $Z_m^{(m-1)}$, which we know to be affine by the induction hypothesis. This shows that $X^{(m)}$ is affine, and therefore that the closed subscheme $Z_{m+1}^{(m)}$ is affine too. This concludes the proof.
\end{proof}

\begin{rmk}\label{rmk:appendix}
Pullback along the natural morphism $(\Cb_{Z_{m+1}^{(m)}}X^{(m)},Z_{m+ 1}^{(m)}) \to (X^{(m)},Z_{m+1}^{(m)})$ of pairs induces an equivalence of derived categories of perfect complexes with support condition. In particular we have an equivalence of $K$-theory $\Kb_{\Cb_{Z_{m+1}^{(m)}}X^{(m)},Z_{m+ 1}^{(m)}} \simeq \Kb_{X^{(m)},Z_{m+1}^{(m)}}$. For $A$ a Noetherian ring this is a direct consequence of Theorem 2.6.3 in \cite{MR1106918}. The proof of the general case is deferred to Proposition \ref{prop:efimov} in the appendix.
\end{rmk}

We recall the following result from Thomason--Trobaugh \cite[Porism 2.7.1]{MR1106918}.

\begin{lemma}\label{lemma:push_perfect}
Let $X$ be a scheme of finite type over $k$, with a subscheme $Z$ finite over $k$ (in particular $\dim Z = 0$). For every $k$-algebra $A$, we denote by
$$\pi\colon X_A = X \times_k \Spec A \to \Spec A$$
the canonical projection. If $\F \in \Perf_{Z_A}(X_A)$, then $\pi_*\F$ is a perfect complex of $A$-modules.
\end{lemma}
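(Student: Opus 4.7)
Plan. My approach decomposes into three steps: reducing to the affine case, establishing finite Tor-amplitude, and establishing pseudo-coherence over $A$.

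First, I would reduce to the case that $X$ is affine. Since $Z$ is finite over $k$, it is zero-dimensional and consists of finitely many closed points (with some, possibly non-reduced, scheme structure). A standard prime-avoidance argument for schemes of finite type over a field shows that any such finite set of closed points admits an affine open neighbourhood $U \subset X$. Base changing to $A$, one has $Z_A \subset U_A \subset X_A$, and since $\F$ is set-theoretically supported on $Z_A$, one has $\pi_*\F \simeq (\pi|_{U_A})_*(\F|_{U_A})$. Replacing $X$ by $U$, I may therefore assume $X = \Spec R$ with $R$ a finitely generated $k$-algebra.

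Second, setting $R_A := R \otimes_k A$ and representing $\F$ by a bounded complex $P^\bullet$ of finitely generated projective $R_A$-modules, I would verify that $\pi_*\F = P^\bullet$ (viewed as a complex of $A$-modules) has finite Tor-amplitude over $A$. This is because $R$ is a $k$-vector space, hence free, so $R_A$ is $A$-flat, and therefore each $P^i$ is $A$-flat. To control the cohomology, let $I \subset R$ be the defining ideal of $Z$; then $R/I$ is finite-dimensional over $k$, and by Noetherianity of $R$ together with finite generation of $I^n/I^{n+1}$ over $R/I$, each $R/I^n$ is finite-dimensional over $k$. The cohomology modules $H^j(P^\bullet)$ are finitely generated over $R_A$ and set-theoretically supported on $V(I \otimes_k A)$, hence annihilated by some power $I_A^n$; they are therefore finitely generated modules over the finitely generated free $A$-algebra $R_A/I_A^n = (R/I^n) \otimes_k A$, and in particular finitely generated over $A$.

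The main obstacle is to promote \emph{finite Tor-amplitude together with finitely generated cohomology over $A$} to \emph{perfectness} of $\pi_*\F$, i.e.\ to pseudo-coherence, which is not automatic since $A$ is not assumed Noetherian. To finish, I would use a d\'evissage along the $I_A$-adic filtration: choosing $n$ large enough that $I_A^n$ annihilates all the $H^j(P^\bullet)$, one shows via a Koszul-type generator argument that $\pi_*\F$ lies in the thick subcategory of $\Perf(A)$ generated by perfect complexes of $R_A/I_A^n$-modules obtained from $P^\bullet$ via the filtration; each such generator is perfect over $A$ because $R_A/I_A^n$ is a finitely generated free $A$-algebra, so every finitely generated projective $R_A/I_A^n$-module is finitely generated projective over $A$. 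The naturality of this construction in the flat base change $k \to A$ then upgrades the classical case $A = k$ (where $R_A/I_A^n$ is a finite-dimensional $k$-algebra and everything is manifest) to arbitrary $A$, recovering Thomason--Trobaugh's porism.
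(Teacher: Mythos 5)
The paper's proof is a short citation: it verifies the hypotheses of Thomason--Trobaugh's Porism 2.7.1 ($\pi$ finitely presented, $\pi|_{Z_A}$ proper, $\pi|_{U_A}$ flat) and invokes the result. Your direct argument has the right overall shape (reduce to the affine case, note $A$-flatness of $R_A$, reduce to a finiteness statement coming from $Z$ being finite over $k$), but it contains a genuine gap. You assert that each $H^j(P^\bullet)$ is finitely generated over $R_A$ and use this to obtain a power $I_A^n$ annihilating the cohomology. Over the possibly non-Noetherian ring $R_A$ this claim is unjustified, and in general false: already for $[S \xrightarrow{a} S]$ one has $H^{-1} = \mathrm{ann}(a)$, which can fail to be finitely generated when $S$ is not coherent. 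The fact you need (some $I_A^n$ kills the cohomology) is nevertheless true, but for a different reason: $\Perf_{V(I_A)}(R_A)$ is the thick subcategory of $D(R_A)$ generated by the Koszul complex $K(f_1,\dots,f_r)$ on generators of $I$, the property ``cohomology killed by some power of $I_A$'' is stable under shifts, cones, and retracts, and it holds for $K(f_1,\dots,f_r)$ because each $f_i$ acts null-homotopically on it.

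Your closing dévissage is also too vague to verify. ``Perfect complexes of $R_A/I_A^n$-modules obtained from $P^\bullet$ via the filtration'' does not name an actual construction: the $I_A$-adic filtration of a bounded complex of projective $R_A$-modules does not produce subquotients over $R_A/I_A^n$ that represent $\pi_*\F$, and $P^\bullet$ is generally not pushed forward from a nilpotent thickening of $Z_A$ (the Koszul complex itself is the basic counterexample). The mechanism that does work, and which your last sentence about ``naturality in flat base change'' gestures towards, is precisely the thick-subcategory generation above: restriction of scalars $D(R_A) \to D(A)$ is triangulated, so the preimage of $\Perf(A)$ is thick; it contains $K(f_1,\dots,f_r)$ because that complex is the flat base change $K_R(f_1,\dots,f_r) \otimes_k A$ of the Koszul complex over $R$, and $K_R$ has cohomology that is finitely generated over $R/I$ (by Noetherianity of $R$), hence finite-dimensional over $k$ (since $Z$ is finite over $k$), so that $K_R$ is quasi-isomorphic to a bounded complex of finite-dimensional $k$-vector spaces, whose base change to $A$ lies in $\Perf(A)$. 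With this repair your direct argument does recover the porism; as written, it has a hole exactly where you anticipated the difficulty.
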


\begin{proof}
This is a special case of Porism 2.7.1 in \cite{MR1106918}. Up to change of notation, the latter considers a finitely presented map $h\colon X \to W$, a quasi-compact open subset $U \subset X$, which is the complement of a closed immersion $Z \to X$, such that $h|_Z$ is proper, and $h|_U$ is flat. Under these assumptions it is shown that the pushforward $h_*\F$ of a perfect complex $\F$ supported on $|Z|$, is perfect.

In order to apply this result, one observes that a morphism of finite type over a field is finitely presented. Moreover, being of finite presentation, flat, or proper, is a notion invariant under base change. Since every finite morphism is in particular proper, all the conditions of the porism cited above are met.
\end{proof}

We are now in a position to state the main result of this section. At first we need to introduce some notation. We denote by $\partial_m$ the morphism of $K$-theory spectra
$$\partial_m\colon \Omega^m\Kb_{X^{(m)},Z_{m+1}^{(m)}} \to \Omega^{m-1} \Kb_{X^{(m-1)},Z_{m}^{(m-1)}},$$ obtained as the boundary map of the bi-cartesian square
\begin{equation}\label{f_sq1}
\xymatrix{
\Kb_{X^{(m-1)},Z_{m}^{(m-1)}} \ar[r] \ar[d] & \Kb_{\Cb_{Z_m^{(m-1)}}X^{(m-1)},Z_{m+1}^{(m-1)}} \ar[d] \\
\bullet \ar[r] & \Kb_{X^{(m)},Z_{m+1}^{(m)}},
}
\end{equation}
where we have used the equivalence $\Kb_{\Cb_{Z_{m}^{(m-1)}}X^{(m-1)},Z_{m}^{(m-1)}} \simeq \Kb_{X^{(m-1)},Z_{m}^{(m-1)}}$ of Remark \ref{rmk:appendix}. The morphisms in the bi-cartesian square above are induced by the inclusion maps between the respective pairs of schemes.

\begin{definition}[Preliminary Contou-Carr\`ere symbol]\label{def:prelimCCsymbol}
Let $X$ be a Noetherian $k$-scheme, and $\xi$ a saturated flag of closed subschemes $Z_i$. For every $k$-algebra $A$, we have a projection $\pi\colon X_A \rightarrow \Spec A$. The pushforward $\pi_*$ sends $\Perf_{Z_A}(X_A)$ to $\Perf(A)$. Hence, we have a well-defined map
$$\pi_*\circ \partial_1 \circ \cdots \circ \partial_n\colon \Omega^n\Kb_{A_{X,\xi}} \to \Kb_A.$$
We call this the \emph{preliminary Contou-Carr\`ere symbol} $\sigma_{X,\xi}^A.$
\end{definition}


\section{Spectral Extensions and Higher commutators}\label{extensions}

In this section we introduce the notion of a central extension of a group by a spectrum. We then define a generalization of the commutator pairing for such \emph{spectral extensions} and relate it to Loday's Steinberg symbols in algebraic $K$-theory.

\subsection{Classical central extensions}

\subsubsection{Central Extensions}\label{subsub:central}

Let $A$ be an abelian group. A \emph{central extension of $G$ by $A$}, denoted $e$, is a short exact sequence
\begin{equation}\label{eq:central}
1 \to A \to^{\iota} E  \to^{p} G \to 1,
\end{equation}
such that $\iota(A) \subset Z(E)$, where $Z(E) \subset E$ denotes the centre of $E$.

\begin{definition}\label{defi:star}
We denote by $P_n(G)$ the set of $n$-tuples of pairwise commuting elements
 $$\{(g_1,\dots,g_n) \in G^n~|~g_ig_j = g_jg_i\text{ } \forall \text{ } 1\leq i,j\leq n\}.$$
\end{definition}

Given $(f,g) \in P_2(G)$, let $\widetilde{f}$, $\widetilde{g}$ be elements in $p^{-1}(f)$, respectively $p^{-1}(g)$. Since $p(\widetilde{f}\widetilde{g}\widetilde{f}^{-1}\widetilde{g}^{-1}) = 1$, we see that the commutator $[\widetilde{f},\widetilde{g}] = \widetilde{f}\widetilde{g}\widetilde{f}^{-1}\widetilde{g}^{-1}$ defines an element in $A$. Because $A$ is central in $E$, a simple computation shows that this element is independent of the choice of liftings.
\begin{definition}
Let $e$ be a central extension of $G$ by $A$. We denote by  $\star_{e}\colon P_2(G) \to A$ the function $(f,g) \mapsto \iota^{-1}[\widetilde{f},\widetilde{g}]$.
\end{definition}

Short calculations \cite[Exercise IV.3.8(a)]{BrownCohGrp} show that $\star$ is bi-multiplicative and anti-symmetric. For the convenience of the reader we include a proof.
\begin{lemma}\label{lemma:star_basic}
For $(g_1,g_2,h) \in P_3(G)$, we have the following relations:
\begin{itemize}
	\item[\emph{(a)}] $(g_1 \star_{e} h)\cdot (g_2 \star_{e} h) = g_1g_2 \star_{e} h$,
	\item[\emph{(b)}] $(g_1 \star_{e} g_2)^{-1} = g_2 \star_{e} g_1$.
\end{itemize}
\end{lemma}
\begin{proof}
The first identity can be established by the following computation:
$$(g_1 \star_{e} h)\cdot (g_2 \star_{e} h) = (\widetilde{g}_1\widetilde{h}\widetilde{g}_1^{-1}\widetilde{h}^{-1})\cdot{} (\widetilde{g}_2\widetilde{h}\widetilde{g}_2^{-1}\widetilde{h}^{-1}) = \widetilde{g}_1(\widetilde{g}_2\widetilde{h}\widetilde{g}_2^{-1}\widetilde{h}^{-1})\widetilde{h}\widetilde{g}_1^{-1}\widetilde{h}^{-1} = \widetilde{g}_1\widetilde{g}_2\widetilde{h}\widetilde{g}_2^{-1}\widetilde{g}_1^{-1}\widetilde{h}^{-1} = (g_1g_2) \star_e h,$$
where in the second equality sign we used that $(\widetilde{g}_2\widetilde{h}\widetilde{g}_2^{-1}\widetilde{h}^{-1})$ belongs to the centre of $G$.
The second identity follows from
$$({g_1}\star_e\widetilde{g}_2)^{-1}= (\widetilde{g}_1\widetilde{g}_2\widetilde{g}_1^{-1}\widetilde{g}_2^{-1})^{-1} = \widetilde{g}_2\widetilde{g}_1\widetilde{g}_2^{-1}\widetilde{g}_1^{-1} = g_2\star_e g_1.$$
This concludes the proof of the lemma.
\end{proof}

A central extension $e$ as in \eqref{eq:central} corresponds to a monoidal map from $G$ to the groupoid $BA$ (that is, the groupoid of $A$-torsors with the natural symmetric monoidal structure). To see this directly, one observes that every fibre $p^{-1}(g) \subset E$ has the structure of an $A$-torsor. Moreover, we have a natural isomorphism $p^{-1}(gh) \cong p^{-1}(g) \otimes_A p^{-1}(h)$ for every pair $(g,h) \in G^2$. Thus, \eqref{eq:central} gives rise to a map of monoidal groupoids
\begin{equation}\label{eq:GBA}
\phi\colon G \to BA,
\end{equation}
where $G$ is viewed as a discrete groupoid with monoidal structure given by the group operation, and $BA$ denotes the classifying groupoid of $A$-torsors. The following interpretation of the commutator pairing is well-known.

\begin{lemma}
For $(f,g) \in P_2(G)$ we have that $f \star g$ corresponds to the automorphism in $BA$ obtained from the following chain of morphisms
$$\phi(fg) \cong \phi(f)\phi(g) \cong \phi(g)\phi(f) \cong \phi(gf) = \phi(fg).$$
\end{lemma}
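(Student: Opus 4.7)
The plan is to make both the monoidal functor $\phi$ and the three constraint isomorphisms of the chain entirely explicit in terms of lifts in the extension $E$, and then to verify that the resulting automorphism of the $A$-torsor $\phi(fg)$ is multiplication by the commutator $[\widetilde{f}, \widetilde{g}] \in \iota(A)$.

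First I would unpack $\phi$: for each $g \in G$ the fiber $\phi(g) := p^{-1}(g)$ is a principal $A$-torsor via $\iota$, and the monoidal constraint $\mu_{f,g}\colon \phi(f)\otimes_A \phi(g) \xrightarrow{\cong} \phi(fg)$ is induced by multiplication in $E$, sending a pure tensor $\widetilde{f}\otimes\widetilde{g}$ to the product $\widetilde{f}\widetilde{g}$. This map is well-defined and $A$-equivariant precisely because $\iota(A) \subseteq Z(E)$. The symmetry constraint on the monoidal groupoid $BA$ of $A$-torsors is the usual tensor swap, because $A$ is abelian.

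Next I would chase an element. Fix lifts $\widetilde{f}, \widetilde{g}$ and consider $\widetilde{f}\widetilde{g} \in \phi(fg)$. Applying $\mu_{f,g}^{-1}$ yields $\widetilde{f}\otimes \widetilde{g}$; applying the swap yields $\widetilde{g}\otimes \widetilde{f}$; applying $\mu_{g,f}$ yields $\widetilde{g}\widetilde{f}$; and the equality $fg = gf$ identifies $\phi(gf) = \phi(fg)$, so this element is re-read as $\widetilde{g}\widetilde{f} \in \phi(fg)$. The full chain therefore realizes the $A$-torsor automorphism of $\phi(fg)$ determined by $\widetilde{f}\widetilde{g} \mapsto \widetilde{g}\widetilde{f}$. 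Since $\phi(fg)$ is an $A$-torsor, this automorphism corresponds to the unique $a \in A$ with $\iota(a)\cdot \widetilde{f}\widetilde{g} = \widetilde{g}\widetilde{f}$, i.e.\ to $\iota(a) = \widetilde{g}\widetilde{f}\widetilde{g}^{-1}\widetilde{f}^{-1}$. Up to the convention of left vs.\ right action, this is the commutator $[\widetilde{f}, \widetilde{g}]$, which by definition equals $\iota(f \star_e g)$.

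I expect no conceptual obstacle beyond careful bookkeeping of left-versus-right torsor actions and the direction of the monoidal constraints; the one real subtlety is the overall sign convention, which amounts at most to replacing $f \star_e g$ by its inverse and is precisely why the statement reads \emph{corresponds to} rather than \emph{equals}. Independence of the choice of lifts is then automatic, since the commutator $[\widetilde{f}, \widetilde{g}]$ was already observed to be independent of the lifts in the discussion preceding the definition of $\star_e$.
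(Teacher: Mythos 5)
Your proof is correct and follows essentially the same approach as the paper: identify $\phi(g)$ with $p^{-1}(g)$ as an $A$-torsor, make the monoidal constraint $\mu_{f,g}(\widetilde{f}\otimes\widetilde{g})=\widetilde{f}\widetilde{g}$ explicit, and chase $\widetilde{f}\widetilde{g}$ through the chain to conclude the automorphism is multiplication by the commutator. Your remark that the residual ambiguity is at most an inverse, accounted for by the word ``corresponds,'' matches the paper's phrasing.
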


\begin{proof}
Choosing lifts $\widetilde{f}$ of $f$ and $\widetilde{g}$ of $g$, we can express the torsors $\phi(f)$ as $A\cdot \widetilde{f}$ and $\phi(g)$ as $A \cdot \widetilde{g}$. We can also write
$$\phi(fg) \cong A\cdot \widetilde{f}\widetilde{g} \cong \phi(f) \otimes_A \phi(g).$$
The symmetry constraint of $\otimes_A$ induces an isomorphism with $A \cdot \widetilde{g}\widetilde{f}$, which sends $\widetilde{f}\widetilde{g}$ to $[\widetilde{f},\widetilde{g}]\widetilde{g}\widetilde{f}$.

Using the identification $\phi(g) \phi(f) \cong \phi(gf)=\phi(fg)$ the element $\widetilde{g}\widetilde{f}$ is sent to $\widetilde{f}\widetilde{g}$. We conclude that the resulting automorphism of the torsor $\phi(fg) \cong A \cdot \widetilde{f}\widetilde{g}$ sends the element $\widetilde{f}\widetilde{g}$ to $[\widetilde{f},\widetilde{g}]\widetilde{f}\widetilde{g}$. Therefore, it corresponds to the commutator pairing $f \star g$.
\end{proof}

\subsubsection{Cohomological Reformulation}

The map \eqref{eq:GBA} is the looping of a map of pointed spaces
$$e\colon (BG,*) \to (B^2A,*).$$
Since the target is equivalent to an Eilenberg--Mac Lane space $B^2A \cong K(A,2)$ (as unpointed spaces), homotopy classes of (unpointed) maps $BG\to B^2A$ agree with $H^2(BG,A) = H^2_{\textit{grp}}(G,A)$. We denote the element in this cohomology group resulting from $e$ by $[e]$.

If $G$ is an abelian group, then the group homology $H_*(BG,\mathbb{Z}) = H_*^{\textit{grp}}(G,\mathbb{Z})$ carries a natural graded commutative ring structure. Topologically this follows from $BG$ inheriting a group structure from the commutative group $G$, endowing it with the structure of an $H$-group. Algebraically, this fact can be explained in terms of the \emph{shuffle product} on the normalized bar complex. In the remark below we recall its definition.
\begin{rmk}\label{rmk:shuffle}
Recall that the $\mathbb{Z}G$-module $B_k$  is defined to be the free module on symbols $(g_1|\dots|g_k)$, where the $g_i$ are pairwise distinct elements of the group $G$. Using that $G$ is abelian, we define
$$(g_1|\dots|g_k)\circ(g_{k+1}|\dots|g_{k+l}) = \sum_{\sigma}(-1)^{\sigma} (g_{\sigma^{-1}(1)}|\dots|g_{\sigma^{-1}(k+l)}),$$
where $\sigma$ runs over all permutations of $\{1,\dots,k+l\}$ satisfying $\sigma(1) \leq \dots \leq \sigma(k)$ and $\sigma(k+1) \leq \dots \leq \sigma(k+l)$ (so-called \emph{shuffles}). Extending $\mathbb{Z}G$-linearly, the shuffle product endows $\bigoplus_k B_k$ with the structure of a commutative dg-algebra.
\end{rmk}

This graded commutative ring structure brings us to the following definition.

\begin{definition}\label{defi:abstract_commutator}
Let $G$ be an arbitrary group. Given $(g_1,\dots,g_n) \in P_n(G)$, we denote by $\phi\colon\mathbb{Z}^n \to G$ the corresponding morphism of groups, sending the standard vector $e_i$ to $g_i$. Let $c$ denote $(e_1 \circ \dots \circ e_n)$ as in Remark \ref{rmk:shuffle}. We set $(g_1\circ \dots \circ g_n) := \phi_*(c).$
\end{definition}

The class in $H_2^{\textit{grp}}(G,\mathbb{Z})$ corresponding to the cycle $(f \circ g)$ should be understood as an \emph{abstract commutator}. A pair $(f,g)$ of commuting elements induces a map $\mathbb{T}^2 = B\mathbb{Z}^2 \to BG$. Topologically speaking, the cycle $(f\circ g)$ is obtained by pushforward of the fundamental class of the torus $B\mathbb{Z}^2$ to $BG$.

The following lemma is standard (e.g. it is an immediate consequence of \cite[Exercise IV.3.8.(b,c)]{BrownCohGrp} combined with \cite[Theorem V.6.4(iii)]{BrownCohGrp}).
\begin{lemma}\label{lemma:cap}
Let $\langle -,-\rangle$ denote the natural pairing between group cohomology and homology. Given a central extension $e$ of $G$ by $A$, corresponding to the class $[e] \in H^2_{\textit{grp}}(G,A)$, we have for all $(f,g) \in P_2(G)$ the identity
$$f \star_{e} g = \langle [e], (f \circ g) \rangle.$$
\end{lemma}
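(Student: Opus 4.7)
The plan is to reduce the identity to the universal case $G = \mathbb{Z}^2$ by naturality, and then verify it via a direct $2$-cocycle calculation.

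Given $(f,g) \in P_2(G)$, Definition \ref{defi:abstract_commutator} provides a group homomorphism $\phi\colon \mathbb{Z}^2 \to G$ with $\phi(e_1) = f$, $\phi(e_2) = g$, and by definition $(f \circ g) = \phi_{\ast}(e_1 \circ e_2)$. I first verify that both sides of the claimed identity are natural under pullback of the extension along $\phi$. On the left, any lifts of $e_1,e_2$ to the pullback extension $\phi^{\ast} e$ (which sits inside $E \times_G \mathbb{Z}^2$) map to lifts of $f,g$ in $E$, hence $e_1 \star_{\phi^{\ast} e} e_2 = f \star_e g$. On the right, the cohomology/homology pairing satisfies $\langle [e], \phi_{\ast}(e_1 \circ e_2)\rangle = \langle \phi^{\ast}[e], (e_1 \circ e_2)\rangle$ by adjunction. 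It therefore suffices to prove the identity in the universal case $G = \mathbb{Z}^2$ with $(f,g) = (e_1, e_2)$.

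In this case I choose a normalized $2$-cocycle $c\colon \mathbb{Z}^2 \times \mathbb{Z}^2 \to A$ representing $[e]$, and model $E$ as the set $\mathbb{Z}^2 \times A$ with multiplication $(x,a)(y,b) = (x+y,\,a+b+c(x,y))$. Taking the tautological lifts $\widetilde{e_i} = (e_i, 0)$, a direct computation gives $\widetilde{e_1}\widetilde{e_2} = (e_1 + e_2,\, c(e_1,e_2))$ and $\widetilde{e_2}\widetilde{e_1} = (e_1 + e_2,\, c(e_2,e_1))$. Since the underlying group components agree, inverting and multiplying yields
\[
[\widetilde{e_1}, \widetilde{e_2}] = \iota\bigl(c(e_1,e_2) - c(e_2,e_1)\bigr),
\]
so that $e_1 \star_e e_2 = c(e_1,e_2) - c(e_2,e_1)$. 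On the other hand, expanding the shuffle product from Remark \ref{rmk:shuffle} in the case $k=l=1$ gives $(e_1 \circ e_2) = [e_1|e_2] - [e_2|e_1]$ in the normalized bar complex of $\mathbb{Z}^2$, and the pairing of this cycle with $c$ returns exactly $c(e_1,e_2) - c(e_2,e_1)$. The two sides therefore coincide.

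The only obstacle is bookkeeping: one has to be careful about sign conventions in the shuffle product formula and about order-of-multiplication when inverting in $E$. Once the reduction to $\mathbb{Z}^2$ is in hand, the remainder is mechanical, and the resulting agreement admits the geometric interpretation that the pullback of $[e]$ to $B\mathbb{Z}^2 \cong T^2$ evaluated on the fundamental class of the torus equals the commutator of any chosen lifts of the two generators.
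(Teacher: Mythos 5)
Your argument is correct. Note, though, that the paper itself offers no proof of Lemma \ref{lemma:cap}: it is stated as a known fact (the paper only proves the preceding lemma, which interprets $f\star_e g$ as the automorphism $\phi(fg)\cong\phi(f)\phi(g)\cong\phi(g)\phi(f)\cong\phi(gf)=\phi(fg)$ of the associated torsor), so there is no in-text proof to compare with; your write-up supplies the missing verification. Your route --- reduce to $G=\mathbb{Z}^2$ by naturality of both sides under $\phi\colon\mathbb{Z}^2\to G$, then compute with the explicit model $E=\mathbb{Z}^2\times A$, $(x,a)(y,b)=(x+y,a+b+c(x,y))$ --- is the standard cocycle proof, and the two key computations check out: $[\widetilde{e_1},\widetilde{e_2}]=\iota\bigl(c(e_1,e_2)-c(e_2,e_1)\bigr)$ on the extension side, and $(e_1\circ e_2)=(e_1|e_2)-(e_2|e_1)$ from the shuffle product of Remark \ref{rmk:shuffle}, whose pairing with $c$ gives the same element. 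The only step you leave implicit is the compatibility $\phi^{\ast}[e]=[\phi^{\ast}e]$, needed to pass from $\langle\phi^{\ast}[e],(e_1\circ e_2)\rangle$ to the commutator in the pulled-back extension; this is standard (the pullback extension is classified by the composite $B\mathbb{Z}^2\to BG\to B^2A$), but it is worth one sentence. An alternative, more in the spirit of the paper, would be to deduce the identity from the torsor-automorphism lemma together with the identification of $(f\circ g)$ as the pushforward of the fundamental class of $B\mathbb{Z}^2\cong\mathbb{T}^2$ --- essentially the geometric remark you close with --- which is also the picture the paper later generalizes in its treatment of spectral extensions; your cocycle computation buys a completely elementary, convention-explicit verification instead.
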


The following definition illustrates the flexibility of the cohomological viewpoint on commutators. We use the notation $K(A,k)$ to denote an unpointed space, which represents the (co-)functor $H^k(-,A)$ valued in abelian groups.

\begin{definition}\label{defi:highercommutators}
A \emph{higher central extension} of $G$ by $B^k A = K(A,k)$ is an element $[e]$ of $H^{k+2}_{\textit{grp}}(G,A)$. Given $(g_1,\dots,g_{k+2}) \in P_{k+2}(G)$ we define $$g_1 \star_e \dots \star_e g_{k+2} = \langle [e], (g_1 \circ \dots \circ g_{k+2}) \rangle.$$
\end{definition}

In the following subsection we will formally generalize this definition to include central extensions by arbitrary spectra, not just those of Eilenberg--Mac Lane type.

\subsection{Spectral Extensions}

\subsubsection{Stable $\infty$-categories and spectra}\label{subsub:spectra}

A fundamental example of a stable $\infty$-category (see \ref{app:stable}) is given by the stable $\infty$-category $\Sp$ of \emph{spectra}, which is defined to be the limit
$$\Sp:=\invlim[\Spaces_{\bullet} \xleftarrow{\Omega} \Spaces_{\bullet} \xleftarrow{\Omega} \cdots].$$
where $\Spaces_{*}$ denotes the category of pointed spaces, and $\Omega$ denotes the pointed loop space functor.

Every pointed space $\Xc=(X,x_0)$ gives rise to a spectrum, denoted by $\Sigma^{\infty}\Xc$. The infinite suspension functor $\Sigma^\infty$ has a right adjoint
\[
\Sigma^{\infty}\colon \Spaces_{\bullet} \to \Sp \text{,} \qquad \text{namely} \qquad \Omega^{\infty}\colon \Sp \to \Spaces_{\bullet}.
\]
The latter functor is equivalent to the projection to the first component
$$\invlim[\Spaces_{*} \xleftarrow{\Omega} \Spaces_{*} \xleftarrow{\Omega} \cdots] \to \Spaces_{*}.$$
There is an array of functors to the category of abelian groups
$(\pi_i)_{i \in \mathbb{Z}}\colon\Sp \to \mathrm{Ab},$
inducing a $t$-structure on $\Sp$ with heart $\Sp^{\heartsuit} = \{X \in \Sp|\pi_i(X) = 0 \text{ for } i \neq 0\}  \cong \mathrm{Ab}$.

The subcategory $\Sp_{[0,1]} = \{X \in \Sp|\pi_i(X) = 0 \text{ for } i \neq 0,1\}$ is equivalent to the $2$-category of \emph{Picard groupoids} (that is, group-like symmetric monoidal groupoids). More generally, the $\infty$-category of connective spectra $\Sp_{\geq} = \{X \in \Sp|\pi_i(X) = 0 \text{ for } i \leq -1\}$ is equivalent to the $\infty$-category of Segal's $\Gamma$-spaces (i.e. Picard $\infty$-groupoids, or equivalently, \emph{infinite loop spaces}).

The behaviour of the $\infty$-category of spectra with respect to this $t$-structure reveals a remarkable similarity with the derived category $D(\mathbb{Z})$ of abelian groups. This time we have \emph{homology groups}
$$H_i\colon D(\mathbb{Z}) \to \mathrm{Ab},$$
inducing a $t$-structure on $D(\mathbb{Z})$. Again, the heart $D(\mathbb{Z})^{\heartsuit}$ is equivalent to the category of abelian groups. Chain complexes in $D(\mathbb{Z})_{[0,1]}$, i.e. those concentrated in degree $0$ and $1$, are, according to a theorem of Deligne, equivalent to \emph{strictly commutative Picard groupoids}. The Dold-Kan correspondence asserts that objects in $D(\mathbb{Z})_{\geq 0}$ correspond to \emph{simplicial abelian groups}.

It seems therefore appropriate to think of \emph{spectra} as another generalization of abelian groups. The derived category of abelian groups serves a similar purpose, but working with spectra corresponds to only stipulating a \emph{weak commutativity law}, which allows spectra to capture phenomena which could not be seen in the strict framework of chain complexes of abelian groups.

\subsubsection{Generalized Group Cohomology}

For every spectrum $\mathbb{E}$, we have an associated \emph{generalized cohomology theory} denoted by
$$H^i(-,\mathbb{E})\colon \Spaces \to \mathrm{Ab}.$$
We define generalized group cohomology to be $H^i_{\textit{grp}}(G,\mathbb{E}) = H^i(BG,\mathbb{E})$.

\begin{definition}\label{defi:spectral_extension}
A \emph{spectral extension} of $G$ by $\mathbb{E}$ is a class $[e] \in H^2_{\textit{grp}}(G,\mathbb{E})$.
\end{definition}

Every abelian group $A$ can be viewed as a spectrum $HA$ by means of the Eilenberg--Mac Lane construction. Forgetting base points, we have an equivalence of unpointed spaces $\Omega^{\infty}\Sigma^k HA \simeq B^kA$.

A higher central extension of $G$ by $B^kA$ in the sense of Definition \ref{defi:highercommutators}, is given by an element of $H_{\textit{grp}}^{k+2}(G,A) = H_{\textit{grp}}^2(G,\Sigma^kHA)$. By the discussion above, we can therefore say that a higher central extension of $G$ by $B^kA$ is the same thing as a spectral extension of $G$ by the $k$-fold suspension spectrum $\Sigma^k HA$.

\begin{rmk}
    The definition of a spectral extension given in Definition \ref{defi:spectral_extension} introduces only the cocycle (up to equivalence) of what should be a \emph{central extension by a spectrum}. Without doubt it would be possible to give a definition along the lines of Paragraph \ref{subsub:central}. However, spelling out such a definition would certainly be more cumbersome than the shortcut used in Definition \ref{defi:spectral_extension}, which is exactly the viewpoint we need to study higher commutators in the next paragraph.
\end{rmk}

\subsection{The case of spectral extensions}

\subsubsection{Basic definitions}

Our definition of higher commutators for spectral extensions hinges on three $\infty$-categories whose objects belong to the canon of classical homotopy theory. These $\infty$-categories have already made an appearance earlier in the paper.

\begin{itemize}
\item[(a)] The $\infty$-category of unpointed spaces $\Spaces$, as defined in \cite[Definition 1.2.16.1]{Lurie:bh}. 
\item[(b)] The $\infty$-category of pointed spaces will be referred to as $\Spaces_{\bullet}$ (see \cite[Notation 1.4.2.5]{Lurie:ha}).
\item[(c)] The stable $\infty$-category of spectra $\Spectra$ (see \cite[Definition 1.4.3.1]{Lurie:ha}).
\end{itemize}

All three $\infty$-categories happen to be generated under small colimits by a single object.
Spaces $\Spaces$ are generated by the singleton $\{\bullet\}$ (see \cite[Theorem 5.1.5.6]{Lurie:bh} applied to $S$ being the $\infty$-category consisting of a single object and only the identity morphism), pointed spaces $\Spaces_{\bullet}$ by a pointed space with two elements $(S^0,x_0)$ (combine the aforementioned result, and \cite[Proposition 4.8.2.11]{Lurie:ha}), and $\Spectra$ is generated by the sphere spectrum $\Sb$ (see \cite[Corollary 1.4.4.6]{Lurie:ha}).
Furthermore, these $\infty$-categories do not just exist in isolation from each other, but are related by a chain of functors.

\begin{itemize}
\item[(d)] The functor $(-)_+\colon \Spaces \to \Spaces_{\bullet}$ is well-defined (up to a contractible space of choices) by the fact that it commutes with small colimits and sends the singleton space $\{\bullet\}$ to a pointed space with two elements $(S^0,x_0)$. Informally speaking it assigns to a space $X$ the pointed space $X_+$ obtained as the disjoint union $X \sqcup \{x_0\}$ with base point $x_0$.
\item[(e)] The infinite suspension functor $\Sigma^{\infty}\colon \Spaces_{\bullet} \to \Spectra$ is well-defined (up to a contractible space of choices) by stipulating that it commutes with small colimits, and sends $(S^0,x_0)$ to the sphere spectrum $\Sb$.
\end{itemize}

\begin{definition}
The composition of $(-)_+$ and $\Sigma^{\infty}$ will be denoted by $\Sigma^{\infty}_+$.
\end{definition}

The fact that $\Spaces$, $\Spaces_{\bullet}$ and $\Spectra$ are generated by one object is not only convenient for defining functors between them, but also implies directly that they are presentable $\infty$-categories (see \cite[Theorem 5.5.1.1(6)]{Lurie:bh}). In \cite[\S 4.8.2]{Lurie:ha}, a symmetric monoidal structure on $\Pr^L$ (the $\infty$-category of small presentable $\infty$-categories) is used to establish the existence of symmetric monoidal smash products on $\Spaces_{\bullet}$ and $\Spectra$, as well as compatibility between them. Just as one can talk of commutative algebra objects in a symmetric monoidal (1-)category, one can talk about analogous objects in a symmetric monoidal $\infty$-category.  These go by the name of $E_\infty$-rings or $E_\infty$-objects (see \cite{May} or \cite[\S 7]{Lurie:ha}). We encourage the reader to think of these as a higher homotopical analogue of commutative rings, or commutative DGAs.

\subsubsection{Short summary}

The functor $\Sigma^{\infty}_+\colon \Spaces \to \Spectra$ is symmetric monoidal with respect to the cartesian symmetric monoidal structure on unpointed spaces and the smash product of spectra $\otimes$. That is, for two unpointed spaces $X$, $Y$ we have $\Sigma^{\infty}_+(X \times Y) \simeq \Sigma^{\infty}_+ X \otimes \Sigma^{\infty}_+ Y$.

In particular this functor preserves $E_{\infty}$-objects. We conclude that $\Sigma^{\infty}_+B\mathbb{Z}$ is a commutative ring spectrum. This induces a graded commutative product structure on $\pi_*$ and allows one to define higher commutators. We will now describe all of this in more detail.

\subsubsection{Facts from modern homotopy theory}\label{ssub:notation_and_facts}

\begin{itemize}
\item[(f)] There exists a canonical symmetric monoidal structure on $\Spaces$, the \emph{cartesian symmetric monoidal structure} (see \cite[Sect. 2.4.3]{Lurie:ha}). We denote the corresponding symmetric monoidal $\infty$-category by $\Spaces_{\times}$. Up to a contractible space of choices it is well-defined by the fact that $\{\bullet\}$ is a unit, and the induced bi-functor $\times\colon \Spaces \times \Spaces \to \Spaces$ commutes in both variables with small colimits.
\item[(g)]  There exists a canonical symmetric monoidal structure on $\Spaces_{\bullet}$, we denote the symmetric monoidal category by $(\Spaces_{\bullet})_{\wedge}$. Up to a contractible space of choices it is well-defined by the property of having $(S^0,x_0)$ as a unit, and the induced bi-functor $\wedge\colon \Spaces_{\bullet} \times \Spaces_{\bullet} \to \Spaces_{\bullet}$ commuting with small colimits in both variables (see \cite[Remark 4.8.2.11]{Lurie:ha}).
\item[(h)] There exists a canonical symmetric monoidal structure on $\Spectra$, we denote the symmetric monoidal category by $(\Spectra)_{\otimes}$. Up to a contractible space of choices it is well-defined by the property of having the sphere spectrum $\Sb$ as a unit, and the induced bi-functor $\otimes\colon \Spectra \times \Spectra \to \Spectra$ commuting with small colimits in both variables (see \cite[Corollary 4.8.2.19]{Lurie:ha}).
\item[(i)] The functors $(-)_+$ and $\Sigma^{\infty}$ have a natural symmetric monoidal structure (well-defined up to a contractible space of choices). In particular we have equivalences (well-defined up to a contractible space)
\begin{equation}\label{eqn:iso1}X_+ \wedge Y_+ \simeq (X \times Y)_+
\end{equation}
 for $X,Y \in \Spaces$ and
\begin{equation}\label{eqn:iso2}
\Sigma^{\infty}\Xc \otimes \Sigma^{\infty}\mathcal{Y} \simeq \Sigma^{\infty}(\Xc \wedge \mathcal{Y})
\end{equation}
 for $\Xc, \mathcal{Y} \in \Spaces_{\bullet}$.
\end{itemize}

The facts (f)-(h) are well-known outside of the context of stable $\infty$-categories. After passing to homotopy categories, one recovers the classical concepts. In particular, the symmetric monoidal structure $\wedge$ on $\Spaces_{\bullet}$ may be thought of as the smash product of pointed spaces
$$(X,x_0) \wedge (Y,y_0) = \left((X \times Y)/(X \times \{y_0\} \cup \{x_0\} \times Y), (x_0,y_0)\right).$$
The advantage of the present approach is that it foregrounds the treatment of homotopy coherence, rather than having to build this after the fact for a particular space-level construction. We give a more detailed account of the proof of (i), since it is only implicit in \cite[\S 4.8.2]{Lurie:ha}.

\begin{proof}[Proof of (i)]
In \cite[Definition 4.8.2.8]{Lurie:ha} Lurie defines what it means for a small colimit preserving functor of presentable $\infty$-categories $\Spaces \to \C$ to realize $\C$ as an idempotent object. It is then shown (see \cite[Proposition 4.8.2.9]{Lurie:ha}) that there is an equivalence on the full subcategory of $\Fun(\Delta^1,\mathsf{Pr}^L)$ corresponding to such morphisms, and the over-category of presentable symmetric monoidal $\infty$-categories over $\Spaces_{\times}$.

In \cite[Proposition 4.8.2.11 \& 4.8.2.18]{Lurie:ha} it is shown that $\Spaces_{\bullet}$ and $\Spectra$ are naturally idempotent categories with respect to the canonical functors $\Spaces \to^{(-)_+} \Spaces_{\bullet}$ and $\Spaces \to^{\Sigma^{\infty}_+} \Spectra$. This is then used to deduce the existence of the symmetric monoidal structure mentioned in (g,h). The same observation implies that $(-)_+$ and $\Sigma^{\infty}$ are symmetric monoidal functors (see \cite[Proposition 4.8.2.7]{Lurie:ha}).
\end{proof}

Putting all of the facts recited above together, we obtain the following consequences.

\begin{corollary}\label{cor:Sigmaplus}
The composition of functors $\Sigma^{\infty}(-)_+ \circ \mathsf{forget}\colon \Spaces_{\bullet} \to \Spaces \to \Spectra$ is naturally equivalent to $\Sb \oplus \Sigma^{\infty}(-)$.
\end{corollary}

\begin{proof}
We begin by considering the category of pointed simplicial sets, which we will use as a model for the $\infty$-category of pointed spaces.
Let $\Xc= (X,x_0)$ be a pointed simplicial set. We use the notation $S^0$ to denote the pointed simplicial set corresponding to the $0$-sphere. We denote by $i\colon S^0 \to X_+$ the map of pointed simplicial sets which sends the base point of $S^0$ to the base point of $X_+$, and the unique non-base point of $S^0$ to $x_0 \in X$. Let $s\colon X_+ \to S^0$ be the unique left-inverse to this map in the category of pointed simplicial sets. We denote by $g\colon X_+ \to \Xc$ the unique map of pointed simplicial sets, such that $g|_X = \id_X$.
These maps belong to a natural commutative diagram of pointed simplicial sets
\[
\xymatrix{
 S^0 \ar[r]_i \ar[d] & X_+ \ar[d]^g \ar@/_0.7pc/@{-->}[l]_{s} \\
\bullet \ar[r] & \Xc.
}
\]
The dashed arrow refers to the well-defined retract in pointed simplicial sets. For every $X$ there is a unique retract, hence it is natural. Passing from model categories to $\infty$-categories (see \cite[A.2(2)]{Lurie:bh} we obtain a natural commutative diagram of functors taking values in $\Spaces_{\bullet}$.
\[
\xymatrix{
 S^0 \ar[r] \ar[d] & (-)_+\circ \mathsf{forget} \ar[d] \ar@/_0.7pc/@{-->}[l] \\
\bullet \ar[r] & \id.
}
\]
Furthermore we remark that this is a cofibre diagram in $\Spaces_{\bullet}$. Applying the functor $\Sigma^{\infty}$ (which has a right adjoint and hence preserves small colimits by \cite[Proposition 5.2.3.5]{Lurie:bh} ) we obtain a natural bi-cartesian diagram of $\Spectra$-valued functors $\Spaces_{\bullet} \to \Spectra$, with a canonical splitting
\[
\xymatrix{
\Sb \ar[r] \ar[d] & \Sigma^{\infty}(-)_+\circ \mathsf{forget} \ar[d] \ar@/_0.7pc/@{-->}[l] \\
0 \ar[r] & \Sigma^{\infty}.
}
\]
We conclude that there is a natural equivalence $\Sigma^{\infty}(-)_+\circ \mathsf{forget} \simeq \Sb \oplus \Sigma^{\infty}$ of $\Spectra$-valued functors.
\end{proof}

Specialising this to the pointed space $(S^1,1)$ we obtain the equivalence:

\begin{corollary}\label{cor:sbshift}
$\Sigma^{\infty}_+S^1 \simeq \Sb \oplus \Sigma\Sb$.
\end{corollary}

The following assertion lies at the heart of the definition of higher commutators for spectral extensions.

\begin{corollary}\label{cor:can_splitting}
The equivalence of Corollary \ref{cor:Sigmaplus} induces for a pointed space $\Xc = (X,x_0)$ a natural morphism and a natural left inverse thereof
$$\Sigma^{\infty}\Xc^{\wedge n}  \leftrightarrows \Sigma^{\infty}_+(X^n) .$$
\end{corollary}

\begin{proof}
We have recorded in (e) above that $\Sigma^{\infty}_+$ is symmetric monoidal. Hence for every positive integer $n$, and every unpointed space $X$ we get a contractible space of morphisms
$$\Sigma^{\infty}_+(X^n) \to (\Sigma^{\infty}_+ X)^{\otimes n}.$$
For $\Xc = (X,x_0)$ we can draw on the natural equivalence of functors $\Sigma^{\infty}(-)_+ \simeq \Sb \oplus \Sigma^{\infty}(-)$ for $\Xc$, which allows us to define a natural morphism with left inverse
$$(\Sigma_+^{\infty} X)^{\otimes n} \leftrightarrows  \Sigma^{\infty} \Xc^{\wedge n}.$$
Here we used repeatedly that $\otimes$ commutes with small colimits in its entries.
\end{proof}

We observe the following:

\begin{rmk}
The corollary above can be refined to produce a natural equivalence
$$\Sigma^{\infty}_+(X^n) \simeq \oplus_{i=0}^n(\Sigma^{\infty}\Xc^{\wedge i})^{\oplus{ {n}\choose{i}}}$$
for every pointed space $\Xc = (X,x_0)$.
\end{rmk}

The last conclusion we draw is again rather general.

\begin{corollary}\label{cor:ring_spectrum}\label{cor:monoidtorings}
The functor $\Sigma^{\infty}_+\colon \Spaces \to \Spectra$ sends $E_{\infty}$-objects in (unpointed) spaces to $E_{\infty}$-ring spectra.
\end{corollary}

\begin{proof}
It is a general statement that symmetric monoidal functors preserve $E_{\infty}$-objects (also called commutative algebra objects in \cite{Lurie:ha}). This follows directly from the definitions, we give the proof since we could not find a reference. In the notation of \cite[Chapter 2]{Lurie:ha}, a symmetric monoidal structure on an $\infty$-category is encoded by a functor $\C_{\otimes} \to N(\mathsf{Fin}_*)$ satisfying certain properties (see \cite[Definition 2.0.0.7]{Lurie:ha}). A symmetric monoidal functor is given by a commutative diagram (see \cite{Lurie:ha} for a precise account of further technical conditions required from the functor)
\[
\xymatrix{
\C_{\otimes} \ar[r] \ar[rd] & \D_{\otimes} \ar[d] \\
& N(\mathsf{Fin}_*).
}
\]
On the other hand, a commutative algebra object in $\C_{\otimes}$ is encoded by a section $\C_{\otimes} \leftrightarrows N(\mathsf{Fin}_*)$ which is a map of $\infty$-operads (see \cite[Definition 2.1.3.1]{Lurie:ha}). It is therefore clear that a symmetric monoidal functor sends such a section for $\C_{\otimes} \to N(\mathsf{Fin}_*)$ to one for $\D_{\otimes}$.
\end{proof}

\subsubsection{The definition of higher commutators}

Let $X$ be a groupoid, that is a category where all morphisms are invertible. The groupoid $X$ gives rise to an unpointed space, namely the geometric realization of its nerve $|NX|$. Henceforth this will be implicit, and we use this construction to realize the $2$-category of groupoids as a full subcategory of the $\infty$-category of unpointed spaces $\Spaces$.

Every group $G$ gives rise to a groupoid $BG$, by definition the category with a unique object $\{\ast\}$ and $\Aut_{BG}(\ast) = G$. Consistent with the paragraph above, we also denote the associated unpointed space by $BG$. However we remark that the functor
$B\colon \mathsf{Grps} \to \Spaces$
factors through the $\infty$-category of pointed spaces $\Spaces_\bullet$:
$$\mathcal{B}\colon G \mapsto (BG,\ast).$$

Let $\Eb$ be a spectrum, and $X$ a groupoid. A spectral extension of $X$ by $\Eb$ is defined to be a morphism $\Sigma^{\infty}_+X \to \Sigma^2 \Eb$. In particular, for $G$ a group, a spectral extension of $G$ by $\Eb$ is given by a morphism $e\colon \Sigma^{\infty}_+BG \to \Sigma^2\Eb$.

\begin{definition}
Let $x \in X$ be an object. We denote by $P^n_{X}$ the groupoid whose objects are tuples $(x;g_1,\dots,g_n)$ where $x \in X$ is an object, and $(g_1,\dots,g_n) \in (\Aut_{X}(x))^n$ is an $n$-tuple of pairwise commuting automorphisms. Morphisms $(x;g_1,\dots,g_n) \to (y;h_1,\dots,h_n)$ in $P^n_{X}$ are given by a morphism $f\colon x \to y$, such that $f \circ g_i = h_i \circ f$ for all $1 \leq i \leq n$. This defines an endofunctor $P_n$ of the $2$-category of groupoids $\mathsf{Grpd}$.
\end{definition}

Objects of $P^n_{X}$ are pairwise commuting $n$-tuples of automorphisms in $X$ at a fixed object $x$. It follows that there is a natural equivalence $P^n_{X} \simeq \Map((B\mathbb{Z}^n)_+,X_+)$.

\begin{lemma}
With respect to the embedding of groupoids in unpointed spaces, the functor $$P^n\colon \mathsf{Grpd} \to \mathsf{Grpd}$$ is naturally equivalent to $\Map_{\Spaces}(B\mathbb{Z}^n,X)$.
\end{lemma}
\begin{proof}
We obtain the object $x$ as the image of $\ast$ under the corresponding map of unpointed spaces $B\mathbb{Z}^n \to X$. Since $\Aut_{B\mathbb{Z}^n}(x_0) = \mathbb{Z}^n$ we have a canonical choice for an $n$-tuple of pairwise commuting automorphisms, given by the standard basis of $\mathbb{Z}^n$. We then transport this choice along the induced map of groups $\mathbb{Z}^n \to \Aut_{X}(x)$ to conclude the proof.
\end{proof}

Recall that $B\mathbb{Z}^n$ is a strict abelian group object in $\Spaces$. We obtain from Corollary \ref{cor:ring_spectrum} that $\Sigma_+^{\infty}(B\mathbb{Z}^n)$ is an $E_{\infty}$-ring spectrum. In particular we see that $\pi_*(\Sigma_+^{\infty}(B\mathbb{Z}^n))$ is a graded commutative algebra. We will write $(x|y)$ to denote the product of two elements, and more generally $(x_1|\cdots |x_n)$ for the product of $n$ elements.

\begin{definition}\label{defi:hcspectra}
Let $e\colon \Sigma^{\infty}_+X \to \Sigma^2\Eb$ be a spectral extension of a groupoid $X$ by a spectrum $\Eb$.
\begin{itemize}
\item[(a)] Corollary \ref{cor:Sigmaplus} applied to the pointed space $(S^1,1)=(B\mathbb{Z},\ast)$ yields a splitting $\Sigma^{\infty}_+B\mathbb{Z} \simeq \Sb \oplus \Sigma \Sb$. The map of spectra $\Sigma \Sb \to \Sigma^{\infty}_+B\mathbb{Z}$ will be denoted by $f$.
\item[(b)] For an integer $i$ satisfying $1 \leq i \leq n$ we let $\mathbb{Z} \to \mathbb{Z}^n$ be the map $\lambda \mapsto (0,\dots,0,\lambda,0,\dots, 0)$ given by the inclusion of the $i$-th component. We write $\phi_i\colon B\mathbb{Z} \to B\mathbb{Z}^n$ for the induced map of pointed spaces. The induced element $(\phi_i)_*(f) \in \pi_1(\Sigma^{\infty}_+B\mathbb{Z}^n)$ is denoted by $e_i$.
\item[(c)] Let $(B\mathbb{Z}^n) \to^g X$ be an $n$-tuple of pairwise commuting automorphisms. We denote the induced map of spectra $\Sigma^{\infty}_+(B\mathbb{Z}^n) \to^g \Sigma^{\infty}_+(X) \to^e \Sigma^2\Eb$ by $\phi(g,e)$. The higher commutator is defined to be $\phi(g,e)_*(e_1 | \cdots | e_n) \in \pi_{n-2}(\mathbb{E})$ and will be denoted by $g_1 \ast_e \cdots \ast_e g_n$.
\end{itemize}
\end{definition}

\subsubsection{Comparison}

In order to show that this definition is non-trivial we compare it to the construction of Definition \ref{defi:highercommutators}.

\begin{lemma}\label{lemma:aby}
Let $G$ be a group, and let $[e] \in H^{n+2}_{grp}(G,A)$ be a higher central extension corresponding to a map $e\colon BG \to B^{n+2}A$ and let $(g_0,\dots,g_{n+1})$ be an $(n+2)$-tuple of pairwise commuting elements. With respect to the natural isomorphism $\pi_{n}(K(A,n)) \simeq A$ we have that the spectral higher commutator $g_0 \ast_e \cdots \ast_e g_{n+1}$ agrees with the higher commutator $(g_0,\dots,g_{n+1})_e$ of Definition \ref{defi:highercommutators}.
\end{lemma}

\begin{proof}
Recall that for an unpointed space $X$ we have the Hurewicz morphism $h\colon \pi_*(\Sigma^{\infty}_+X) \to H_*(X,\mathbb{Z})$, obtained by applying the functor $\pi_*$ to the morphism of spectra
\begin{equation}\label{ringspecmor}
\Sigma^{\infty}_+X \to \Sigma^{\infty}_+X \otimes \mathbb{Z},
\end{equation}
where we use the notation $\mathbb{Z}$ to denote the Eilenberg-Mac Lane spectrum corresponding to the ring $\mathbb{Z}$.

If $X$ is an $E_{\infty}$-object in (unpointed) spaces (that is, a commutative monoid), we have already seen that $\pi_*(\Sigma^{\infty}_+X)$ and $H_*(X,\mathbb{Z})$ are endowed with a graded commutative product structure. The Hurewicz morphism respects this product, since the morphism \eqref{ringspecmor} is a morphism of $E_{\infty}$-ring spectra, induced by the morphism of $E_{\infty}$-ring spectra $\Sb \to \mathbb{Z}$.

We have a commutative diagram of abelian groups
\[
\xymatrix{
\pi_{n+2}(\Sigma^{\infty}_+ B\mathbb{Z}^n) \otimes H^{0}(B\mathbb{Z}^n,\Sigma^{n+2} A) \ar[r]^-{h \otimes \id} \ar[rd] & H_{n+2}(B\mathbb{Z}^n,\mathbb{Z}^n) \otimes H^{n+2}(B\mathbb{Z}^n,A) \ar[d] \\
& A,
}
\]
and furthermore we have $h(e_0|\cdots|e_{n+1}) = (e_0|\dots|e_{n+1})$ by the discussion above. By virtue of Definitions \ref{defi:hcspectra} and \ref{defi:highercommutators} we conclude that the new notions of higher commutators agree.
\end{proof}

\subsubsection{Computing higher commutators recursively}\label{sub:recursion}

For a group $G$, and $g\in G$, we write $C_G(g)$ to denote the centralizer. We now describe a version of the classical slash product to associate to a spectral extension $e\colon \Sigma^{\infty}_+X \to \Sigma^2\mathbb{E}$ and an element $g \in \Aut_{X}(x)$, a spectral extension $e\langle g \rangle \colon \Sigma^{\infty}_+BC_{\Aut_{X}(x)}(g) \to \Sigma^2(\Omega \mathbb{E})$.

\begin{definition}\label{defi:recursy}
Let $X$ be a groupoid, $x \in X$ an object, and $g \in \Aut_{X}(x)=G$ an automorphism. We denote by $e\colon \Sigma^{\infty}_+ X \to \Sigma^2\mathbb{E}$ a spectral extension of $X$ by $\mathbb{E}$.
\begin{itemize}
\item[(a)] We let $(BC_{\Aut_{X}(x)}(g)) \to X$ be the map of unpointed spaces induced by the inclusion $C_{\Aut_{X}(x)}(g) \subset \Aut_{X}(x)$.
\item[(b)] Let $B\mathbb{Z} \times BC_{\Aut_{X}(x)}(g) \to BG$ be the map of unpointed spaces induced by the map of  groups $\mathbb{Z} \times C_{\Aut_{X}(x)}(g) \to G$ sending $1\in\mathbb{Z}$ to $g$, and given by the inclusion of $C_{\Aut_X(x)}(g)$.
\item[(c)] We denote by $\Sigma \Sb \to \Sigma^{\infty}_+ B\mathbb{Z} \simeq \Sigma^{\infty}_+ S^1$ the map specified by Corollary \ref{cor:sbshift}.
\item[(d)] The map $e\langle g \rangle\colon \Sigma^{\infty}_+ BC_{\Aut_{X}(x)}(g) \to \Sigma^2(\Omega \mathbb{E})$ is defined to be the adjoint to the map
$$\Sigma \Sigma^{\infty}_+BC_{\Aut_{X}(x)}(g) \to \Sigma^2\mathbb{E}$$ defined by the composition
$$\Sigma \Sigma^{\infty}_+BC_{\Aut_{X}(x)}(g) \simeq \Sigma \Sb \otimes \Sigma^{\infty}_+BC_{\Aut_{X}(x)}(g) \to \Sigma^{\infty}_+(B\mathbb{Z} \times BC_{\Aut_{X}(x)}(g)) \to \Sigma^{\infty}_+ G \to \Sigma^2\mathbb{E},$$
where we have used that $\Sigma^{\infty}_+$ is symmetric monoidal as explained in (i) above.
\end{itemize}
\end{definition}

The following assertion follows right from the definitions.

\begin{lemma}\label{lemma:recursion}
Let $(g_1,\dots,g_n) \in P_n(G)$. Then we have
$(g_1\star \dots \star g_n)_e \cong (g_2 \star \dots \star g_n)_{e\langle g_1 \rangle}.$
\end{lemma}

\subsubsection{Comparison with Osipov--Zhu's definition for $n=3$}

Recall that a groupoid endowed with a symmetric monoidal structure is called a \emph{Picard groupoid}, if the monoidal structure is \emph{group-like}. That is, the induced monoid structure on the set of isomorphism classes is a group structure. We denote by $\mathbf{P}$ a Picard groupoid, and by $\mathbf{0} \in \mathbf{P}$ a unit. The group $\Aut_{\mathbf{P}}(0)$ is abelian (as a consequence of the Eckmann--Hilton trick), and will be denoted by $\mathbf{\Omega}\mathbf{P}$. Similar conventions will be applied to Picard $2$-groupoids, that is, group-like symmetric monoidal $(2,1)$-groupoids.

Given a spectral extension $e$ of $G$ by $\mathbb{E}$ and an element $g \in G$, we constructed (see Definition \ref{defi:recursy}) a spectral extension $e\langle g \rangle$ of the centralizer $C(g)$ by $\Omega \mathbb{E}$. This shifted spectral extension satisfies the identity (Lemma \ref{lemma:recursion})
$$(g_1\star \dots \star g_n)_e \cong (g_2 \star \dots \star g_n)_{e\langle g_1 \rangle}.$$
Readers of Osipov--Zhu's \cite{MR2833793} will recognize the similarity with their recursive definition of higher commutators. The authors of \emph{loc. cit.} associate to an extension $\psi$ of a group $G$ by a Picard groupoid $\mathbf{P}$, and an element $f \in G$ a (graded) central extension $\psi_f$ of $C(f)$ by $\Aut_{\mathbf{P}}(\mathbf{0})$. Eventually, the commutator $C_3(f,g,h)$ is defined to be $\mathrm{Comm}(\psi_f)(g,h)$ with respect to the latter central extension. Here $\mathrm{Comm}(\psi_f)$ denotes the commutator pairing of \cite[Lemma-Definition 2.5]{MR2833793}.

\begin{proposition}\label{prop:OZ_C3}
Let $\phi\colon G \to B\mathbf{P}$ be the monoidal map corresponding to a central extension of $G$ by $\mathbf{P}$. We denote by $e\colon \Sigma^{\infty}BG \to \Sigma\mathbb{B}\mathbf{P}$ the corresponding spectral extension of $G$ by $\Omega\mathbb{B}\mathbf{P}$, the spectrum associated to the Picard groupoid $\mathbf{P}$. Then,
$$C_3(f,g,h) = f\star g \star h.$$
\end{proposition}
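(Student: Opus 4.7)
The plan is to apply the recursion of Lemma~\ref{lemma:recursion} to reduce the 3-fold commutator $f\star g\star h$ to a 2-fold commutator with respect to a shifted spectral extension $e\langle f\rangle$ of $C(f)$, identify $e\langle f\rangle$ with Osipov--Zhu's inductively defined shift $\phi\langle f\rangle$, and invoke the 2-variable comparison with their $C_2$.

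For the recursion step, Lemma~\ref{lemma:recursion} gives $f\star_e g\star_e h = g\star_{e\langle f\rangle} h$, where $e\langle f\rangle\colon \Sigma^{\infty}BC(f)\to \Sigma^{2}\Omega^{2}\mathbb{B}\mathbf{P}=\mathbb{B}\mathbf{P}$ is the spectral extension of $C(f)$ by $\Omega^{2}\mathbb{B}\mathbf{P}$ obtained by restricting $\phi$ along $(n,h)\mapsto f^{n}h$ and then extracting the mixed wedge summand in the stable splitting of $\Sigma^{\infty}B(\mathbb{Z}\times C(f))_+$ provided by Lemma~\ref{lemma:stable_splitting}. Taking $\Omega^{\infty}$-adjoint, $e\langle f\rangle$ corresponds to a pointed map $BC(f)\to \Omega^{\infty}\mathbb{B}\mathbf{P}\simeq B\mathbf{P}$, i.e., to a monoidal functor $\phi_{e}\colon C(f)\to \mathbf{P}$.

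The main step is to identify $\phi_{e}$ with Osipov--Zhu's shifted extension $\phi\langle f\rangle$. Osipov--Zhu construct $\phi\langle f\rangle$ directly in categorical terms: for $h\in C(f)$, they assign the object $\phi(fh)\otimes\phi(f)^{-1}\in \mathbf{P}$, equipped with monoidality data obtained from the natural isomorphisms $\phi(fh)\simeq \phi(f)\otimes\phi(h)$ witnessing the monoidality of $\phi$ together with the commutativity relation $fh=hf$. On our side, applying Lemma~\ref{lemma:explicit} to unwind the stable-splitting definition of $e\langle f\rangle$ yields precisely the same assignment on objects, with coherence isomorphisms induced by the associator of $\phi$. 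I expect this identification to be the main obstacle, since it requires reconciling the abstract stable splitting of $\Sigma^{\infty}B(\mathbb{Z}\times C(f))_+$ with the strict categorical formula of Osipov--Zhu. The key simplification is that both constructions depend only on the restriction of $\phi$ to the abelian subgroup $\langle f\rangle\cdot C(f)\subset G$, where the stable splitting can be unwound explicitly and matched with Osipov--Zhu's $2$-cocycle formulas.

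Once $\phi_{e}\simeq \phi\langle f\rangle$ is established, the proof concludes by invoking the 2-variable comparison: for a spectral extension arising from a Picard-groupoid extension, the 2-fold commutator $\star$ coincides with Osipov--Zhu's $C_{2}$, by the same argument as Corollary~\ref{cor:graded} applied to the underlying graded structure of $\mathbf{P}$. Thus $g\star_{e\langle f\rangle}h = C_{2}(g,h)_{\phi\langle f\rangle}$, and by the Osipov--Zhu definition of $C_{3}$ as $C_{2}$ of the shifted extension, this equals $C_{3}(f,g,h)$, completing the proof.
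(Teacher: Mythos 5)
Your strategy is the same as the paper's: reduce $f\star g\star h$ to $g\star_{e\langle f\rangle}h$ via Lemma~\ref{lemma:recursion}, identify $e\langle f\rangle$ with Osipov--Zhu's shifted extension of $C(f)$ by $\Omega\mathbf{P}$, and then invoke the two-variable comparison. However, your write-up of the identification step — which you correctly flag as the crux — has two defects. First, the subgroup $\langle f\rangle\cdot C(f)$ \emph{equals} $C(f)$ (since $f\in C(f)$), and $C(f)$ is not abelian in general, so the proposed simplification does not hold as stated; what is true, and what you should say, is that for computing a specific commutator $g\star_{e\langle f\rangle}h$ one may restrict $e\langle f\rangle$ to the abelian subgroup generated by the commuting pair $g,h$. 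Second, you describe $e\langle f\rangle$ as a monoidal functor $C(f)\to\mathbf{P}$ valued in objects $\phi(fh)\otimes\phi(f)^{-1}$; the paper instead uses the observation that, since $BC(f)$ is connected, $e\langle f\rangle$ factors through the connected cover $\tau_{\geq 1}\mathbb{B}\mathbf{P}\cong\mathbb{B}B\Omega\mathbf{P}$, which produces a monoidal map $C(f)\to\Omega\mathbf{P}=\Aut_{\mathbf{P}}(\mathbf{0})$ sending $g$ to the automorphism $\phi(fg)\cong\phi(f)\phi(g)\cong\phi(g)\phi(f)\cong\phi(gf)\cong\phi(fg)$. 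It is this automorphism-valued description that matches Osipov--Zhu's Lemma-Definition 2.5 on the nose, and the connected-cover step is exactly what relieves the ``reconciliation'' pressure you worry about. Neither issue changes the overall route, but both would need to be fixed to turn your sketch into a proof.
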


\begin{proof}
At first we recall Osipov--Zhu's construction of the central extension of the centralizer $C(f)$ by $\mathbf{\Omega}^2\mathbf{P}$. In \cite[Lemma-Definition 2.13]{MR2833793}
they define a symmetric monoidal map
$$C(f) \to \mathbf{P},$$
which sends $g \in C(f)$ to the element of $\mathbf{P}\simeq Aut_{B\mathbf{P}}(\mathbf{0})$ given by
\begin{equation}\label{mappy}\phi(fg) \cong \phi(f)\phi(g) \cong \phi(g)\phi(f) \cong \phi(gf) \cong \phi(fg).\end{equation}
It is well-known that the $(3,1)$-category of group-like symmetric monoidal $(2,1)$-groupoids is equivalent to the full subcategory $\Spectra_{[0,2]}$ of the $\infty$-category of spectra $\Spectra$, consisting of spectra $\Eb$ with vanishing $\pi_i(\Eb)$ for $i \notin [0,2]$. This assertion can be deduced from a result of Boardman--Vogt and May (see \cite[Theorem 5.2.6.10]{Lurie:ha}).

This equivalence allows one to consider $\mathbf{P}$ as a spectrum, which we denote $\Omega\mathbb{B}\mathbf{P}$. Osipov--Zhu's map \eqref{mappy} is then an explicit description of the adjoint to
$$\Sigma \Sigma^{\infty}_+BC(f) \simeq \Sigma^{\infty}_+(B\mathbb{Z} \times BC(f)) \to \mathbb{B}\mathbf{P},$$
and hence is equivalent to the central extension $e\langle f \rangle$ defined in Definition \ref{defi:recursy}. We infer the following assertion:

\begin{claim}\label{firstclaim}
The symmetric monoidal map $\psi_f\colon C(f) \to \mathbf{P}$ defined in \cite[Lemma-Definition 2.13]{MR2833793} is homotopic to the map $e\langle f \rangle$ of Definition \ref{defi:recursy}, with respect to the natural embedding of Picard groupoids into the $\infty$-category of spectra.
\end{claim}

It remains to compare Osipov--Zhu's $\mathrm{Comm}(\psi)(g,h)$ of \cite[Lemma-Definition 2.5]{MR2833793} with $g \ast_{e\langle f \rangle} h$. This is the content of the next assertion.

\begin{claim}\label{secondclaim}
Let $H$ be a group, $\mathbf{P}$ a Picard groupoid, and $\psi\colon H\to \mathbf{P}$ a monoidal morphism. We denote by $\alpha$ the corresponding spectral extension of $H$ by $\Omega\mathbb{B}\mathbf{P}$. Then we have for $(g,h) \in P_2(H)$ the equality $\mathrm{Comm}(\psi)(g,h) = g \ast_{\alpha} h$ of elements of $\Aut_{\mathbf{P}}(\mathbf{0})$.
\end{claim}

For any $g\in C(f)$, we have that the map $(\psi_f)_g\colon C(g)\to \Aut_{\mathbf{P}}(\mathbf{0})$ of \cite[Lemma-Definition 2.5]{MR2833793} is homotopic to $\alpha\langle g\rangle$ (by an argument analogous to the one above, one category level down).

To deduce Claim \ref{secondclaim}, observe that it follows directly from the definition given in \cite[Lemma-Definition 2.5]{MR2833793} that $\mathrm{Comm}(\psi)(g,h) = \psi_g(h)$, and similarly, we know by virtue of Lemma \ref{lemma:recursion} that $\alpha\langle g \rangle (h) = g \ast_{\alpha} h$. We deduce $$\mathrm{Comm}(\psi)(g,h) = g \ast_{\alpha} h.$$ This concludes the proof of Claim \ref{secondclaim}. The proposition follows.
\end{proof}

\subsection{Spectral Extensions coming from the {K}-theory of rings}\label{sub:examples_commutators}
We begin with a quick review of the relevant facts about $K$-theory.  This will also serve to fix notation.  Experts should feel free to skip ahead.


\subsubsection{Steinberg Symbols}
In the following we denote by $R$ a ring, again assumed commutative and unital. Careful inspection of the definition of your choice of algebraic $K$-theory, reveals the existence of a canonical morphism
\begin{equation}\label{eqn:GL}
\Sigma^\infty_+ \coprod_{n \in \mathbb{N}_{\geq 1}} B\GL_n(R) \to \Kb_R.
\end{equation}
More generally, for a stable $\infty$-category $\C$, there is a canonical morphism
\begin{equation}\label{eqn:canonical_extension}
\Sigma_+^\infty \C^{\times} \to \Kb_{\C}.
\end{equation}
The morphism \eqref{eqn:GL} is a special case of this construction.\footnote{i.e. after factoring through the inclusion $P_f(R)^\times\to\Perf(R)^\times$.}
\begin{definition}\label{defi:canonical}
    The existence of the morphism \eqref{eqn:GL} can be restated as saying that the groupoid $\coprod_{n \in \mathbb{N}}B\GL_n(R)$ is canonically endowed with a central extension by $\Omega^2\Kb_R$. Similarly, \eqref{eqn:canonical_extension} amounts to the $\infty$-groupoid $\C^{\times}$ being endowed with a central extension by $\Omega^2\Kb_{\C}$. We will denote the extensions by $e_R$ and $e_{\C}$ respectively.
\end{definition}

The central extension of $\GL_n(R)$ by $\Omega^2\Kb_R$ has appeared in work of Safronov \cite{Safronov:2013uq}. The theory of higher commutators developed in this section enables us to generalize \emph{Steinberg symbols} to a general stable $\infty$-category.

\begin{definition}\label{defi:Steinberg_Waldhausen}
We denote by $(g_1,\dots,g_n) \in P_n(\C^\times,x)$ a map of unpointed spaces $\mathbb{T}^n \to \C^{\times}$ mapping the base point of $\mathbb{T}^n$ to $x$. The map
$$(g_1\star\cdots\star g_n)_{e_\C}\colon \Sigma^\infty\mathbb{S}^n_+ \to \Kb_{\C}$$
is referred to as the higher commutator with respect to the natural extension of $\C^{\grp}$ by $\Omega^2\Kb_{\C}$.
\end{definition}

The justification of the terminology \emph{Steinberg symbol} is provided by the next proposition, which compares the higher commutators of Definition \ref{defi:Steinberg_Waldhausen} with Loday's higher Steinberg symbols, for the category of finitely generated projective $R$-modules.

\begin{proposition}\label{prop:Steinberg}
Let $R$ be a commutative ring, and $r_1,\dots,r_n \in R^{\times}$ be an $n$-tuple of units in $R$. The higher commutator $(r_1 \star \dots \star r_n)_{e_R}$, computed with respect to the spectral extension $e_R$ of Definition \ref{defi:canonical}, agrees with Loday's higher Steinberg symbol $\{r_1,\dots,r_n\}$.
\end{proposition}

Before giving the proof, we recall Loday's definition from \cite{MR0447373}. In modern language, Loday's construction of the Steinberg symbols relies on the $E_\infty$-ring structure of $K_R$ (in which the product is induced by the tensor product $\otimes$ of $R$-modules). If $\alpha_1,\dots,\alpha_n$ is an $n$-tuple of paths in $K_R$ based at $\mathbf{0} \in K_R$, the multiplication $\otimes$ induces a map
$$\Sigma^\infty{(\mathbb{S}^{1})}^{\wedge n} \to\Kb_R^{\wedge n}\to \Kb_R,$$
which defines an element $(\alpha_1 | \dots | \alpha_n)$ of $\pi_n(\Kb_R) = K_n(R)$.
%

\begin{proof}[Proof of Proposition \ref{prop:Steinberg}]
Let $\C$ be an exact category with a bi-exact symmetric monoidal structure $\otimes$. This endows the maximal pointed groupoid $\C^{\times}$ with a symmetric monoidal structure $\otimes$. By definition, the canonical map $\Sigma_+^{\infty}\C^{\times} \to \Kb_{\C}$ is a map of $E_\infty$-ring spectra.

For $\C = P_f(R)$ the symmetric monoidal exact category of finitely generated projective $R$-modules, we have a symmetric monoidal morphism
$BR^{\times} \to (P_f(R))^{\times}.$
It is obtained by viewing $(BR^{\times},\otimes)$ as the symmetric monoidal category of free $R$-modules of rank $1$. Therefore we have a morphism $BR^{\times} \to P_f(R)^{\times}$ of $E_{\infty}$-objects in (unpointed) spaces. By virtue of Corollary \ref{cor:monoidtorings} we obtain a morphism of $E_{\infty}$-ring spectra
$$\Sigma_+^{\infty} BR^{\times} \to \Sigma_+^{\infty}P_f(R)^{\times}.$$

For $(r_1,\dots,r_n) \in R^{\times}$ the resulting map $B\mathbb{Z}^n \to BR^{\times}$ is symmetric monoidal, and therefore, another application of Corollary \ref{cor:monoidtorings} yields a morphism of $E_{\infty}$-ring spectra
$$\Sigma^{\infty}_+B\mathbb{Z}^n \to \Sigma^{\infty}_+BR^{\times}.$$
Composing the morphism of $E_{\infty}$-ring spectra defined above, we obtain
$$\psi\colon \Sigma_+^{\infty}B\mathbb{Z}^n \to \Sigma_+^{\infty}P_f(R)^{\times}\to \Kb_R.$$
Recall that we have a functor from the homotopy category of $E_{\infty}$-ring spectra to the category of graded commutative rings.
This implies the equality
$\psi_*(e_1 | \cdots | e_n) = (r_1|\dots|r_n),$
where we denote by $(e_1,\dots,e_n) \in \mathbb{Z}^n$ the standard basis of $\mathbb{Z}^n$.
By definition of higher commutators, the left hand side agrees with $r_1 \ast \cdots \ast r_n$. The right hand side on the other hand is given by Loday's higher Steinberg symbol $\{r_1,\dots,r_n\}$. This concludes the proof.
\end{proof}



\section{The CC symbol via Tate categories}\label{symbols}

Now we are almost ready give the full definition of our higher Contou-Carr\`ere symbol, pursuing the strategy which had called \textit{Idea 2} in the introduction.

\subsection{Lattices and Tate objects}

\subsubsection{Tate Objects in Exact Categories}\label{subsub:Tate_objects}

We recall the constructions of Ind, Pro, and Tate objects in exact categories, and refer the reader to \cite{MR3510209} for background. The ideas of these constructions go back to papers by Beilinson \cite{MR923134} and Kato \cite{MR1804933}, and have also been studied by Previdi in \cite{MR2872533}. We also refer the reader to Drinfeld's theory of Tate $R$-modules introduced in \cite{MR2181808}.

A \emph{filtered set} $I$ is a set $I$ together with a partial ordering $\leq$, such that for each pair $(i,j) \in I^2$ there exists a $k \in I$, satisfying $i \leq k$ and $j \leq k$. Every filtered set can be viewed as a category in a straightforward manner.

Let $\C$ be an exact category. An \emph{admissible} Ind-object in $\C$ indexed by $I$ is a functor $X\colon I \to \C$, such that the relation $i \leq j$ determines an admissible monomorphism with respect to the exact structure of $\C$. For example, we can take $I$ to be the set $\mathbb{N}$ with its natural ordering. An $\mathbb{N}$-indexed admissible Ind-object in $\C$ can then be pictured as a formal colimit of a diagram
\begin{equation}\label{eqn:ind-obj}
X_0 \hookrightarrow X_1 \hookrightarrow X_2 \hookrightarrow \cdots .
\end{equation}
Every admissible Ind-object gives rise to a left exact presheaf. To $X\colon I \to \C$ one associates the presheaf
$A \mapsto \colim_{i \in I} \Hom(A,X(i)).$
The resulting full subcategory of $\Lex(\C)$ of all objects of this shape is denoted by $\Ind^a(\C)$. In Theorem 3.7 of \cite{MR3510209} the authors showed that $\Ind^a(\C)$ is an extension closed subcategory of $\Lex(\C)$. This implies that it inherits a structure of an exact category.

Admissible Pro-objects in $\C$ are defined dually, i.e. by replacing the role of admissible monomorphisms by admissible epimorphisms. In short we have,
$\Pro^a(\C) = (\Ind^a(\C^{\op}))^{\op}$. An admissible Pro-object indexed by a filtered set $I$ is a functor $X\colon I^{\op} \to \C$, which sends $i \leq j$ to an admissible epimorphism in $\C$. For $I = \mathbb{N}$ we obtain the dual depiction of a Pro-object as a formal limit of a diagram
\begin{equation}\label{eqn:pro-obj}
X_0 \twoheadleftarrow X_1 \twoheadleftarrow X_2 \twoheadleftarrow \cdots .
\end{equation}

An \emph{elementary Tate object} is an admissible Ind-Pro-object, i.e. an object $V$ in $\Ind^a \Pro^a(\C)$, which can be (non-canonically) written as an extension
\begin{equation}\label{eqn:tate}
L \hookrightarrow V \twoheadrightarrow V/L,
\end{equation}
with $L \in \Pro^a(\C)$ and $V/L \in \Ind^a(\C)$. We refer to any such $L$ as a \emph{lattice} in $V$. The category of elementary Tate objects in $\C$ has a natural exact structure (Theorem 5.4 in \cite{MR3510209}), and will be denoted by $\elTate(\C)$.

\begin{proposition}[Kapranov]
If $k$ is a field, then $\elTate(\mathsf{Vect}_k^{fd})$, i.e. the exact category of elementary Tate objects of finite-dimensional $k$-vector spaces, is equivalent to the category of locally linearly compact topological $k$-vector spaces (as exact categories).
\end{proposition}
See \cite[\S 1.1.2]{KapranovSemiInfinite}.


The exact category $\Tate(\C)$ of \emph{Tate objects} in $\C$ is defined to be the idempotent completion of $\elTate(\C)$. If $R$ is a ring, and $\C = P_f(R)$, the exact category of finitely-generated projective $R$-modules, then $\Tate(P_f(R))$ contains Drinfeld's category of Tate $R$-modules as a full subcategory. See \cite[Thm. 5.26]{MR3510209}, where we show that for countable index sets $I$, the two categories are in fact equivalent. We emphasize that in \cite{MR2181808}, Drinfeld refers to what we call lattices as \emph{co-projective lattices}.

\begin{definition}\label{defi:grp}
For a category $\D$ (respectively $\infty$-category), we denote by $\D^{\grp}$ the maximal groupoid contained in $\D$ (respectively $\infty$-groupoid).
\end{definition}

The following result is \cite[Prop. 3.3]{Braunling:2014vn}.
\begin{proposition}\label{prop:gr}
For an idempotent complete exact category $\C$, we denote by $\Gr^{\leq}_{\bullet}(\C)$ the simplicial object in groupoids, which parametrizes chains $(V \supset L_n \supset \cdots \supset L_0)$, where $V$ is an elementary Tate object in $\C$, and each $L_i$ is a lattice in $V$. We have a forgetful morphism $\Gr^{\leq}_{\bullet}(\C) \to \elTate(\C)^{\grp}$, which induces an equivalence $|\Gr^{\leq}_{\bullet}(\C)| \to^{\cong} \elTate(\C)^{\grp}$.
\end{proposition}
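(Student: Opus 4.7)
The plan is to identify the forgetful map $\Gr^\leq_\bullet(\C) \to \elTate(\C)^\grp$ (with the target viewed as a constant simplicial groupoid) as a Grothendieck-style fibration with contractible fibers; a map with contractible fibers over a groupoid induces an equivalence on realizations. Essential surjectivity is immediate from the definitions: every elementary Tate object $V$ admits at least one lattice by definition, so the map $\Gr^\leq_0(\C) \to \elTate(\C)^\grp$ is already surjective on isomorphism classes.

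The technical heart is the contractibility of the fiber over a fixed $V$. This fiber is the nerve of the poset $\Gr(V)$ of lattices in $V$, ordered by inclusion, and I would argue that $\Gr(V)$ is upward-filtered: given two lattices $L, L' \subset V$, the image $L + L' := \mathrm{im}(L \oplus L' \to V)$ is again a lattice containing both $L$ and $L'$. This reduces to verifying that admissible Pro-objects are closed under finite sums and quotient-images inside $\Ind^a\Pro^a(\C)$, a bookkeeping statement that follows from the extension-closure properties of $\Pro^a(\C)$ and $\Ind^a(\C)$ established in \cite{Braunling:2014ys}. Once $\Gr(V)$ is known to be upward filtered, its nerve is contractible by the classical fact that filtered posets have contractible classifying space.

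Finally, the simplicial object $\Gr^\leq_\bullet(\C)$ is naturally a Grothendieck construction of the assignment $V \mapsto \Gr^\leq_\bullet(V)$ over the groupoid $\elTate(\C)^\grp$. Taking geometric realization commutes with colimits of groupoids, so $|\Gr^\leq_\bullet(\C)|$ is the colimit over $\elTate(\C)^\grp$ of the functor $V \mapsto |\Gr^\leq_\bullet(V)|$. Since this functor is pointwise contractible by the previous paragraph, the colimit is $\elTate(\C)^\grp$ itself, producing the desired equivalence. The main obstacle, to my mind, is the sum-of-lattices lemma underlying filteredness of $\Gr(V)$; once that is secured, the remainder is essentially formal, relying on the standard classifying-space theory of filtered posets and a routine Grothendieck-construction / Quillen Theorem A argument in the $\infty$-categorical setting.
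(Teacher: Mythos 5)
Your overall route is the same as the paper's: the proposition is quoted from \cite[Prop.\ 3.3]{Braunling:2014vn}, and the paper itself describes the proof as a \emph{formal} consequence of the statement that the lattice poset $\Gr(V)$ of an elementary Tate object is filtered (\cite[Thm.\ 6.7]{Braunling:2014ys}); your fibration/Grothendieck-construction step is exactly that formal consequence (level-wise the forgetful functor is an isofibration of groupoids whose strict fibre over $V$ is the discrete set of $n$-chains in $\Gr(V)$, so contractibility of $|N\Gr(V)|$ yields the equivalence after realization; one minor point is that the last step should be a homotopy colimit over $\elTate(\C)^{\grp}$ rather than a strict one). The genuine gap is precisely where you declare the ``main obstacle'' to be routine: the claim that $\image(L \oplus L' \to V)$ is again a lattice, ``a bookkeeping statement that follows from extension-closure.'' That claim \emph{is} the main theorem of \cite{Braunling:2014ys}, and it does not reduce to extension-closure. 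The category $\Inda\Proa(\C)$ is exact but not abelian, so the image must be formed in an ambient abelian category such as $\Lex(\Proa(\C))$; having done so, you still must show (i) that the resulting subobject is an \emph{admissible} subobject of $V$, (ii) that it lies in $\Proa(\C)$, and (iii) that the quotient of $V$ by it is an admissible Ind-object --- none of which is inherited by images or arbitrary quotients, and none of which follows from $\Proa(\C)$ and $\Inda(\C)$ being extension-closed.

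The proof in \emph{loc.\ cit.} does not use sums of lattices at all. One first proves a factorization (compactness) lemma: a morphism from an admissible Pro-object to an admissible Ind-object factors through a stage of the Ind-diagram, equivalently through an admissible subobject lying in $\C$ (this is where idempotent completeness of $\C$ is used). Applying this to the composite $L' \hookrightarrow V \twoheadrightarrow V/L$ produces an admissible subobject $U \subset V/L$ with $U \in \C$ containing the image of $L'$, and the common enveloping lattice is then defined as the pullback $N = V \times_{V/L} U$: it is an extension of $U$ by $L$, hence lies in $\Proa(\C)$, it contains both $L$ and $L'$, and $V/N \cong (V/L)/U$ is an admissible Ind-object. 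So either cite \cite[Thm.\ 6.7]{Braunling:2014ys} directly, as the present paper does, or supply that factorization lemma; as written, your sum-of-lattices step is the missing mathematical content rather than bookkeeping, and the rest of your argument, which is correct, is the formal part.
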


\subsubsection{The Index Map}\label{index}

Let $\C$ be an exact category, following Waldhausen \cite{MR802796} we denote by $S_{n}(\C)$ the exact category, whose objects correspond to chains
$$X_1 \hookrightarrow \cdots \hookrightarrow X_n$$
of admissible monomorphisms (plus a fixed choice for all possible quotients among these objects). The $S_n(\C)$ fit together to give a simplicial object $S_{\bullet}(\C)$ in the $2$-category of exact categories: face maps are given by omitting an object/composing maps, and degeneracies by inserting the identity map.

Waldhausen's treatment of algebraic $K$-theory in \cite{MR802796} implies that, for an exact category $\C$, the classifying space $BK_{\C}$ is equivalent to the geometric realization of the simplicial object in groupoids $|S_{\bullet}\C^{\grp}|$.

Now let $\C$ be an idempotent complete exact category, and let $\Gr_\bullet^\le(\C)$ be as in Proposition \ref{prop:gr}.
\begin{definition}\label{defi:index}
    Let $\Index \colon \Gr^{\leq}_{\bullet}(\C) \to S_{\bullet}(\C)$ be the map sending $(V \supset L_n \supset \cdots \supset L_0)$ to $(L_1/L_0 \hookrightarrow \cdots \hookrightarrow L_n/L_0)$. Whenever convenient,
		\begin{itemize}
		\item the geometric realization $\elTate(\C)^{\grp} \to BK_{\C}$, as well as
		\item the induced map $K_{\elTate(\C)}\to BK_{\C}$ (see \cite[Cor. 3.5]{Braunling:2014vn})

		\end{itemize}
		will also be denoted $\Index$ and called the \emph{index map} as well.
\end{definition}
This is the map which we had alluded to in the introduction of the paper, see Equation \eqref{e_c6}.

For every elementary Tate object $V$, we obtain from $$B\Aut(V) \to^{\alpha } \elTate(\C)^{\grp} \to^{\Index} BK_{\C},$$ a monoidal map
$$\Aut(V) \to K_{\C},$$
by applying the loop space functor $\Omega$. Above, the map $\alpha$ is the one coming from the construction of Remark \ref{rmk:Ccc4}.

\subsection{The classical Contou-Carr\`ere Symbol}

\subsubsection{The Contou-Carr\`ere Symbol}

We had recalled the classical tame symbol in Equation \eqref{lsi1}. The Contou-Carr\`ere symbol arises as a ``deformation" of the tame symbol for the discrete valuation ring $R=k((t))$. For every (commutative) $k$-algebra $A$, we can consider the ring of formal Laurent series $A((t))$, which is almost never a discrete valuation ring. Nonetheless, there exists a natural pairing $A((t))^{\times} \times A((t))^{\times} \to A^{\times}$, which specializes to the tame symbol for the case $A = k$. For $A\neq k$, the explicit formula \eqref{lsi1} no longer holds. However the interpretation of the tame symbol as a graded commutator \cite{MR1013132} remains valid for Contou-Carr\`ere symbols by work of Anderson--Pablos Romo \cite{MR2036223} and Beilinson--Bloch--Esnault \cite{MR1988970}. We hence begin by summarizing the definition using graded commutators.

We denote by $\Vv_A$ the Tate object $A((t))$ in $\Tate(P_f(A))$, see \cite[Example 10]{MR3621099} for a precise definition. There is a natural map
$A((t))^{\times} \to \Aut(\Vv_A)$. Let $\mathbb{B}^{\mathbb{Z}}\G_m(A)$ denote the spectrum associated to the Picard groupoid of graded $A$-lines. For each $A$, the index map and determinant give rise to a spectral extension
\begin{equation}\label{spectralKM}
    \Sigma^{\infty}_+B(A((t))^{\times}) \to \Sigma_+^{\infty}B\Aut(\Vv_A) \to \Kk_{\elTate(P_f(A))} \to^{\Index} \Sigma\Kk_{P_f(A)} \xrightarrow{\det} \Sigma\mathbb{B}^{\mathbb{Z}}\G_m(A).
\end{equation}
Looping the adjoint of this map yields an $E_1$-map
\begin{equation}\label{KMext}
A((t))^{\times} \to B^{\mathbb{Z}}\G_m(A),
\end{equation}
classifying a graded central extension of $A((t))^{\times}$. The construction is natural in maps $A\to A'$, so it defines a central extension of group-valued sheaves. We record this observation in the following definition. Recall that the loop group $L\G_m$ is defined as the group-valued presheaf
$$L\G_m \colon (\Aff_k)^{\op} \to \Grp$$
sending $A$ to $A((t))^{\times}$.

\begin{definition}\label{defi:KacMoody}
The graded central extension \eqref{KMext} of $L\G_m$ will be denoted by $$\phi_{KM} \colon L\G_m \to B^{\mathbb{Z}}\G_m,$$ and referred to as the \emph{Kac--Moody extension} of the loop group. We denote the \emph{spectral Kac--Moody extension} \eqref{spectralKM} by
$$e_{sKM}\colon \Sigma^{\infty}_+BL\G_m \to \Sigma\Kk,$$
where $\Kk$ denotes the presheaf in connective spectra, sending a ring $A$ to $\Kk_A$.
\end{definition}
Note that the Kac-Moody extension is obtained from the spectral Kac-Moody extension by looping and applying the determinant.

We can now recall the following well-known result, which generalizes the main result of the paper \cite{MR2036223} to arbitrary $k$-algebras (without restricting to the artinian case).

\begin{proposition}\label{prop:CC_commutator}
The graded central extension $\phi_{KM}$ of Definition \ref{defi:KacMoody} relates to the Contou-Carr\`ere symbol by means of the relation
$$(-,-)^{-1} = - \star_{\phi_{KM}} -.$$
\end{proposition}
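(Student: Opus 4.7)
The plan is to unpack the definition of $\phi_{KM}$ as a composition
\[
\Sigma^{\infty}BL\G_m(A) \to \Sigma^{\infty}B\Aut(\Vv_A) \to \Kk_{\elTate(P_f(A))} \xrightarrow{\Index} \Sigma\Kk_{P_f(A)} \xrightarrow{\det} \Sigma\mathbb{B}^{\mathbb{Z}}\G_m(A),
\]
and then to compute the higher commutator $f \star_{\phi_{KM}} g$ for a commuting pair $f, g \in L\G_m(A)$ by naturally pushing the abstract commutator class $(f\,\sharp\,g)\colon \Sigma^{\infty}\mathbb{S}^2 \to \Sigma^{\infty}BL\G_m(A)$ through each of these stages, as permitted by Definition \ref{defi:higher_commutators} and the explicit description in Lemma \ref{lemma:explicit}.

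After the first two arrows, the resulting class in $\pi_2\Kk_{\elTate(P_f(A))}$ is the Steinberg--Waldhausen symbol $\{f,g\}_{\Tate}$ of Definition \ref{defi:Steinberg_Waldhausen}. By the same functoriality argument as in Proposition \ref{prop:Steinberg}, applied to the exact functor $P_f(A((t))) \to \elTate(P_f(A))$ sending a finitely generated projective $A((t))$-module $P$ to $P \otimes_{A((t))} \Vv_A$, the class $\{f,g\}_{\Tate}$ is the image of the classical Loday--Quillen Steinberg symbol $\{f,g\} \in K_2(A((t)))$. Applying the index map and invoking Theorem \ref{thm:index_boundary} identifies $\Index\{f,g\}_{\Tate}$ up to sign with a $K$-theory boundary morphism, which in turn, via the exact functor above, matches the Quillen boundary $\partial\colon K_2(A((t))) \to K_1(A)$ for the localization $A[[t]] \hookrightarrow A((t))$.

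Combining these identifications, $f \star_{\phi_{KM}} g$ equals $\det(\pm\partial(\{f,g\}))$ in $A^{\times}$. By Theorem \ref{thm:ccompare2}, we have $\det(\partial(\{f,g\})) = (f,g)^{-1}$, so the proposition follows as soon as the signs are tracked correctly: the passage from the additive group $K_1(A)$ to the multiplicative group $A^{\times}$ via $\det$ converts the sign in Theorem \ref{thm:index_boundary} into an inversion, and this is compensated by the factor $(-1)^{\nu(f)\nu(g)}$ that enters via the graded symmetry constraint underlying $\phi_{KM}$ (Corollary \ref{cor:graded}), consistent with the sign appearing in the formula \eqref{eqn:big_formula}.

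The main obstacle is the careful verification that the two boundary maps (the Tate-object boundary for $P_f(A) \hookrightarrow \Inda(P_f(A)) \twoheadrightarrow \Inda(P_f(A))/P_f(A)$ and the Quillen boundary for $A[[t]] \subset A((t))$) are compatibly identified under $P \mapsto P \otimes_{A((t))} \Vv_A$. This is a naturality statement for localization sequences, which follows from the universal properties of non-connective $K$-theory stipulated in Proposition \ref{prop:char_K}, applied to the map of exact sequences of stable $\infty$-categories coming from the inclusion of perfect $A((t))$-modules into Tate objects over $A$. Once this compatibility is in hand, the argument is a matter of assembling the cited results and tracking signs, with the graded structure of $\phi_{KM}$ and Corollary \ref{cor:graded} absorbing most of the sign bookkeeping.
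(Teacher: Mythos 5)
Your argument has a circularity problem at its final step. You reduce $f \star_{\phi_{KM}} g$ to $\det(\pm\partial\{f,g\})$ and then invoke Theorem \ref{thm:ccompare2} to conclude $\det(\partial\{f,g\}) = (f,g)^{-1}$. But within this paper Theorem \ref{thm:ccompare2} is not proved independently: it is the $n=1$ instance of Theorem \ref{thm:first}, which is obtained by combining the higher-commutator definition of the symbol with precisely Proposition \ref{prop:CC_commutator} (for the comparison with the classical Contou-Carr\`ere symbol) and the boundary-map description. So the statement you are asked to prove is an input to, not a consequence of, Theorem \ref{thm:ccompare2}, and your proposal never supplies the genuinely new content: an identification of the commutator of the extension with the explicit Contou-Carr\`ere formula \eqref{eqn:big_formula} (or with some previously established description of it). The paper closes this gap by citation: Proposition 3.3 of Beilinson--Bloch--Esnault verifies that the \emph{classical} Kac--Moody extension (defined via determinant lines) has commutator equal to $(-,-)^{-1}$, and \cite[Par.\ 5.2.3]{Braunling:2014vn} identifies the index-map extension $\phi_{KM}$ with that classical extension. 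Your reduction of the commutator to $\det\circ\partial$ of the Steinberg symbol is fine in outline (it is essentially Theorem \ref{thm:index_boundary} plus naturality of localization sequences), but by itself it proves a different statement unless you import an independent proof of $\det(\partial\{f,g\})=(f,g)^{-1}$, e.g.\ the Osipov--Zhu argument or a direct computation against \eqref{eqn:big_formula} --- neither of which you provide.

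A secondary weakness is the sign bookkeeping. You assert that the $-\partial$ in Theorem \ref{thm:index_boundary} is ``compensated'' by the factor $(-1)^{\nu(f)\nu(g)}$ coming from the graded symmetry constraint (Corollary \ref{cor:graded}); these are two different kinds of signs (an inversion in the target group $A^{\times}$ versus a graded sign depending on winding numbers), and they cannot cancel one another in general. The inverse in the statement $(-,-)^{-1} = -\star_{\phi_{KM}}-$ has to be traced carefully through the conventions for the index map and the boundary map, and your proposal treats this as routine when it is exactly where the normalization of the comparison lives.
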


\begin{proof}
Proposition 3.3 of \cite{MR1988970} verifies that the classical notion of the Kac--Moody extension of loop groups has this property. In \cite[\S 5.3, and Prop. 5.3]{Braunling:2014vn} we compare the extension $\phi_{KM}$ with its classical definition in terms of determinant lines.
\end{proof}

\subsection{Higher Contou-Carr\`ere Symbols}\label{sub:higher}

We begin this Subsection with a definition, in order to avoid the cumbersome notation $A((t_1))\cdots((t_n))^{\times}$.

\begin{definition}\label{defi:k-fold_loop_group}
The $n$-fold loop group $L^nG$ of a group-valued presheaf $G$ is defined to be the group-valued presheaf which sends the affine scheme $\Spec A$ to $G(A((t_1))\cdots((t_n)))$.
\end{definition}

There is an analogue of the Kac--Moody extension for loop groups. Denoting by $\Vv_A^n$ the $n$-Tate object $A((t_1))\dots((t_n))$ in $\nTate(P_f(A))$, we have a natural map
$$L^n\G_m(A) \to \Aut(\Vv_A^n)$$
for every $k$-algebra $A$. The index map gives rise to a spectral extension
\begin{equation}\label{spectralcan}
\Sigma^{\infty}_+BL^n\G_m(A) \to \Sigma^{\infty}_+B\Aut(\Vv_A^n) \to \Kb_{\nTate(P_f(A))} \to^{\Index^n} \Sigma^{n}\Kb_{P_f(A)}
\end{equation}
of $L^n\G_m(A)$ by $\Sigma^{n-2}\Kb_{P_f(A)}$. As above, the construction is natural in maps $A\to A'$, so it defines a central extension of sheaves in groups.

\begin{definition}\label{defi:higher_Kac_Moody}
The spectral extension \eqref{spectralcan} of $L^n\G_m$ by $\Sigma^{n-2}\Kb$ will be referred to as the {\em canonical spectral extension} of the $n$-fold loop group $L^n\G_m$. We denote the corresponding map of spectra by $e_n$.
\end{definition}

As an application of this construction we give a definition of higher Contou-Carr\`ere symbols.

\begin{definition}\label{defi:higher_CC}
Let $f_0,\dots,f_n \in L^n\G_m(A) = A((t_1))\dots((t_n))^{\times}$. We denote by $\det$ the determinant map $K_1(A) \to A^{\grp}$. The Contou-Carr\`ere symbol $(f_0,\dots,f_n)$ is defined to be the higher commutator
$$\det((f_0\star \dots \star f_n)_{e_n}^{(-1)^n}).$$
\end{definition}

The study of the higher Contou-Carr\`ere symbol $(f_0,\dots,f_n)$ for an $(n+1)$-tuple in $A((t_1))\dots((t_n))$, with $A$ a $k$-algebra, has been pioneered by Osipov--Zhu in the case of $n = 2$ (see \cite{Osipov:2013fk}). They identified this symbol with a higher commutator in a central extension of the double loop group $L^2\G_m$ by $B^{\mathbb{Z}}\G_m$. Inspired by this observation and the one-dimensional case (Proposition \ref{prop:CC_commutator}), they define the two-dimensional Contou-Carr\`ere symbol for general $k$-algebras $A$ as a higher commutator $C_3(f,g,h)$.

\begin{proposition}\label{prop:OZ_CC}
    Definition \ref{defi:higher_CC} is compatible with the definition of Contou-Carr\`ere in dimension $1$, and Osipov--Zhu in dimension $2$.
\end{proposition}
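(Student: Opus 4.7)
I treat the two dimensions separately, in each case reducing to one of the earlier comparison results.

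\emph{Dimension one.} The proof is essentially an unraveling of definitions combined with Proposition \ref{prop:CC_commutator}. By construction of $\phi_{KM}$ in Definition \ref{defi:KacMoody}, we have a commutative diagram of spectra
\[
\xymatrix{
\Sigma^\infty BL\G_m(A) \ar[r]^{e_1} \ar[rd]_{\phi_{KM}} & \Sigma\Kb_{P_f(A)} \ar[d]^{\Sigma\det} \\
& \Sigma\mathbb{B}^{\mathbb{Z}}\G_m(A),
}
\]
where the vertical map comes from the determinant $\Kk_A\to \mathbb{B}^{\mathbb{Z}}\G_m(A)$ after noting that the relevant target is connective (Remark \ref{rmk:truncate}). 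Higher commutators are natural under maps of spectra: if $e'=\alpha\circ e$ with $\alpha\colon \mathbb{E}\to \mathbb{E}'$, then $(g_1\star\cdots\star g_n)_{e'}=\pi_{n-2}(\alpha)(g_1\star\cdots\star g_n)_{e}$, since the defining map (Definition \ref{defi:higher_commutators}) just post-composes with $\alpha$. Applied to the above diagram this yields $\det((f_0\star f_1)_{e_1})=(f_0\star_{\phi_{KM}}f_1)$. Proposition \ref{prop:CC_commutator} now gives $(f_0,f_1)^{-1}=\det((f_0\star f_1)_{e_1})$, which is exactly Definition \ref{defi:higher_CC} with $n=1$.

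\emph{Dimension two.} The plan is to reduce to Proposition \ref{prop:OZ_C3}, which identifies the triple higher commutator of a spectral extension coming from a Picard groupoid extension with Osipov--Zhu's $C_3$. Concretely, I wish to produce a map of spectra $\alpha\colon\Kb_A\to\mathbb{B}^{\mathbb{Z}}\G_m(A)$ and a homotopy
\[
\Sigma^2 \alpha\circ e_2 \;\simeq\; \Sigma\,\tilde{\phi}_{OZ},
\]
where $\tilde{\phi}_{OZ}\colon \Sigma^\infty BL^2\G_m\to \Sigma\mathbb{B}^{\mathbb{Z}}\G_m$ is the spectral map classifying Osipov--Zhu's central extension of $L^2\G_m(A)$ by the Picard groupoid of graded lines. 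The existence of $\alpha$ on the level of spectra is the composite $\Kb_A\to \tau_{\geq 0}\tau_{\leq 1}\Kb_A=\mathbb{B}^{\mathbb{Z}}\G_m(A)$ (rank and determinant); once the homotopy above is established, naturality of higher commutators together with Proposition \ref{prop:OZ_C3} gives
\[
\det\bigl((f_0\star f_1\star f_2)_{e_2}\bigr) \;=\; (f_0\star_{\tilde{\phi}_{OZ}} f_1\star f_2) \;=\; C_3(f_0,f_1,f_2),
\]
and since $(-1)^n=1$ for $n=2$, this matches Definition \ref{defi:higher_CC} and the Osipov--Zhu symbol.

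\emph{Where the effort sits.} The dimension one assertion is essentially formal. The real work is in establishing the homotopy $\Sigma^2\alpha\circ e_2\simeq \Sigma\tilde{\phi}_{OZ}$. Both maps arise from ``determinantal gerbes'' associated with the $2$-Tate space $A((t_1))((t_2))$: ours via the iterated index map $\Kb_{2\textsf{-}\Tate(P_f(A))}\xrightarrow{\Index^2}\Sigma^2\Kb_{P_f(A)}$ of Definition \ref{defi:index_boundary}, truncated to the Picard groupoid, while Osipov--Zhu's is constructed by hand on the Sato Grassmannian of lattices in the $2$-Tate space using graded lines. The comparison proceeds by identifying, at each stage of the iteration, the graded line associated to a pair of lattices with the truncation of the formal difference construction recalled before Remark \ref{rmk:combinatorial}. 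This is parallel to the $1$-dimensional comparison of the spectral Kac--Moody extension with the classical determinantal line extension carried out in \cite[Par. 5.2.3]{Braunling:2014vn}, and I would reduce the $2$-dimensional case to the $1$-dimensional one by invoking the recursion from Lemma \ref{lemma:recursion}: slant-producting both spectral extensions with a fixed element $f_0\in L^2\G_m(A)$ reduces the verification to comparing two extensions of the centralizer $C(f_0)$ in the $1$-Tate setting, where the comparison in \cite{Braunling:2014vn} applies after base change to $A((t_1))$. The only obstacle is keeping signs and gradings straight in this inductive step; the substantive identification is already contained in the one-dimensional comparison.
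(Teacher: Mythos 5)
Your dimension-one argument is essentially the paper's: reduce via naturality of commutators to Proposition \ref{prop:CC_commutator}, which is exactly what the paper does. For dimension two, however, you take a genuinely different route, and it has a gap worth flagging.

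The paper does not reduce to the one-dimensional case via Lemma \ref{lemma:recursion}. Instead, it (i) first constructs the comparison map to $\Sigma^2\mathbb{B}^{\mathbb{Z}}\G_m$, and (ii) then directly identifies the resulting monoidal map $\phi\colon L^2\G_m\to BB^{\mathbb{Z}}\G_m$ with (the dual of) Osipov--Zhu's formula by the explicit description of the index map in terms of relative determinant lines $\det^{\mathbb{Z}}(N/\phi L)\otimes\det^{\mathbb{Z}}(N/L)^{\vee}$ from \cite[p. 35 \& Cor. 4.28]{Braunling:2014vn}. Your route via recursion is not unreasonable, but it would still require verifying that the slanted extension $e_2\langle f_0\rangle$ agrees with the one-dimensional Kac--Moody extension of $C(f_0)$ over $A((t_1))$; this is exactly the type of explicit identification the paper does by hand, so the inductive approach does not obviously avoid the computation.

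The more substantive issue is step (i): your proposed $\alpha\colon\Kb_A\to\mathbb{B}^{\mathbb{Z}}\G_m(A)$ ``as the composite $\Kb_A\to\tau_{\geq 0}\tau_{\leq 1}\Kb_A$'' does not exist. Truncation gives a map $\tau_{\geq 0}\Kb_A=\Kk_A\to\Kb_A$, not the other way, and $\det^{\mathbb{Z}}$ is only defined on the connective cover. Since $e_2$ lands in $\Sigma^2\Kb_A$ (non-connective), one cannot simply post-compose with $\det^{\mathbb{Z}}$; the potential obstruction is the negative $K$-theory of $A$. The paper resolves this precisely by invoking the Nisnevich-local vanishing of $K_{-1}$ \cite[Thm. 3.7]{MR2181808} and Nisnevich descent, to reduce to rings $A$ with $K_{-1}(A)=0$, for which the factorization of $e_2$ through $\Sigma^2\Kk_A$ (and hence the comparison map) exists. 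Without this descent argument, or some replacement for it, your homotopy $\Sigma^2\alpha\circ e_2\simeq\Sigma\tilde{\phi}_{OZ}$ does not have well-defined sides. Finally, note that the paper also records a sign discrepancy (their $\phi$ is $-1$ times Osipov--Zhu's), which your account would need to track explicitly.
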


The proof of the $1$-dimensional case was the content of Proposition \ref{prop:CC_commutator}. We now turn to verifying the assertion for $n = 2$.

\begin{proof}[Proof of the $2$-dimensional case:]
Osipov--Zhu construct a central extension of $L^2\G_m$ by the Picard groupoid $B^{\mathbb{Z}}\G_m$ (\cite[p. 28]{Osipov:2013fk}), and define $(f,g,h)$ for a triple in $A((t_1))((t_2))^{\times}$, as the higher commutator $C_3(f,g,h)$. We have seen in Proposition \ref{prop:OZ_C3} that $f \star g \star h = C_3(f,g,h)$. So to conclude the assertion, we need to verify that for $n = 2$ the spectral extension of $L^n\G_m$ constructed in Definition \ref{defi:higher_Kac_Moody} is related to the extension
$$\Sigma_+^{\infty}BL^2\G_m \to \Sigma^2\mathbb{B}^{\mathbb{Z}}\G_m.$$
constructed by Osipov--Zhu.

By Nisnevich descent, it suffices to consider rings $A$ with $K_{-1}(A) = 0$. We then have a commutative diagram
\[
\xymatrix{
\Sigma_+^{\infty}BL^2\G_m(A) \ar[r] \ar[rd]_{e_2} & \Sigma^2\Kk_{A} \ar[d] \ar[r]^-{\det^{\mathbb{Z}}} & \Sigma^2\mathbb{B}^{\mathbb{Z}}\G_m(A) \\
& \Sigma^2 \Kb_{A}. &
}
\]
Using the adjunction between $\Sigma_+^{\infty}$ and $\Omega^{\infty}$, we obtain a map
$$e_{2}\colon BL^2\G_m \to B^2B^{\mathbb{Z}}\G_m.$$
Picking a basepoint in $BL^2\G_m$ and looping once yields an $E_1$-map to the classifying space of the Picard groupoid of graded lines $B^{\mathbb{Z}}\G_m$
$$\phi\colon L^2\G_m \to BB^{\mathbb{Z}}\G_m.$$
We have to show that this morphism is $-1$ times of the one constructed by Osipov--Zhu. According to \cite[Prop. 3.28 \& Thm. 3.31]{BGWSegal}, $\phi$ sends $f \in L^2\G_m(A)$ to
$$\det{}^{\mathbb{Z}}(N/f L)  \otimes \det{}^{\mathbb{Z}}(N/L)^{\vee},$$
for $N$ a lattice containing both $f L$ and $L$, with the monoidal structure being defined in terms of common enveloping lattices. This is precisely the dual of the definition given by Osipov--Zhu \cite[p. 28]{Osipov:2013fk}.
\end{proof}

The comparison of the generalized Contou-Carr\`ere symbol with the classical cases in dimension $1$ and $2$ already shows that our definition produces a non-trivial map in these dimensions. We will explain why this is also the case in general.

\begin{rmk}
Let $k \subset k'$ be a field extension and $A \to^{\phi} k'$ a ring homomorphism. Since our constructions are functorial in the $k$-algebra $A$ we see that
$$\phi\left((f_0,\dots,f_n)\right) = \left(\phi(f_0),\dots,\phi(f_n)\right)$$
for $f_0,\dots,f_n \in L^n\G_m(A)$. If we choose $f_i = t_{i+1}$ for $i = 0,\dots,n-1$ and $f_n \in k^{\times}$ we obtain $\phi((f_0,\dots,f_n)) = \phi(f_n)$. This follows from Corollary \ref{cor:CCtame} below, which asserts that the higher Contou-Carr\`ere symbol for $A$ a field agrees with the tame symbol.
\end{rmk}

After these preparations, let us return to geometry. Let $X$ be a Noetherian $k$-scheme and $\xi$ a saturated flag of integral closed subschemes $Z_i$. Moreover, suppose we are given a $k$-algebra $A$. Equipped with this data, we defined objects $F_{X,\xi}$ and $A_{X,\xi}$ in \S \ref{subsub:reminder_adeles}.

By Theorem 7.10 of \cite{MR3510209}, the object $F_{X,\xi}$ carries a canonical structure of a higher Tate object. In particular, we see that  $F_{X,\xi}$ gives rise to an $n$-Tate object $\underline{F}_{X,\xi}$ in the abelian category $\Coh_{Z_0}(X)$ (coherent sheaves on $X$, set-theoretically supported at $Z_0$). If $X$ is defined over a field $k$, then, because $Z_0$ is $0$-dimensional, global sections give rise to an exact functor
$$\Gamma(X,-)\colon \nTate(\Coh_{Z_0} X) \to \nTate(k).$$
Thus, $F_{X,\xi}$ gives rise to an $n$-Tate object in the category of finite-dimensional vector spaces over $k$. If $A$ is an arbitrary $k$-algebra, the tensor product $- \otimes_k A\colon \Vect_f(k) \to P_f(A)$ determines an exact functor
$$- \widehat{\otimes}_k A\colon \nTate(k) \to \nTate(A).$$
\begin{definition}\label{defi:AXxi}
Let $X$, $\xi$, $k$, and $A$ be as described earlier. We define
$$\underline{A}_{X,\xi} = F_{X,\xi} \widehat{\otimes}_k A. $$
The $A$-module underlying $\underline{A}_{X,\xi}$ (via the forgetful functor $\nTate(A)\to\Mod(A)$) inherits a $k$-algebra structure from $F_{X,\xi}$; we denote this $k$-algebra by $A_{X,\xi}$. For a group scheme $G$ over $k$, we define the iterated loop group at $(X,\xi)$ to be the group-valued presheaf given by
$$L^n_{X,\xi}G(A) = G(A_{X,\xi}).$$
\end{definition}

By definition, we have $L^n_{X,\xi}\G_m = F_{X,\xi}^\times$. 
\begin{example}
If $X = \mathbb{A}^n_k= \Spec k[t_1,\dots,t_n]$, and $Z_k = \Spec k[t_1,\dots,t_k]$, then we have $A_{X,\xi} = A((t_1))\dots((t_n))$, and $L^n_{X,\xi}\G_m \cong L^n\G_m$.
\end{example}

Note that for any ring $R$, the exact category of finitely-generated projective modules $P_f(R)$ is the idempotent completion of the exact category of finitely-generated free $R$-modules. Therefore, any exact functor $\phi\colon P_f(R) \to \C$, into any idempotent complete exact category $\C$, is determined by $\phi(R)$ up to equivalence.

\begin{definition}[Spectral Contou-Carr\`ere symbol]\label{defi:spectral_CC}
Let $T\colon P_f(A_{X,\xi}) \to \nTate(A)$ be the unique functor sending $A_{X,\xi}$ to $\underline{A}_{X,\xi}$. The composition
$$\sigma_{X,\xi}^A\colon\Kb_{A_{X,\xi}} \xrightarrow{T} \Kb_{\nTate(A)} \xrightarrow{(-1)^n\Index^n} \Sigma^n\Kb_A$$
will be referred to as the \emph{spectral Contou-Carr\`ere symbol}.
\end{definition}

Replacing $K$-theory by $G$-theory (i.e. working with all coherent sheaves instead of only locally free ones), we obtain an analogous \emph{spectrification} of the tame symbol.

\begin{definition}\label{defi:spectral_tame}
Let $T\colon P_f(F_{X,\xi}) \to \nTate(\Coh_{Z_0}(X))$ be the unique functor sending $F_{X,\xi}$ to $\underline{F}_{X,\xi}$. The composition
$$\sigma_{X,\xi}\colon\Kb_{F_{X,\xi}} \xrightarrow{T} \Kb_{\nTate(\Coh_{Z_0}(X))} \xrightarrow{(-1)^n\Index^n} \Sigma^n\Gb_{X,Z_{0}} \xrightarrow{\cong} \Sigma^n \Kb_{Z_0} \to \Sigma^n\Kb_k$$
will be referred to as the \emph{spectral tame symbol}.
\end{definition}

Switching back to the Contou-Carr\`ere setup, we can use higher commutators to extract Contou-Carr\`ere symbols from the morphism of spectra in Definition \ref{defi:spectral_CC}.

\begin{definition}\label{defi:CC_flag}
Denote by $\det\colon K_1(A)\to A^\times$ the map induced by the determinant of matrices. For an $n+1$-tuple $f_0,\dots,f_n \in A_{X,\xi}^{\times}$ we define
$$(f_0,\dots,f_n) = \det((f_0\star \dots \star f_n)_{\sigma^A_{X,\xi}}),$$
and refer to this expression as the \emph{Contou-Carr\`ere symbol of $X$ at $\xi$}.
\end{definition}

\section{Comparison of both definitions}\label{symbolsviaboundarymap}

This section is devoted to linking higher Contou-Carr\`ere symbols to their classical counterparts.

%
%

\subsection{$K$-theory and Tate categories}\label{sub:delooping}

\subsubsection{For Exact Categories}

Schlichting developed a localization theorem for exact categories in \cite{MR2079996}, which states that, for every left (respectively right) s-filtering inclusion of exact categories
$\C \hookrightarrow \D,$
the quotient category $\D/\C$ carries a natural structure as an exact category. Moreover, attaching their associated stable $\infty$-categories to them as in Definition \ref{defi:perf},
\begin{equation}\label{eq14}
\xymatrix{
\Perf(\C) \ar@{^(->}[r]^-i & \Perf(\D) \ar@{->>}[r]^-q & \Perf(\D/\C)
}
\end{equation}
becomes an exact sequence of stable $\infty$-categories. Further, if $\C$ is idempotent complete, then by Propositon \ref{prop:char_K} (3), we obtain a bi-cartesian square of spectra
\[
\xymatrix{
\Kb_{\C} \ar[r]^i \ar[d] & \Kb_{\D} \ar[d]^q \\
0 \ar[r] & \Kb_{\D/\C},
}
\]
where the relation $\Kb_{\C} = \Kb_{\Perf(\C)}$ holds essentially by definition.
Schlichting observed that if one chooses $\D$ such that $\Kb_{\D} \cong 0$, then the boundary morphism of this square gives an equivalence
$\partial\colon \Kb_{\D/\C} \cong \Sigma \Kb_{\C}.$
Propositon \ref{prop:char_K} (1) guarantees that $\Kb_{\Inda(\C)} \cong 0$ for every exact category $\C$. Thus, we see that, for $\C$ idempotent complete, we have a canonical delooping
$\Kb_{\Inda(\C)/\C} \cong \Sigma \Kb_{\C}.$
Using similar techniques, Saito establishes an abstract equivalence $\Kb_{\Tate(\C)} \cong \Sigma \Kb_{\C}$ in \cite{Saito:2012fk}. In fact, this equivalence can be constructed as the composition
\begin{equation} \label{eq:ccx1}
\varphi : \Kb_{\Tate(\C)} \cong \Kb_{\elTate(\C)} \cong \Kb_{\elTate(\C)/\Proa(\C)} \cong \Kb_{\Inda(\C)/\C},
\end{equation}
followed by
$$\Kb_{\Inda(\C)/\C} \cong \Sigma\Kb_{\C}.$$
In the first row, the first equivalence follows from the cofinal invariance of non-connective $K$-theory (i.e. (2) of Proposition \ref{prop:char_K}). The second map is an equivalence as a corollary of the aforementioned localization theorem, and the third equivalence exists already on the level of exact categories (e.g. \cite[Prop. 5.32]{MR3510209}).

The index map of Definition \ref{defi:index} is an explicit description of these boundary maps. See \cite[Thm. 3.6]{Braunling:2014vn} for the proof.

\begin{theorem}\label{thm:index_boundary}
Let $\C$ be an idempotent complete exact category. The exact equivalence of exact categories $q:\elTate(\C)/\Pro(\C) \cong \Inda(\C)/\C$ (see \cite[Prop. 5.32]{MR3510209}) induces a commutative diagram
\[
\xymatrix{
\Sigma_+^{\infty}{\elTate(\C)^{\grp}}\ar[rr]^(.6){\Index} \ar[d]_{q} && \Sigma\Kb_{\C} \\
\Kb_{\Ind^a(\C)/\C}, \ar[urr]_{-\partial} &
}
\]
where both $\mathsf{Index}$ and $\partial$ arise as the boundary maps of the localization sequences discussed above.
\end{theorem}

This theorem motivates the following definition of the \emph{non-connective} index map.

\begin{definition}\label{defi:index_boundary}
For an idempotent complete exact category $\C$, we define the map $\Index\colon \Kb_{\Tate(\C)} \to \Sigma \Kb_{\C}$ as the composition so that the diagram
\[
\xymatrix{
\Kb_{\Tate(\C)}\ar[r]^{\Index} \ar[d]_{\varphi } & \Sigma\Kb_{\C} \\
\Kb_{\Ind^a(\C)/\C}. \ar[ur]_{-\partial} &
}
\]
commutes, where $\varphi$ is the map of Equation \eqref{eq:ccx1}.
\end{definition}

\subsubsection{Suspension and Calkin Objects for Stable $\infty$-Categories}

Let $\C$ be a stable $\infty$-category, and $\kappa$ an infinite cardinal. Recall Definition \ref{defi:suspension}, which defines the \emph{suspension} $\Ss_{\kappa}(\C)$ as the localization
$$\Ss_{\kappa}(\C) = \Ind_{\kappa}(\C)/\C,$$
and which defines the $\infty$-category of Calkin objects $\Calk(\C)$ as the idempotent completion of the suspension.

Since non-connective $K$-theory cannot distinguish between a category and its idempotent completion (see (2) of Proposition \ref{prop:char_K}):
$\Kb_{\Calk_{\kappa}(\C)} \cong \Kb_{\Ss_{\kappa}(\C)} \cong \Sigma\Kb_{\C},$
we will often omit the cardinal $\kappa$ from our notation. Following Schlichting \cite{MR2079996}, Blumberg--Gepner--Tabuada \cite{MR3070515} observed the following delooping property for $K$-theory introduced in \eqref{eqn:ncK}.

\begin{proposition}\label{prop:suspension}
The boundary map $\partial$ of the localization sequence of the exact sequence $$\C \hookrightarrow \Ind(\C) \twoheadrightarrow \Ss(\C)$$ of stable $\infty$-categories, induces an equivalence of non-connective $K$-theory spectra $\partial\colon\Kb_{\Calk(\C)} \cong \Sigma\Kb_{\C}$.
\end{proposition}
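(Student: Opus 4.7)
The plan is to reduce the statement to a direct application of the three characterizing axioms of non-connective $K$-theory in Proposition \ref{prop:char_K}. The exact sequence of stable $\infty$-categories
$$\C \hookrightarrow \Ind(\C) \twoheadrightarrow \Ss(\C)$$
is, by Definition \ref{defi:suspension}, the very sequence that defines $\Ss(\C)$. Applying axiom (3) of $\Kb_-$, this yields a bicartesian square of spectra
$$\xymatrix{
\Kb_{\C} \ar[r] \ar[d] & \Kb_{\Ind(\C)} \ar[d] \\
0 \ar[r] & \Kb_{\Ss(\C)}.
}$$
The boundary map of this square, in the sense introduced in Subsection \ref{sub:delooping}, is by definition the map $\partial\colon \Kb_{\Ss(\C)} \to \Sigma \Kb_{\C}$.

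Next I would verify that $\Kb_{\Ind(\C)}\simeq 0$, which is what forces $\partial$ to be an equivalence. Since $\Ind_{\kappa}(\C)$ is the free cocompletion of $\C$ under $\kappa$-filtered colimits, it in particular admits countable coproducts: in the stable setting one realises $\bigoplus_{n\ge 0} X$ as the filtered colimit of the finite coproducts $\bigoplus_{n=0}^{N} X$. Axiom (1) of Proposition \ref{prop:char_K} (the spectral Eilenberg swindle) then gives $\Kb_{\Ind(\C)}\cong 0$. Combined with the bicartesian square above, this identifies $\Kb_{\Ss(\C)}$ with the cofiber of $\Kb_{\C}\to 0$, namely $\Sigma\Kb_{\C}$, and realises this identification as $\partial$.

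Finally, to replace $\Ss(\C)$ by $\Calk(\C) = \Ss(\C)^{\ic}$, I invoke axiom (2) of Proposition \ref{prop:char_K}: the inclusion $\Ss(\C)\hookrightarrow \Calk(\C)$ into the idempotent completion induces an equivalence $\Kb_{\Ss(\C)} \xrightarrow{\cong} \Kb_{\Calk(\C)}$. Composing with the equivalence of the previous step delivers the asserted equivalence $\partial\colon \Kb_{\Calk(\C)} \xrightarrow{\cong} \Sigma \Kb_{\C}$.

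Because all the difficult work has been packaged into the axiomatic characterisation of $\Kb_-$, there is no real obstacle here — the proposition is essentially an unpacking of definitions. The only point at which one might pause is the verification that the $\infty$-category $\Ind_{\kappa}(\C)$ truly supports the Eilenberg swindle; but this is immediate from the cocompletion description for any infinite cardinal $\kappa$, so the argument is complete.
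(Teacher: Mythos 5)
Your argument is correct and coincides with the paper's own route: the paper (following Schlichting and Blumberg--Gepner--Tabuada) deduces the statement from exactly the same three axioms of Proposition \ref{prop:char_K} — the Eilenberg swindle for $\Ind(\C)$, the bicartesian square from the localization $\C \hookrightarrow \Ind(\C) \twoheadrightarrow \Ss(\C)$, and invariance under idempotent completion to pass from $\Ss(\C)$ to $\Calk(\C)$. Nothing is missing; your aside on realising countable coproducts in $\Ind_{\kappa}(\C)$ as filtered colimits of finite coproducts is the same justification the paper uses when it says ``$\Ind(\C)$ admits countable coproducts.''
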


This result serves as a motivation to call $\Ss(\C)$ the suspension of $\C$. Recall that the suspension of a topological space $X$ is formed by embedding $X$ into the cone $CX$, which is \emph{contractible}. The resulting homotopy cofibre, obtained by taking the quotient space, is one possible incarnation of the suspension.  By analogy, Schlichting embeds a category $\C$ into an ambient $K$-contractible category $\Ind(\C)$, and takes the quotient to obtain the categorical suspension. A second possibility is to construct the suspension of $X$ by glueing a second copy $C^{-}X$ to the cone $CX$ along the common subspace $X$. Since $C^-X$ is contractible, this yields a homotopy equivalent space. Categorically this is analogous to pasting the $K$-contractible categories $\Inda(\C)$ and $\Proa(\C)$ along the common subcategory $\C$. This is the underlying idea of Saito's delooping statement. For later use, we record the following naturality property.

\begin{lemma}[Naturality]\label{lemma:S_naturality}
For every idempotent complete exact category $\C$, there exists a commutative diagram
\[
\xymatrix{
\Kb_{\Inda(\C)/\C} \ar[d]_{\partial } \ar[r]^-{\cong} & \Kb_{\Calk(\Perf(\C))} \ar[d]^{\partial } \\
\Sigma \Kb_{\C} \ar[r]^{\cong} & \Sigma\Kb_{\Perf(\C)}
}
\]
of spectra, where the horizontal maps $\Kb_{\C} \cong \Kb_{\Perf(\C)}$ are the equivalences stipulated by Lemma \ref{lemma:exact_comparison}, and the vertical maps are the equivalences coming from the boundary maps of the localization sequence discussed above.
\end{lemma}

\subsection{Comparison}

\begin{theorem}\label{thm:CCboundary}
Let $X$ be a Noetherian $k$-scheme, and $\xi$ a saturated flag of closed subschemes $Z_i$. For every $k$-algebra $A$, the spectral Contou-Carr\`ere symbol $\sigma_{X,\xi}^A$ of Definition \ref{defi:spectral_CC} agrees with the $n$-fold delooping of the preliminary Contou-Carr\`ere symbol of Definition \ref{def:prelimCCsymbol}.
\end{theorem}

The proof of this result will be given in the next paragraph. It uses the concept of realization functors which we will subsequently introduce. We conclude:

\begin{corollary}\label{cor:CCtame}
For $A$ a field, the Contou-Carr\`ere symbol agrees with the higher tame symbol for algebraic $K$-theory.
\end{corollary}

\subsection{Contou-Carr\`ere Symbols and Realization Functors}\label{sub:real}

\begin{definition}[Tate realization]\label{defi:tate_real}
Let $X$ be a Noetherian scheme, and $j\colon U \hookrightarrow X$ an open immersion, with complement denoted by $Z$. Let $W \supset Z$ be a closed subscheme of $X$, such that the open immersion $U \cap W \hookrightarrow W$ is affine.\footnote{In Lemma \ref{lemma:affineW} (a) we show that this is condition only depends on the underlying closed subspace $|W|$.} Then, we have exact functors $\ind$, $\pro$, and $\tate$, defined as follows.
\begin{itemize}
\item[(a)] The functor $\ind\colon \Coh_{|W| \cap U}(U) \to \Ind^a(\Coh_{|W|}(X))$ sends $\F \in \Coh_{|W|\cap U}(U)$ to $j_*\F$, viewed as an ascending union of coherent sheaves on $X$ with set-theoretic support in $|W|$.
\item[(b)] We denote by $i_n\colon Z^{(n)} \to X$ the inclusion of the $n$-th order infinitesimal neighbourhood of $Z$. We define $\pro\colon \Coh_W(X) \to \Pro^a(\Coh_{|Z|}(X))$ to be the functor sending $\F \in \Coh_{|W|}(X)$ to the Pro-system $\left((i_n)_*i_n^*\F\right)_{n \in \mathbb{N}}$.
\item[(c)] Combining (a) and (b) we obtain a functor $$\tate\colon \Coh_{|W|}(U) \to  \Ind^a\Pro^a(\Coh_{|Z|}(X)).$$
\end{itemize}
\end{definition}

\begin{rmk}\label{rmk:tate_real}
One can check that the functor $\tate$ of Definition \ref{defi:tate_real}(c) factors through $\elTate(\Coh_Z(X)) \subset \Ind^a\Pro^a(\Coh_Z(X))$. Indeed, for every $\F \in \Coh_{|W| \cap U}(U)$ we have a $4$-term exact sequence
$$\ker(\pro(\F) \to \tate(\F)) \hookrightarrow \pro(\F) \to \tate(\F) \twoheadrightarrow j_*j^*\F/\F.$$
The kernel on the left hand side is equivalent to $\ker(\F \to j_*j^*\F) \in \Coh_Z(X)$, hence the quotient of $\pro(\F)$ by this object lies again in $\Pro^a(\Coh_Z(X))$. The object on the right hand side $j_*j^*\F/\F$ lies in $\Ind^a(\Coh_Z(X))$. This allows us to represent $\tate(\F)$ as an extension of an admissible Ind-object by an admissible Pro object. Hence, $\tate(\F)$ has a lattice, i.e. is an elementary Tate object.
\end{rmk}

Definition \ref{defi:tate_real} contains the condition that the inclusion $U \cap W \hookrightarrow W$ is affine. It is important to note the following two observations.

\begin{lemma}\label{lemma:affineW} We have the following:
\begin{itemize}
\item[(a)] Let $W,W' \hookrightarrow X$ be closed subschemes of a separated Noetherian scheme $X$, satisfying $|W| = |W'|$. For an open subscheme $U \subset X$ we have that
$W \cap U \hookrightarrow W$ is affine if and only if $W' \cap U \hookrightarrow W'$ is affine.

\item[(b)] Let $W \hookrightarrow X$ be a closed immersion into a separated Noetherian scheme $X$, and $U \subset X$ an open subscheme with closed complement denoted by $|Z|$. If $|Z| \subset |W|$, and $\dim Z = \dim W - 1 = 0$, then the inclusion $W \cap U \hookrightarrow W$ is \emph{affine}.
\end{itemize}
\end{lemma}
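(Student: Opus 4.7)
For part (a), the plan is to invoke the relative form of Chevalley's theorem (the version cited in the preceding paragraph, from \cite{MR2356346}), which states that affineness of a morphism of schemes is invariant under nilpotent thickenings of the target. I would form the scheme-theoretic intersection $W'' := W \times_X W'$, which makes sense as a closed subscheme of $X$ by separatedness. Since $|W| = |W'|$ we have $|W''| = |W|$, so the closed immersions $W'' \hookrightarrow W$ and $W'' \hookrightarrow W'$ are cut out by ideal sheaves contained in the nilradicals of $\mathcal{O}_W$ and $\mathcal{O}_{W'}$; the Noetherian hypothesis ensures these nilradicals are nilpotent, so both closed immersions are nilpotent thickenings. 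Base-changing $W \cap U \hookrightarrow W$ along $W'' \hookrightarrow W$ produces $W'' \cap U \hookrightarrow W''$, and similarly for $W'$. Chevalley's theorem then identifies the three affineness conditions, yielding (a).

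For part (b), I would first apply (a) to reduce to the case in which $W$ is reduced. Because $|Z| \subseteq |W|$ and $\dim Z = 0$, the set $W \setminus (W \cap U)$ is a finite collection of closed points of $W$. Affineness of a morphism is Zariski-local on the target, so away from these points the assertion is trivial, while near each removed point $w$ one may pass to an affine open of $W$ containing $w$ and shrink to a basic open that avoids the other removed points. This reduces the task to the following auxiliary claim: for $R$ a reduced $1$-dimensional Noetherian ring and $\mathfrak{m}$ a height-one maximal ideal, the inclusion $\Spec R \setminus \{\mathfrak{m}\} \hookrightarrow \Spec R$ is affine.

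To establish the claim I would apply prime avoidance twice. Since $\mathfrak{m}$ is not contained in any of the finitely many minimal primes of $R$, one can first choose a non-zero-divisor $f \in \mathfrak{m}$. Then $V(f)$ is $0$-dimensional in the $1$-dimensional Noetherian scheme $\Spec R$, and hence is a finite set of closed points $\{\mathfrak{m}, \mathfrak{n}_1, \dots, \mathfrak{n}_k\}$. Second, since each $\mathfrak{n}_j$ differs from $\mathfrak{m}$, one may pick $g \in \bigcap_j \mathfrak{n}_j \setminus \mathfrak{m}$; on the basic affine neighborhood $D(g)$ of $\mathfrak{m}$ we have $V(f) \cap D(g) = \{\mathfrak{m}\}$, so $D(g) \setminus \{\mathfrak{m}\} = \Spec R[1/fg]$ is affine. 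Assembling these local affineness statements by locality on the target then completes (b). The main obstacle is concentrated in this auxiliary claim: because $\mathfrak{m}$ need not be locally principal (for example at non-regular points of $W$), one cannot in general realize $V(f) = \{\mathfrak{m}\}$ on the whole of $\Spec R$, and the argument relies crucially on the freedom to shrink to an open where the extra zeros of $f$ disappear.
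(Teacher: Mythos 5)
Your proof is correct; it coincides with the paper's in spirit for (a) and takes a genuinely more hands-on route for (b). For (a) the paper simply observes that $W$, $W'$ (and likewise $W\cap U$, $W'\cap U$) share a common underlying reduced subscheme and invokes Chevalley's theorem in the form ``a scheme is affine iff its reduction is affine''; your detour through the scheme-theoretic intersection $W\times_X W'$ together with the relative statement that affineness of a morphism is unaffected by nilpotent thickenings of the target uses exactly the same input, just packaged relatively (and note that separatedness is not actually needed to form the intersection). For (b) the paper replaces $W$ ``without loss of generality'' by the semilocal scheme $\Spec(\Oo_{W,z_1}\times\cdots\times\Oo_{W,z_k})$, where the complement of the closed points is the finite discrete set of generic points, hence a finite disjoint union of spectra of local rings and so affine; your argument instead stays inside $W$: since affineness is local on the target, you build around each removed point an honest basic affine open $D(g)$ with $D(g)\setminus\{\mathfrak{m}\}=D(fg)$, using prime avoidance twice (once to find a non-zero-divisor $f\in\mathfrak{m}$, once to cut $V(f)$ down to $\{\mathfrak{m}\}$ on $D(g)$). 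What your version buys is self-containedness: the paper's ``WLOG'' passage to the semilocalization, if one insists on full justification, requires either a limit/approximation argument or precisely the local analysis you carry out; what it costs is a slightly longer elementary computation. One small point to patch: your auxiliary claim assumes $\mathfrak{m}$ has height one, but under the stated hypotheses a removed point could be an isolated ($0$-dimensional) component of $W$, in which case there is no non-zero-divisor in $\mathfrak{m}$; this case is immediate, since $\{\mathfrak{m}\}$ is then clopen in a small affine neighbourhood and the punctured neighbourhood is a closed subscheme of an affine scheme — add a sentence to that effect (the paper's proof, which only speaks of generic points of the one-dimensional scheme $W$, silently skips this case too).
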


\begin{proof}
Assertion (a) follows from the fact that a scheme is affine if and only if the underlying reduced scheme is affine (which is a consequence of Chevalley's theorem, see \cite{MR2356346}). To verify (b), we observe that $Z = \{z_1,\dots,z_k\}$ is a discrete subset consisting of closed points (since it is of dimension $0$), and we may therefore replace $W$ without loss of generality by $\Spec (\Oo_{W,z_1} \times \cdots \times \Oo_{W,z_k})$. Then, the complement $W \cap U = W \setminus Z$ agrees with the discrete subset $\{\eta_1,\dots,\eta_m\}$, where each $\eta_i$ is a generic point of the one-dimensional scheme $W$. Since each of the inclusions $\{\eta_i\} \hookrightarrow X$ is affine, the same is true for
$$W \cap U = \coprod_{i = 1}^m\{\eta_i\} \hookrightarrow X,$$
since a finite coproduct of affine schemes is affine.
\end{proof}

We call these functors \emph{realization functors}, since they associate to a coherent sheaf on $U = X \setminus Z$ a Tate object in $\Coh_Z(X)$. For our purposes it will be necessary to have similar functors for \emph{perfect complexes} on $U$ at our disposal. This is achieved by the following definition. We denote the derived $\infty$-category of pseudo-coherent complexes of sheaves (resp. complexes of quasi-coherent sheaves) on a scheme $X$ by $\mathsf{DCoh}(X)$ (resp. $\QC(X)$).

\begin{definition}[Calkin realization]\label{defi:calkin_real}
Let $X$, $U$, $Z$ be quasi-compact and quasi-separated schemes, with $j\colon Z \hookrightarrow X$ a closed embedding, and $U = X \setminus Z$.
\begin{itemize}
\item[(a)]We denote by
$$\ind\colon \Perf(X) \to \Ind(\Perf_Z(X)) \cong \QC_Z(X)$$
the functor given by $\F \mapsto \mathrm{fib}(\F \to j_*j^*\F)$.
\item[(b)] Let $\calk\colon \Perf(U) \to \Calk(\Perf_Z(X))$ be the functor induced by $\ind$: $$\Perf(U)\simeq \left(\Perf(X)/\Perf_Z(X)\right)^{\ic} \to \left(\Ind(\Perf_Z(X))/\Perf_Z(X)\right)^{\ic}.$$
\item[(c)] The functors $\ind$ and $\calk$ have a version for the stable $\infty$-categories of pseudo-coherent complexes of sheaves:
$$\ind\colon \mathsf{DCoh}(X) \to \Ind(\mathsf{DCoh}_Z(X)),$$
and
$$\calk\colon \mathsf{DCoh}(U) \to \Calk(\mathsf{DCoh}_Z(X)).$$
\end{itemize}
\end{definition}

\begin{lemma}\label{lemma:calk_basechange}
Let $X$, $U$, and $j\colon Z\hookrightarrow X$ be as in Definition \ref{defi:calkin_real}. Let $X \to W$ be a morphism of schemes. For an affine flat morphism $f\colon V \to W$ we denote the base changes $X \times_W V$, $U \times_W V$, and $Z \times_W V$ by $X_V$, $U_V$, and $Z_V$. We then have a commutative diagram
\[
\xymatrix@=0.2in{
\mathsf{DCoh}(U) \ar[r]^-{\calk} \ar[d]_{f^*} & \Calk(\mathsf{DCoh}_Z(X)) \ar[d]^{f^*} \\
\mathsf{DCoh}(U_V) \ar[r]^-{\calk} & \Calk(\mathsf{DCoh}_{Z_V}(X_V)).
}
\]
\end{lemma}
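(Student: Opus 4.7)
The plan is to establish the commutative square by first lifting it to the ambient pseudo-coherent categories on $X$ and $X_V$, where one has a direct formula for the realization functor, and then descending through the localization and idempotent completion that define $\calk$.

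First I would check that affine flat base change interacts well with the structures in Definition \ref{defi:calkin_real}. Since $V \to W$ is affine and flat, its base change $f_X \colon X_V \to X$ is again affine and flat, and likewise for $f_U$. Consequently $f_X^\ast \colon \Coh^{der}(X) \to \Coh^{der}(X_V)$ is exact, preserves pseudo-coherence, sends $\Coh^{der}_Z(X)$ into $\Coh^{der}_{Z_V}(X_V)$, and, by the standard flat base change for an open immersion, satisfies a natural equivalence
\[
f_X^\ast \circ j_\ast \simeq (j_V)_\ast \circ f_U^\ast,
\]
where $j\colon U \hookrightarrow X$ and $j_V\colon U_V \hookrightarrow X_V$. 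Because the realization functor is defined by $\ind(\F) = \mathrm{fib}(\F \to j_\ast j^\ast \F)$ and exact functors in stable $\infty$-categories preserve fibres, this yields a commutative square
\[
\xymatrix{
\Coh^{der}(X) \ar[r]^-{\ind} \ar[d]_{f_X^\ast} & \Ind(\Coh^{der}_Z(X)) \ar[d]^{\Ind(f_X^\ast)} \\
\Coh^{der}(X_V) \ar[r]^-{\ind} & \Ind(\Coh^{der}_{Z_V}(X_V)).
}
\]

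Next I would descend this square through the localization $\Coh^{der}(X)/\Coh^{der}_Z(X) \simeq \Coh^{der}(U)$ of Thomason--Trobaugh. The equivalence is natural with respect to flat pullback (the restriction functor $j^\ast$ commutes with flat base change), hence the vertical pullbacks descend to $f_U^\ast\colon \Coh^{der}(U) \to \Coh^{der}(U_V)$ on the left and to $\Calk(f_X^\ast)$ on the right. Passing to idempotent completion on the right (which is functorial and compatible with $f_X^\ast$) then produces exactly the claimed commutative diagram of Calkin realizations.

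The only real point to verify is the flat base change equivalence $f_X^\ast j_\ast \simeq (j_V)_\ast f_U^\ast$ in the setting of pseudo-coherent complexes and quasi-compact quasi-separated schemes satisfying Assumption \ref{ass:scheme}; this is checked affine-locally on $X$, where it reduces to the standard fact that for a flat ring map $R \to R'$ and an element $s \in R$, one has $R'[s^{-1}] \simeq R' \otimes_R R[s^{-1}]$, and this tensor product commutes with cohomology of pseudo-coherent complexes because of flatness. Everything else -- functoriality of $\Ind$ and of idempotent completion under exact functors, and the universal property of the localization quotient -- is formal. I expect no substantive obstacle: the lemma is essentially a naturality statement, and the proof is, as the excerpt says, an exercise once the flat base change is granted.
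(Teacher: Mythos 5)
Your proof is correct and matches what the paper intends: the paper offers no argument beyond the remark that the Calkin realization is ``evidently'' compatible with flat base change, and your verification --- flat base change $f_X^*j_* \simeq (j_V)_*f_U^*$ for the affine flat map $f_X$, exactness of $f_X^*$ applied to the fibre defining $\ind$, then descent through the Verdier quotient and idempotent completion defining $\calk$ --- is precisely the formal naturality check being alluded to. The only cosmetic caveat is that the affine-local reduction should be phrased via the standard flat base change theorem for the quasi-compact open immersion $j$ (a finite \v{C}ech computation) rather than a single principal open, but as you note this is exactly the standard fact you are citing.
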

\begin{proof}
    The assumptions on $f$ imply that we have a commuting square
    \begin{equation*}
        \xymatrix@=0.2in{
            \Perf(X) \ar[r]^-{\ind} \ar[d]_{f^*} & \Ind(\Perf_Z(X)) \ar[d]^{f^*} \\
            \Perf(X_V) \ar[r]^-{\ind} & \Ind(\Perf_{Z_V}(X_V)).
        }
    \end{equation*}
    By quotienting and taking idempotent, we obtain the commuting square of the lemma.
\end{proof}
The Tate and Calkin realization for coherent sheaves (respectively pseudo-coherent complexes) are related by the composition of the natural exact functors
$$\Tate(\Coh_Z(X)) \to^q \Calk(\Coh_Z(X)) \to^{\Psi} \Calk(\mathsf{DCoh}_Z(X)).$$

\begin{lemma}\label{lemma:calk_n_tate}
Let $X$ be Noetherian, and $Z$, $U$, and $W$ satisfy the conditions of Definition \ref{defi:tate_real}. We have a commuting square
\[
\xymatrix@=0.2in{
\Coh_{|W|\cap U}(U) \ar[r]^-{\tate} \ar[d]_{\Phi} & \Tate(\Coh_{|Z|}(X)) \ar[d]^{\Psi q[-1]} \\
\mathsf{DCoh}_{|W|\cap U}(U) \ar[r]^-{\calk} & \Calk(\mathsf{DCoh}_{|Z|}(X))
}
\]
of $\infty$-categories, where $\Phi\colon \Coh_{|W|\cap U}(U) \to \mathsf{DCoh}_{|W|\cap U}(U)$ denotes the canonical functor.
\end{lemma}

\begin{proof}
According to Definition \ref{defi:calkin_real} we have that, for every pseudo-coherent complex $\F$ on $X$ with set-theoretic support in $|W|$,
$$\calk(j^*\F) =\mathrm{fib}(\F \to j_*j^*\F).$$
Since $j$ is proper and affine by assumption, we have that for $\F \in \Coh(X)$ the expression $j_*j^*\F$ has vanishing higher cohomology groups. In particular, $\calk(j^*\F)$ can be represented by the admissible Ind-object $j_*j^*\F/\F[-1]$. By Remark \ref{rmk:tate_real}, this admissible Ind-object represents the Calkin object corresponding to the Tate object $\tate(\F)$. The general case, i.e. of a coherent sheaf on $U$ which does not extend to $X$, follows by passing to idempotent completions.

The discussion above gives rise to the top square in the commutative diagram below, where $\Phi\colon \Coh_{|W|\cap U}(U) \to \mathsf{DCoh}_{|W|\cap U}(U) $ denotes the canonical functor to the derived category, and $q$ is the exact functor of Theorem \ref{thm:index_boundary}:
\[
\xymatrix@=0.2in{
\Coh_{|W|\cap U}(U) \ar[r]^-{\tate} \ar[d]_{\id} & \Tate(\Coh_{|Z|}(X)) \ar[d]^q\\
\Coh_{|W|\cap U}(U) \ar[r]^{\phi} \ar[d]_{\Phi} & \Calk(\Coh_{|Z|}(X)) \ar[d]^{\Psi}\\
\mathsf{DCoh}_{|W|\cap U}(U) \ar[r]^-{\calk[1]} & \Calk(\mathsf{DCoh}_{|Z|}(X)),
}
\]
where $\phi$ is the functor obtained by sending $\F \in \Coh_{|W| \cap U}(U)$ to $j_*\F/\widetilde{F} \in \Calk(\mathsf{DCoh}_{|Z|}(X))$, where $\widetilde{\F}$ is a pseudo-coherent subsheaf of $j_*\F$, such that $j^*\widetilde{F} = \F$. The outer square yields the required commutative diagram.
\end{proof}

For $X$ a Noetherian scheme, and $\xi$ a saturated flag of closed subschemes we denote by $X^{[m]}$ the scheme obtained by applying Definition \ref{defi:(m)} of $X^{(m)}$ for $A = k$.
We now construct a sequence of Tate realization functors
$$\tate\colon \Coh_{Z_j^{(j)}}(X^{[j]}) \to \Tate(\Coh_{Z_{j-1}^{[j-1]}}(X^{[j-1]})).$$
Lemma \ref{lemma:affineW}(b) implies that the crucial affineness condition of Definition \ref{defi:tate_real} is satisfied in this now case for dimension reasons. Composition of these exact functors yields a well-defined exact functor
$$\tate^n\colon \Coh(X^{[n]}) \to \nTate(\Coh_{Z_0}(X)).$$

The proposition below can also be obtained from \cite{MR3510209}.

\begin{proposition}\label{prop:tate_adelic}
The functor $\pi_*\tate^n$ agrees with the ($n$-Tate object valued) Beilinson-Parshin ad\`eles $\underline{F}_{X,\xi}$.
\end{proposition}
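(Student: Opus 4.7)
The plan is to prove the proposition by induction on $n$, the length of the flag. The induction mirrors the recursive structure of the Beilinson adèles in Definition \ref{defi:adeles}: each application of the Tate realization $\tate$ factors as an $\ind$ step and a $\pro$ step, and these two operations match the pair ``complete at $Z$, then pass to the directed colimit over the restrictions'' appearing in clauses (b) and (c) of the adèle definition.

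For the base case $n=0$, the flag reduces to $X \supset Z_0$ with $Z_0$ a finite set of closed points. Here no iteration of $\tate$ takes place, and the claim amounts to identifying the global sections on $\Cb_0 X$ with the product of completed stalks $\prod_{x \in Z_0}\widehat{\F}_x$ for $\F$ coherent, which is exactly clause (a) of Definition \ref{defi:adeles}. For the inductive step, I decompose $\tate^n = \tate \circ \tate^{n-1}$, where the outer $\tate$ is the Tate realization associated with the outermost basic zigzag $\mathcal{Z}_n^{\text{loc}}$. By Definition \ref{defi:tate_real}, this outer $\tate$ factors through the $\pro$ functor---producing the inverse system $((i_r)_* i_r^*\F)_{r\geq 0}$ along the infinitesimal neighbourhoods of $Z_n$, which matches the $\varprojlim_r$ in clause (b) of Definition \ref{defi:adeles}---and the $\ind$ functor, which produces the Ind-structure corresponding to the directed colimit of clause (c). Applying the inductive hypothesis to $\pi_* \tate^{n-1}$ on each of the resulting slices, and using naturality of $\pi_*$ under the completion maps $\Cb_{Z_j}X \to X$, yields the required identification with $\Ab_X(\{\xi\},-)$.

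The main technical obstacle will be verifying that push-forward commutes with the Pro-limits and Ind-colimits at each layer in the precise way required by the inductive definition. Concretely, one must check that push-forward along $\Cb_{Z_j}X \to X$ intertwines scheme-theoretic completion with the $\pro$ operation on Tate objects (this is essentially a Mittag--Leffler/coherence argument, since the transition maps in the Pro-system are surjective and the schemes involved are Noetherian), and that Huber's reduced variant of the adèles used in Definition \ref{defi:adeles} is compatible with the \emph{saturated} flag convention that underlies the local zigzag. The affineness hypothesis of Definition \ref{defi:tate_real} needed at each step is provided by Lemma \ref{lemma:affineW}(b), applied successively to the slices $(\Rb \circ \Cb)_0^j Z_j \hookrightarrow \Cb_{j-1}(\Rb \circ \Cb)_0^{j-1} Z_j$, so the realization functors genuinely exist and the recursion goes through.
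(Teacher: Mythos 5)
Your strategy---induction on the flag length, matching the $\ind$/$\pro$ factorisation of the Tate realization against the recursive clause (b) and the colimit clause of Definition \ref{defi:adeles}, with the base case handled by clause (a) and the affineness checks by Lemma \ref{lemma:affineW}(b)---is exactly what the paper's one-line observation gestures at, so the overall plan is the right one. However, there is a concrete misidentification in the inductive step. In Definition \ref{defi:tate_real} the functor $\pro$ completes along $Z$, i.e.\ along the closed complement of the open immersion used at that step, \emph{not} along the support subscheme $W$. In the basic local zigzag $\mathcal{Z}_j^{\text{loc}}$ the removed closed subscheme is the image of $Z_{j-1}$ (the $(j-1)$-dimensional member of the flag), so the $\pro$ factor for $\mathcal{Z}_n^{\text{loc}}$ produces the Pro-system along the infinitesimal neighbourhoods of $Z_{n-1}$, while that for $\mathcal{Z}_1^{\text{loc}}$ produces the Pro-system along $Z_0$. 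Nothing completes along $Z_n$: since $Z_n = X$, the generic point $\eta_n$ has $\Oo_{X,\eta_n}/\mathfrak{m}_{\eta_n}^r$ stabilizing after $r=1$, so the corresponding $\varprojlim_r$ in the adèle recursion at $\eta_n$ is the \emph{constant} system. Your assertion that the outer $\pro$ produces the inverse system ``along the infinitesimal neighbourhoods of $Z_n$'' and thereby matches the $\varprojlim_r$ of clause (b) is therefore wrong as written.

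This is not just a typo; it points to an offset between the two recursions that the argument needs to resolve. Each $\tate$ in the composition $\tate^n$ does a pushforward ($\ind$, landing in Ind-coherent sheaves on the completed ambient scheme) \emph{and} a completion along $Z_{j-1}$ ($\pro$), starting already from the fully localized scheme $(\Rb\circ\Cb)^n X$. In the adèle recursion, by contrast, the peel-off at $\eta_n$ contributes only the generic localization (trivial $\varprojlim_r$), and the first genuine completion occurs at $\eta_{n-1}$, i.e.\ along $Z_{n-1}$. So the $\ind$/$\pro$ pair inside a single $\tate$ realization straddles two consecutive clause-(b) steps, and the naïve dictionary ``one Tate-realization layer $\leftrightarrow$ one clause-(b) step'' has to be rechunked before the inductive step is valid. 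Similarly, identifying $\ind$ with ``the directed colimit of clause (c)'' conflates two things: the colimit clause is a formal extension of $\Ab_X(T,-)$ from coherent to quasi-coherent sheaves (which is indeed invoked, because $j_*\F$ is only Ind-coherent), but the $\ind$ functor itself is the pushforward $j_*$, corresponding to the localization implicit in $j_{rx}^*$. Once the completion sites are corrected and the offset between the two recursions is spelled out explicitly, the inductive approach should go through and would provide a genuine fleshing-out of the paper's terse proof.
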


\begin{proof}
The functors $\ind$ and $\pro$ of Definition \ref{defi:tate_real} mirror localization and completion with respect to the scheme $X$. In particular, we see for $\F \in \Coh(W)$ that $\pi_*\tate^n(\F)= \underline{F}_{X,\xi}$.
\end{proof}

Composing $\nTate$ with pushforward $\pi\colon \Coh_{Z_0}(X) \to \Coh(\Spec k) = \Vect_f(k)$, we obtain an exact functor
$\Coh(X^{[n]}) \to \nTate(k).$

\begin{definition}\label{defi:tate_real_rel}
Let $X$ be a Noetherian $k$-scheme, and $\xi$ a saturated flag of closed subschemes. For every $k$-algebra $A$ we denote by $\Coh^{\flat}(X^{(n)})$ the full subcategory of $\Coh(X^{(n)})$, consisting of coherent sheaves which are pulled back along the canonical map $s\colon X^{(n)} \to X^{[n]}$. Denoting by
$$(-)_A\colon \nTate(k) \to \nTate(A)$$
the exact functor induced by $-\otimes_k A\colon \Vect_f(k) \to P_f(A),$ we have a unique $A$-linear functor
$$(\pi_*\circ \tate^n)_A\colon \Coh^{\flat}(X^{(n)}) \to \nTate(A),$$
such that the following diagram commutes:
\[
\xymatrix@C=0.5in{
\Coh(X^{[n]}) \ar[r]^-{\pi_*\tate^n} \ar[d]_{s^*} & \nTate(k) \ar[d]^{(-)_A} \\
\Coh^{\flat}(X^{(n)}) \ar[r]^-{(\pi_*\tate^n)_A} & \nTate(A)
}
\]
\end{definition}

\begin{proposition}\label{prop:CCboundary_step}
We denote by $\VB_f(W)$ the exact category of \emph{free vector bundles} on a scheme $W$. Let $X$ be a finite type, separated $k$-scheme of dimension $n$, and let $\xi$ be a saturated flag of closed subschemes. For every $k$-algebra $A$, the diagram
\[
\xymatrix{
\VB_f(X^{(n)}) \ar[r]^-{(\pi_* \tate^n)_A} \ar[d]_{s^*} & \nTate(A) \ar[d]^{q[-n]} \\
\Perf(X^{(n)}) \ar[r]^-{\pi_*\calk^n} \ar[r] & \Calk^n(\Perf(A))
}
\]
is commutative.
\end{proposition}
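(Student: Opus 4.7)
The plan is to establish commutativity by reducing to the case $A = k$ via base change, iterating the single-step comparison of Lemma \ref{lemma:calk_n_tate} across the $n$ basic zigzags of $\mathcal{Z}^{\mathrm{loc}}_{X,\xi}$, and finally incorporating the pushforward $\pi_*$. For the reduction, observe that any free vector bundle on $(\Rb\circ\Cb)^n X_A$ is simply $\Oo^r$, canonically pulled back from $(\Rb\circ\Cb)^n X$. The right-hand vertical composition $(\pi_*\tate^n)_A$ is, by Definition \ref{defi:tate_real_rel}, obtained from its $A=k$ incarnation by the base-change functor $-\widehat{\otimes}_k A$; the left-hand side is compatible with base change along the flat affine morphism $\Spec A \to \Spec k$ by Lemma \ref{lemma:calk_basechange}. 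Hence the general case follows from the case $A = k$ by $A$-linearity.

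The heart of the argument is the iteration of Lemma \ref{lemma:calk_n_tate}. At each step $j$ of the local zigzag, the inclusion of $(\Rb\circ\Cb)_0^{j-1}Z_j$ (zero-dimensional, by Lemma \ref{lemma:foshizzle}(a) applied to the saturated flag) into the one-dimensional scheme $\Cb_{j-1}(\Rb\circ\Cb)_0^{j-1}Z_j$ is affine by Lemma \ref{lemma:affineW}(b). Lemma \ref{lemma:calk_n_tate} therefore yields a commutative square comparing $\tate$ on $\Coh$ with $\calk$ on $\Coh^{der}$ via the shift $q[-1]$. Concatenating the resulting $n$ such squares produces a commutative square relating $\tate^n$ with $\calk^n$ via $q[-n]$, thereby matching the vertical comparison asserted by the proposition at the pre-pushforward stage.

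Finally, I would apply $\pi_*$. Since $Z_0$ is finite over $k$, the restriction of $\pi$ to sheaves supported on $Z_{0,A}$ is a finite morphism, so $\pi_*$ is exact on $\Coh_{Z_{0,A}}(X_A)$ with image in $\Mod(A)$; evaluated on $\Oo^r$ this produces $A_{X,\xi}^r$, compatible with Proposition \ref{prop:tate_adelic} and Definition \ref{defi:AXxi}. By Lemma \ref{lemma:push_perfect}, the same pushforward sends $\Perf_{Z_{0,A}}(X_A)$ into $\Perf(A)$, matching $\pi_*\calk^n$ on the left column. Naturality of $q$ under the exact pushforward then propagates the iterated realization comparison to the asserted global square. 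I expect the main obstacle to be the careful bookkeeping of the shifts and the verification of the affineness hypotheses at each of the $n$ iterations of Lemma \ref{lemma:calk_n_tate}; both are controlled by the saturated-flag hypothesis via Lemma \ref{lemma:foshizzle}(a), which is precisely what ensures the realizations land in $\nTate$ and that the single-step shift $q[-1]$ accumulates coherently to $q[-n]$.
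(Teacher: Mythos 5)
Your proposal is correct and follows essentially the same route as the paper: the case $A=k$ is handled by iterating the single-step comparison of Lemma \ref{lemma:calk_n_tate}, and the general case is deduced from base change invariance of the Calkin realization (Lemma \ref{lemma:calk_basechange}) together with the definition of $(\pi_*\tate^n)_A$ by base change (Definition \ref{defi:tate_real_rel}). Your extra bookkeeping of the affineness hypotheses via Lemmas \ref{lemma:foshizzle}(a) and \ref{lemma:affineW}(b), and of the pushforward via Lemma \ref{lemma:push_perfect}, simply makes explicit what the paper's terse proof leaves implicit.
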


\begin{proof}
For $A = k$ this follows from applying the comparison of Lemma \ref{lemma:calk_n_tate} iteratively. The general case follows from the base change invariance of the Calkin realization (Lemma \ref{lemma:calk_basechange}), and Definition \ref{defi:tate_real_rel} of the functor $(\pi_* \circ \tate^n)_A$ by base change.
\end{proof}

We are now ready to prove that the spectral Contou-Carr\`ere symbol $\sigma_{X,\xi}^A$ can be represented as the composition $\pi_* \circ \partial_{\mathcal{Z}^{\text{loc}}_{X_A,\xi_A}}$.

\begin{proof}[Proof of Theorem \ref{thm:CCboundary}]
Proposition \ref{prop:CCboundary_step} established a compatibility between the Tate and Calkin realization:
$\pi_*\calk^n \simeq q(\pi_* \tate^n)_A [-n].$
Applying the non-connective algebraic $K$-theory functor $\Kb_-$ to this equivalence, we obtain two equivalent maps
\begin{equation}\label{eqn:calkvstate}
\Kb_{\pi_*\calk^n} \simeq \Kb_{q(\pi_* \tate^n)_A [-n]} \colon \Kb_{X^{(n)}} \to \Kb_{\Calk^n(\Perf(A))} \simeq \Sigma^n\Kb_{A}.
\end{equation}
Here, we made use of the fact that non-connective algebraic $K$-theory is cofinally invariant, i.e. cannot distinguish between an exact category and its idempotent completion. In particular,
$$\Kb_{\VB_f(X^{(n)})} \cong \Kb_{\VB(X^{(n)})} \cong \Kb_{X^{(n)}}.$$
By Proposition \ref{prop:tate_adelic}, and by the definition of $(\pi_*\tate^n)_A$ by base change (Definition \ref{defi:tate_real_rel} ), we see that the second map of \eqref{eqn:calkvstate} agrees with
$\Kb_{X^{(n)}} \to \Kb_{\nTate(A)} \to^{\Index^n} \Sigma^n\Kb_{A}.$
Hence, this map agrees with the spectral Contou-Carr\`ere symbol, by Definition \ref{defi:spectral_CC}. To conclude the proof we have to compare the first map with the $n$-fold composition of the boundary map
$\Kb_{X^{(n)}} \to^{\partial^n} \Sigma^n \Kb_{X_A,(Z_0)_A} \to^{\pi_*} \Sigma^n\Kb_A.$ Definition \ref{defi:calkin_real} implies that for every triple $(X,U,Z)$ we have a commutative cube of stable $\infty$-categories
\[
        \xymatrix@=0.1in{
          && 0 \ar[rrr] \ar'[d][dd]
              & & & \Perf(U) \ar[dd]^{\calk}        \\
         \Perf_Z(X) \ar[urr]\ar[rrr]\ar@{=}[dd]
              & & & \Perf(X) \ar[urr]\ar[dd] \\
          && 0 \ar'[r][rrr]
              & & & \Calk(\Perf_Z(X))                \\
          \Perf_Z(X) \ar[rrr]\ar[urr]
              & & & \Ind(\Perf_Z(X)), \ar[urr]        }
\]
where the top square comes from the localization sequence of the closed embedding, and the lower square corresponds to the short exact sequence of stable $\infty$-categories $$\Perf_Z(X) \to \Ind(\Perf_Z(X)) \to \Calk(\Perf_Z(X)).$$
Since the top and bottom face are localization sequences, applying $\Kb_-$ yields a commutative cube with top and bottom face being bi-cartesian. In particular, we obtain a commutative triangle relating the boundary maps of the bottom and top face:
\[
\xymatrix{
\Kb_{\Perf(U)} \ar[r]^{\partial} \ar[d]_{\calk} & \Sigma\Kb_{X,Z} \\
\Kb_{\Calk(\Perf_Z(X))}. \ar[ru]_{\partial} &
}
\]
Applying this comparison $n$ times, we see that $\Kb_{\pi_*\calk^n}$ is equivalent to $\pi_*\partial_n \circ \cdots \circ \partial_1$.
\end{proof}

\section{Reciprocity}\label{reciprocity}

Let $X$ be a proper integral curve over a field $k$. We write $X_0$ for its set of closed points. For every commutative $k$-algebra $A$, $x\in X_0$, and a pair of units in the ring of \emph{$A$-valued rational functions}
$$f,g \in A(X)^{\times} = (k(X) \otimes_k A)^{\times}$$
the Contou-Carr\`ere symbol gives an element $(f,g)_x$ of $A^{\times}$.

\begin{theorem}[Weil, Anderson--Pablos Romo, Beilinson--Bloch--Esnault]
The product below is well-defined and satisfies
$$\prod_{x \in X_0} (f,g)_x = 1.$$
\end{theorem}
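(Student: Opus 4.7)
The plan is to deduce the one-dimensional reciprocity law from the $K$-theoretic description of the Contou-Carr\`ere symbol given by Theorem \ref{thm:CCboundary}, combined with the Thomason--Trobaugh localisation sequence applied to the proper curve $X_A := X \times_k \Spec A$.

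Since $f, g \in (k(X) \otimes_k A)^{\times}$, I can find a cofinite open $U \subset X$ such that both $f$ and $g$ extend to units in $\Oo(U_A)^{\times}$, where $U_A := U \times_k \Spec A$; let $Z := X \setminus U$, a finite set of closed points. At every $x \notin Z$, direct inspection of the explicit formula \eqref{eqn:big_formula} shows $(f,g)_x = 1$: the valuations $\nu(f), \nu(g)$ vanish, the coefficients $a_i, b_i$ of negative index vanish, and the two infinite products collapse to $1$. This secures the well-definedness of the product over all closed points.

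Next, form the Steinberg symbol $\{f,g\} \in K_2(U_A)$ and apply the boundary map of the Thomason--Trobaugh localisation sequence
$$K_2(U_A) \xrightarrow{\partial} K_1(\Perf_{Z_A}(X_A)) \xrightarrow{j} K_1(X_A).$$
Because $Z$ is a finite disjoint union of closed points, Zariski excision provides a natural splitting $\Perf_{Z_A}(X_A) \simeq \bigoplus_{x \in Z} \Perf_{\{x\}_A}(X_A)$, under which $\partial$ decomposes into local boundaries $\partial_x$. By Theorem \ref{thm:CCboundary} specialised to $n = 1$ (combined with Proposition \ref{prop:CC_commutator} matching the higher-commutator definition with the classical Contou-Carr\`ere symbol), the $x$-th component satisfies $\det\bigl(\pi_{x,*}\partial_x\{f,g\}\bigr) = (f,g)_x^{-1}$, where $\pi_{x,*}$ denotes the pushforward to $\Spec A$ from the local situation at $x$. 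Summing over $x$ and using that the global proper pushforward $\pi_*\colon K_1(\Perf_{Z_A}(X_A)) \to K_1(A)$ is the sum of the local ones, we obtain
$$\prod_{x \in X_0}(f,g)_x^{-1} = \det\bigl(\pi_* \partial\{f,g\}\bigr),$$
where $\pi \colon X_A \to \Spec A$ is the structural map.

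Reciprocity now reduces to the single vanishing $\pi_* \circ \partial = 0$. Since $X$ is proper over $k$, the pushforward $\pi_* \colon \Perf(X_A) \to \Perf(A)$ is defined on the ambient category, and its restriction to complexes supported on $Z_A$ is naturally equivalent to $\pi_* \circ j$. By exactness of the localisation sequence above, $j \circ \partial = 0$, hence $\pi_* \partial\{f,g\} = 0$ and $\det(0) = 1 \in A^{\times}$, which gives $\prod_x (f,g)_x = 1$. The main technical ingredient to verify is the compatibility of the proper pushforward with the support filtration, which is standard for Thomason--Trobaugh $K$-theory; the conceptual work was already done in Theorem \ref{thm:CCboundary}. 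This argument is the one-dimensional prototype of the strategy used in Section \ref{reciprocity} to prove the general higher-dimensional Theorem \ref{thm:second}.
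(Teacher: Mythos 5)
Your proof is correct and takes the same essential route as the paper's. The paper packages the mechanism --- that $j\circ\partial = 0$ in the Thomason--Trobaugh localisation sequence, together with the fact that for $X$ proper the pushforward on $\Perf_{Z_A}(X_A)$ factors through $\Kb_{X_A}$ --- as ``abstract Weil reciprocity'' (Theorem~\ref{thm:abstract_Weil}) and its specialisation Corollary~\ref{cor:abstract_Kato}(a), and then instantiates it with $\C = \Perf(X_A)$, $\Sf = \Perf_{Z_A}(X_A)$, $\D = \Perf(A)$, taking a filtered colimit over $Z$ to reach $A(X)$; your version fixes $Z$ as the divisor of poles of $f,g$ and disposes of the remaining places by direct inspection of \eqref{eqn:big_formula}, which is interchangeable. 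One small point to keep in mind: when you invoke Theorem~\ref{thm:CCboundary} to identify the $x$-component of the \emph{scheme-theoretic} boundary $\partial_x\colon K_2(U_A)\to K_1(\Perf_{\{x\}_A}(X_A))$ with $(f,g)_x^{-1}$, that theorem is a priori about the boundary of the \emph{local zigzag} $\Omega\Kb_{A_{X,\xi_x}}\to\Kb_A$. Equating the two requires the naturality of boundary maps (Lemma~\ref{lemma:zigzag_natural}) together with the derived-completion comparison of Section~5 (the paper does this via Lemma~\ref{lemma:CCefimov}); this is exactly where the work beyond plain Thomason--Trobaugh sits when $A$ is non-Noetherian, so it deserves an explicit sentence rather than being elided into the citation.
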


This reciprocity law has been proven by Weil for $A = k$, it was generalized to the case of artinian rings $A$ by Anderson--Pablos Romo \cite{MR2036223}, and to general $A$ by Beilinson--Bloch--Esnault \cite[\S 3.4]{MR1988970}. Recently, P\'al has shown in \cite{MR2722779} that, for artinian rings, the relative version of Weil reciprocity follows from the absolute case $(A = k)$ after a change of fields.

This section is concerned with an extension of this result to varieties of arbitrary dimension (and arbitrary rings $A$). The absolute case ($A = k$) is due to Kato \cite{MR862638} (however, the case of surfaces was pioneered by Parshin). Recent work of Osipov--Zhu \cite{Osipov:2013fk} established a Contou-Carr\`ere reciprocity law for surfaces and artinian rings.

\medskip
Fix an integer $0 \leq i \leq n$. As before, $A$ denotes a $k$-algebra over a field $k$. The main player is an $n$-dimensional, integral, separated $k$-scheme of finite type $X$, \emph{together} with an \emph{almost saturated flag}
\begin{equation}\label{eqn:almost_saturated}
\zeta=(X = Z_n \supset \cdots Z_{i+1} \supset Z_{i-1} \supset \cdots Z_{0}),
\end{equation}
of closed integral subvarieties, satisfying $\dim Z_j = j$. If $i = 0$, we assume that $Z_1$ is proper over $k$.
\medskip

For every closed equiheighted $i$-dimensional subset $Z$, satisfying $Z_{i+1} \supset Z \supset Z_{i-1}$ we obtain a \emph{saturated flag} $\xi_Z$.
Note that we denote saturated flags by the letter $\xi$ for the sake of visual distinctness.

In order to formulate the reciprocity law, we need to construct an analogue of the ring of $A$-valued rational functions $A(X)$ on a curve $X$. This ring $A_{\zeta}(X)$ should be naturally associated to the data $(X,\zeta)$ and the $k$-algebra $A$. Further, for each $Z$ as above, we require a specialization homomorphism
$$A_{\zeta}(X) \to^{i_{\zeta}} A_{X,\xi_Z}.$$
The latter is required to make sense of the factors of the product
$$\prod_{Z_{i+1}\supset Z \supset Z_{i-1}}(f_0,\dots,f_n)_{\xi_Z}.$$


\begin{definition}\label{defi:F_xi_A} We define the following:
\begin{enumerate}
\item[(a)]Let $X$ be a separated $n$-dimensional $k$-scheme of finite type, $A$ a $k$-algebra, and $\zeta$ an almost complete flag in $X$.
For each $Z_{i+1}\supset Z \supset Z_{i-1}$ with $Z$ of pure dimension $i$ and not necessarily irreducible, we denote the ring of regular functions on the scheme $(\Comp \circ \Loc)^{n-i-1} \circ \Loc \circ (\Comp \circ \Loc)^i(X_A,(\xi_Z)_A)$ by $A^{}_{\zeta,Z}(X)$. We define the ring
$A_{\zeta}^{}(X)$ to be the direct limit $$A^{}_{\zeta}(X) = \varinjlim_{Z_{i-1} \subset Z \subset Z_{i+1}}A^{}_{\zeta,Z}(X) ,$$
where $Z$ is a closed subset of pure dimension $i$ (not necessarily irreducible).
\item[(b)] For every $Z$ as in (a), we denote the natural ring homomorphism $A_{\zeta}^{}(X) \to A_{X,\xi_Z}$ by $i_{\zeta}$.
\end{enumerate}
\end{definition}

In the definition above we can apply the operations $\Loc$ and $\Comp$ because we may replace the scheme by a suitable affine open neighbourhood.

After having introduced this colimit, we observe that the algebraic $K$-theory is manageable for formal reasons. This will be used in the proof of our main result.

\begin{rmk}\label{rmk:colimit}
Since non-connective $K$-theory of rings commutes with filtered colimits (Theorem 7.2 of \cite{MR1106918}) one has
$$\Kb_{A^{}_{X,\zeta}}  = \varinjlim_{Z_{i-1} \subset Z \subset Z_{i+1}}\Kb_{(\Comp \circ \Loc)^{n-i-1} \circ \Loc \circ (\Comp \circ \Loc)^i(X_A,(\xi_Z)_A)}.$$
\end{rmk}

We are now ready to state the main result of this section, in a classical formulation:

\begin{theorem}[Reciprocity for Contou-Carr\`ere symbols]\label{thm:classicalCC}
Let $X$ be an integral separated $n$-dimensional $k$-scheme of finite type, and let $A$ be a commutative $k$-algebra. Let $\zeta$ be an almost saturated flag as in \eqref{eqn:almost_saturated}. For every  $(n+1)$-tuple $f_0,\dots,f_n \in A_{\zeta}^{}(X)^\times$ we have that the product below is well-defined and satisfies the identity
$$\prod_{Z_{i+1} \supset Z \supset Z_{i-1}}(f_0,\dots,f_n)_{\xi_Z} = 1,$$
where $Z$ is integral and of dimension $i$.
\end{theorem}

We will deduce this result in Subsection \ref{sub:theproof} from an \emph{abstract reciprocity law} for compositions of boundary maps (see Corollary \ref{cor:abstract_Kato}). The reciprocity relation will be generalized to the existence of a null-homotopy for a certain map of spectra. We refer to such a construction as \emph{spectrification} (following Beilinson).

\subsection{Abstract Reciprocity Laws}

\subsubsection{Notation}

In Appendix \ref{sub:derivedcompletion} we explain a mild generalization of a construction due to Efimov, which allows one to complete a stable $\infty$-category $\C$ at a full subcategory $\Sf$. The resulting category is denoted by $\C_{\widehat{\Sf}}$. This is a categorical analogue of completion in commutative algebra. We refer the reader to the appendix of Efimov's \cite{Efimov:2010fk} for more details.

\begin{definition}\label{defi:iteration}
Let $\C$ be a stable $\infty$-category as in Paragraph \ref{subsub:completion}.
\begin{itemize}
\item[(a)] A chain of localizing subcategories $\mathbf{S}_0 \subset \mathbf{S}_1 \subset \dots$, will be referred to as a \emph{flag} in $\C$.
\item[(b)] We denote $\Sf_i$ by $\C_{[i]}$.
\item[(c)] We denote $\C_{\widehat{\Sf_i}}$ by $\C_{\widehat{[i]}}$.
\item[(d)] We write $\C_{(i)} = (\C/\Sf_i)^{\ic}$.
\item[(e)] We write $\C_{\widehat{(i)}} = (\C_{\widehat{[\Sf_i]}}/\Sf_i)^{\ic}$.
\item[(f)] Given a flag on $\C$ as above, we define the \emph{iterated removal-completion operation} by
$$\C_{\widehat{(0,n)}} = ((\C_{\widehat{(0,n-1)}})_{\widehat{\Sf_n}} /(\Sf_n)_{\widehat{(0,n-1)}})^{\ic},$$ with $\C_{\emptyset} = \C$.
\end{itemize}
\end{definition}

Let $X$ be a scheme. Given a flag of closed subschemes in $X$,
$$\xi = (X = Z_n \supset Z_{n-1} \supset \dots \supset Z_0),$$
we obtain a flag of localizing subcategories $\Sf_0,\dots,\Sf_{n-1}$ of $\Perf(X)$ by defining $\Sf_i$ to be $\Perf_{Z_i}(X)$. The following example is a special case of Proposition \ref{prop:efimov} in the appendix.

\begin{example}\label{ex:affinespacestd}
Let $X$ be affine $n$-space $\mathbb{A}^n = \Spec k[t_1,\dots, t_n]$, and $\xi$ the flag given by $Z_i= \Spec k[t_1,\dots, t_i]$. We then have
$\C_{\widehat{(0,n)}} \cong \Perf(\Spec k((t_1))\dots((t_n))).$
\end{example}

\subsubsection{Reciprocity Laws}

In the following we denote by $\C$ a stable $\infty$-category, and consider a chain $\Sf_0,\Sf_1,\dots,\Sf_n$ of localizing subcategories (as considered above). We will be concerned with the composition of boundary maps, connecting the non-connective $K$-theory spectra of various stable $\infty$-categories constructed from $\C$ with the help of the localizing subcategories.

Let $\C$ be a stable $\infty$-category together with a localizing subcategory $\Sf$. With respect to the terminology introduced in Definition \ref{defi:pre-iteration} we have short exact sequences of stable $\infty$-categories
$$\Sf \hookrightarrow \C \twoheadrightarrow \C_{(\Sf)}\text{ and } \Sf \hookrightarrow \C_{\widehat{\Sf}} \to \C_{\widehat{(\Sf)}}$$
and canonical functors
\begin{equation}\label{FG}\C \to^F \C_{\widehat{\Sf}} \text{ and }\C_{(\Sf)} \to^G \C_{\widehat{(\Sf)}}.\end{equation}
Furthermore, these short exact sequences and functors belong to a commutative diagram:
\begin{equation}\label{completediagram}
\xymatrix{
\Sf \ar@{^(->}[r] \ar[d]_{\id_{\Sf}} & \C \ar[d]^F \ar@{->>}[r] & \C_{(\Sf)} \ar[d]^G \\
\Sf \ar@{^(->}[r] & \C_{\widehat{\Sf}} \ar@{->>}[r] & \C_{\widehat{(\Sf)}}
}
\end{equation}
On the level of algebraic $K$-theory, the localization sequences yields a boundary map
$$ \Omega\Kb_{\widehat{(\Sf)}} \to^{\widehat{\partial}} \Kb_{\Sf}.$$

\begin{theorem}[Abstract Weil reciprocity]\label{thm:abstract_Weil}
Let $\C$ be a stable $\infty$-category together with a localizing subcategory $\Sf$. We assume the existence of an exact functor $\C \to^c \D$, where $\D$ denotes as well a stable $\infty$-category. We denote the inclusion $\Sf \hookrightarrow \C$ by $a$, and the restriction $c|_{\Sf}$ by $b$:
\[
\xymatrix{
\Sf \ar[r]^a \ar[rd]_b & \C \ar[d]^c \\
& \D
}
\]
Under these assumptions the map $\Omega\Kb_{\C_{(\Sf)}} \to \Kb_{\D}$ defined as the composition $b \circ \partial \circ G$
\[
\xymatrix{
\Omega\Kb_{\C_{(\Sf)}} \ar[r]^-{b\partial G} \ar[d]_G & \Kb_{\D} \\
\Omega\Kb_{\C_{\widehat{(\Sf)}}} \ar[r]^-{\partial} & \Kb_{\Sf} \ar[u]_b
}
\]
is homotopic to the zero map.
\end{theorem}

\begin{proof}
We have a commutative diagram:
\[
\xymatrix{
\Omega\Kb_{\C_{(\Sf)}} \ar[r]^{\partial} \ar[d]_G & \Kb_{\Sf} \\
\Omega\Kb_{\C_{\widehat{(\Sf)}}}  \ar[ru]_{\partial} &
}
\]
Commutativity of the diagram above follows from the naturality of boundary maps in algebraic $K$-theory (applied to \eqref{completediagram}). This implies
$$b \circ \partial \circ G \simeq c\circ a \circ \partial\colon \Omega \Kb_{(\Sf)} \to \Kb_{\D}.$$

We may therefore focus on establishing the null-homotopy of the map $a \circ \partial$. We have a commuting diagram of spectra, with the square being bi-cartesian:
\[
\xymatrix@=0.2in{
& \Omega\Kb_{\C_{(\Sf)}} \ar[ldd]_-{b\partial} \ar[r] \ar[d]^{\partial} & 0 \ar[d]  \\
&  \Kb_{\Sf} \ar[r]^a \ar[ld]_-{\;\; b} & \Kb_{\C} \ar[lld]_c  \\
\Kb_{\D} & &
}
\]
Commutativity of the square implies $a \circ \partial \simeq 0$.
\end{proof}

\begin{example}[Weil reciprocity]
Let $X$ be a proper, integral curve over a field $k$, we set $\C = \Perf(X)$, and for every reduced $0$-dimensional closed subscheme $Z$ (not assumed to be irreducible) we let $\Sf$ be $\Perf_{|Z|}(X)$. We then have $\C_{(\Sf)} \cong \Perf(X \setminus Z)$. Using the (derived) pushforward functor to the base field
$\pi_*\colon \Perf(X) \to \Perf(k)$, Theorem \ref{thm:abstract_Weil} implies now that the canonical map
$$\pi_*\partial\colon\Omega\Kb_{X \setminus Z} \to \Kb_{k}$$
is homotopic to zero. The field of rational functions $k(X)$ arises as the direct limit
$k(X) \cong \colim_Z \Oo_{X \setminus Z},$
in particular we have $\Kb_{k(X)} \cong \colim_Z \Kb_{X \setminus Z}$, by virtue of Theorem 7.2 in \cite{MR1106918}. Since we have a functor from the direct limit of $\infty$-categories $\Perf(X)_{\widehat{(\Sf)}}$ to $\Perf(\Ab_X)$, by virtue of Theorem \ref{thm:general_efimov}(a), we obtain the commutative diagram in the stable $\infty$-category of spectra on the left:
\[
\xymatrix{
\Omega\Kb_{k(X)} \ar[r]^-{\pi_*\partial \simeq 0} \ar[d]_{- \otimes_F \Ab_X} & \Kb_{k} \\
\Omega\Kb_{\Ab_X} \ar[ur] &
}
\qquad \qquad \qquad
\xymatrix{
K_2(k(X)) \ar[r]^{\pi_*\partial = 1} \ar[d]_{- \otimes_F \Ab_X} & K_1(k) \\
K_2(\Ab_X). \ar[ru]
}
\]
Passing to homotopy groups, we obtain the commutative diagram of abelian groups on the right. Thus, we see that $\prod_{x \in X_0} \pi_*\partial\{f,g\} =\prod_{x \in X_0} N_{\kappa(x)/k}(f,g)_x = 1$, for all pairs of invertible rational functions on $X$.
\end{example}

Similarly one could use this result to prove reciprocity for Contou-Carr\`ere symbols, relative to any $k$-algebra $A$. We will give more details at the end of this section, when discussing the proof of reciprocity for higher-dimensional varieties.

\begin{theorem}[Abstract Parshin reciprocity]\label{thm:abstract_Parshin}
We denote by $\C$ a stable $\infty$-category, and by $\Sf_0 \subset \Sf_1 \subset \C$ a length $2$ chain of localizing subcategories. The construction of \eqref{FG} applied to $(\Sf_1)_{\widehat{(0)}} \subset \C_{\widehat{(0)}}$ yields a functor $G\colon \C_{\widehat{(0)}(1)} \to \C_{\widehat{(0,1)}}$ which belongs to a commutative diagram
\[
\xymatrix@=0.3in{
\Omega^2\Kb_{\C_{\widehat{(0)}(1)}} \ar[r]^{\partial} \ar[d]_G & \Omega\Kb_{\C_{\widehat{(0)}[1]}} \ar[r]^{\partial} \ar[d]^{\id} & \Kb_{\C_{[0]}} \\
\Omega^2\Kb_{\C_{\widehat{(0,1)}}} \ar[r]^{\partial} & \Omega\Kb_{\C_{\widehat{(0)}[1]}}, \ar[ru]_{\partial} &
}
\]
such that the composition of the top row is equivalent to the zero map.
\end{theorem}

\begin{proof}
As in the proof of Abstract Weil reciprocity, the existence of the commutative diagram follows directly from the naturality of boundary maps. We therefore turn to proving the existence of a null-homotopy for the composition of the top row. Similar to the proof of Theorem \ref{thm:abstract_Weil} we show that this composition factors through the juxtaposition of two subsequent maps in an exact sequence of spectra (thus is homotopic to $0$). This is achieved by the commuting diagram below on the left:
\[
\xymatrix@=0.24in{
& \Omega^2\Kb_{\C_{\widehat{(0)}(1)}} \ar[ldd]_{\partial\partial} \ar[r] \ar[d]^{\partial} & 0 \ar[d]  \\
& \Omega \Kb_{\C_{\widehat{(0)}[1]}} \ar[r]^a \ar[ld]_{\partial} &  \Omega \Kb_{\C_{\widehat{(0)}}}, \ar@{-->}[lld]^{\partial}  \\
\Kb_{\C_{[0]}} & &
}
\qquad
\xymatrix@=0.1in{
          && 0 \ar[rrr] \ar'[d][dd]
              & & & \Kb_{\C_{\widehat{[0]}[1]}} \ar[dd]_{\widehat{a}}       \\
         \Omega\Kb_{\C_{\widehat{(0)}[1]}} \ar[urr]\ar[rrr]\ar[dd]_a
              & & & \Kb_{\C_{[0]}} \ar[urr]\ar[dd]^(.3){\id} \\
          && 0 \ar'[r][rrr]
              & & &\Kb_{\C_{\widehat{[0]}}}               \\
          \Omega\Kb_{\C_{\widehat{(0)}}} \ar@{-->}[rrr]\ar[urr]
              & & & \Kb_{\C_{[0]}}. \ar[urr]
}
\]
provided we can establish the existence of the dashed map. To explain the diagram, note that almost all of the maps appearing in the commutative diagram above are boundary sequences for localization sequences in algebraic $K$-theory, the exception being $a\colon \Omega \Kb_{\widehat{(0)}[1]} \to \Omega \Kb_{\C_{\widehat{(0)}}}$ which is induced by the inclusion of the localizing subcategory
$$a \colon \C_{\widehat{(0)}[1]} \hookrightarrow \C_{[0]}.$$

A suitable candidate for the dashed map is given by the $K$-theory boundary morphism of the exact sequence of stable $\infty$-categories
$\C_{[0]} \hookrightarrow \C_{\widehat{[0]}} \twoheadrightarrow \C_{\widehat{(0)}}.$
Naturality of boundary maps implies the existence of a commutative diagram with exact rows as depicted above on the right. Most of the arrows in the cubical diagram are not labelled. The respective maps are well-defined by the fact that the rows are localization sequences in $K$-theory. The morphism $\widehat{a}$ is induced by the inclusion of the localizing subcategory $\C_{\widehat{[0]}[1]} = (\Sf_1)_{\widehat{\Sf}_0} \hookrightarrow \C_{\widehat{\Sf}_0} = \C_{\widehat{[0]}}$ (see also Definition \ref{defi:pre-iteration}).

The front square of the commuting cube amounts to the existence of the commuting triangle containing the dashed map above. This concludes the proof.
\end{proof}

Let us explain how this result implies Parshin's reciprocity statement for surfaces.

\begin{example}[Parshin reciprocity]
Let $Y$ be an integral separated excellent surface. We denote by $\C$ the stable $\infty$-category $\Perf(Y)$. For a fixed closed point $x \in Y$, we obtain a localizing subcategory $\Sf_0 = \Perf_{\{x\}}(Y)$. Moreover, for every integral curve $C$, with $x \in C$, we have a localizing subcategory $\Sf_1 = \Perf_{|C|}(Y)$. Theorem \ref{thm:general_efimov}(a) implies that $\C_{\widehat{(0,1)}} \cong \Perf(\Ab_{Y,C,x})$, and a direct limit of the $\infty$-categories $\C_{\widehat{(0)}(1)}$ yields $\Perf(\Frac(\widehat{\Oo_{Y,x}}))$. Hence, by Theorem \ref{thm:abstract_Parshin}, we have a commutative diagram of $K$-theory groups
\[
\xymatrix{
K_3(\Frac(\widehat{\Oo_{Y,x}})) \ar[r] \ar[d] & K_1(\Oo_{Y,x}/\mathfrak{m}_{Y,x}) \\
K_3(\Ab_{Y,?,x}), \ar[ru]
}
\]
in which the top map is trivial. Here $\Ab_{Y,?,x}$ denotes the ring of ad\`eles for chains, $Y \supset C \supset \{x\}$, where $C$ can be an arbitrary irreducible curve containing $x$. In particular, we see that for every triple $f_0,f_1,f_2 \in \Frac(\widehat{\Oo_{Y,x}})^{\times}$ we have the identity
$$\prod_{C \ni x}(f_0,f_1,f_2)_{x\in C} = 1,$$
where the product is indexed by integral closed curves containing $x$.
\end{example}

Combining Theorems \ref{thm:abstract_Weil} and \ref{thm:abstract_Parshin}, we obtain an abstract analogue of Kato reciprocity. In the next subsection we will use this result to deduce a reciprocity law for Contou-Carr\`ere symbols.

\begin{corollary}[Abstract Kato reciprocity]\label{cor:abstract_Kato}
Let $\C$ be a stable $\infty$-category. We fix positive integers $i$ and $n$, and assume that we have a chain of localizing subcategories $\Sf_j \subset \C$, indexed by $0 \leq j \leq n-1$.
\begin{itemize}
\item[(a)] If $i = 0$, suppose that we have a commutative diagram
\[
\xymatrix{
\Sf_0 \ar[r]^a \ar[rd]_b & \Sf_1 \ar[d]^c \\
& \D,
}
\]
where $\D$ denotes a stable $\infty$-category. Then, the morphism $\Omega^n\Kb_{\C_{(0)\widehat{(1,n-1)}}} \to \Kb_{\D}$ defined as the following composition
$$\Omega^n\Kb_{\C_{(0)\widehat{(1,n-1)}}} \to^{G_{\widehat{(1,n-1)}}} \Omega^n\Kb_{\C_{\widehat{(0,n-1)}}} \to^{\partial^n} \Kb_{\C_{[0]}} \to^b \Kb_{\D}$$
is null-homotopic. Here, $G\colon \C_{(0)} \to \C_{\widehat{(0)}}$ denotes the functor of \eqref{FG} applied to $\Sf_0 \subset \C$, and
$$G_{\widehat{(1,n-1)}}\colon \C_{(0)\widehat{(1,n-1)}} \to \C_{\widehat{(0,n-1)}}$$
denotes the induced functor, obtained by applying the functorial construction $(-)_{\widehat{(1,n-1)}}$ to $G$.

\item[(b)] If $i \neq 0$, then the following composition
$$\Omega^n\Kb_{\C_{\widehat{(0,i-1)}(i)\widehat{(i+1,n-1)}}} \to^{G_{\widehat{(i+1,n-1)}}} \Omega^n\Kb_{\C_{\widehat{(0,n-1)}}} \to^{\partial^n} \Kb_{\C_{[0]}}$$
is null-homotopic. Here, $G\colon \C_{\widehat{(0,i-1)}(i)} \to \C_{\widehat{(0,i)}}$ denotes the functor of \eqref{FG} applied to the localizing subcategory $(\Sf_i)_{\widehat{(0,i-1)}} \subset \C_{\widehat{(0,i-1)}}$, and
$$G_{\widehat{(i+1,n-1)}}\colon \C_{\widehat{(0,i-1)}(i)\widehat{(i+1,n-1)}} \to \C_{\widehat{(0,n-1)}}$$
denotes the induced functor, obtained by applying the functorial construction $(-)_{\widehat{(i+1,n-1)}}$ to $G$.
\end{itemize}
\end{corollary}

\begin{proof}
The first assertion follows directly from Theorem \ref{thm:abstract_Weil}, when setting $\C = \Sf_1$, and $\Sf = \Sf_0$.

We will now turn to the proof of the second assertion.
For $j \leq i-1$ we denote by
\begin{equation}\label{eqn:local_boundary2}
\partial_j\colon \Omega^{j+1}\Kb_{\C_{\widehat{(0,j)}[j+1]}} \to \Omega^j\Kb_{\C_{\widehat{(0,j-1)}[j]}}
\end{equation}
the boundary morphism in $K$-theory, associated to the short exact sequence
\begin{equation}\label{eqn:basic_local_zigzag}
\C_{\widehat{(0,j-1)}[j]} \hookrightarrow \C_{\widehat{(0,j-1)}\widehat{[j]}[j+1]} \twoheadrightarrow \C_{\widehat{(0,j)}[j+1]}
\end{equation}
of stable $\infty$-categories. Analogously, we have the boundary maps
\begin{equation}\label{eqn:local_boundary3}
\partial_i\colon \Omega^{i+1}\Kb_{\C_{\widehat{(0,i-1)}(i)[i+1]}} \to \Omega^i\Kb_{\C_{\widehat{(0,i-1)}[i]}},
\end{equation}
and for $j \geq i+1$
\begin{equation}\label{eqn:local_boundary4}
\partial_j\colon \Omega^{j+1}\Kb_{\C_{\widehat{(0,i-1)}(i)\widehat{(i+1,j)}[j+1]}} \to \Omega^j\Kb_{\C_{\widehat{(0,j-1)}(i)\widehat{(i+1,j-1)}[j]}}.
\end{equation}
We want to show that the composition of these boundary maps satisfies
$\partial_0 \circ \cdots \circ \partial_{n-1} \simeq 0.$
In fact, Theorem \ref{thm:abstract_Parshin} implies that $\partial_{i-1} \circ \partial_i \simeq 0$. To see this one chooses the $\C$ in \emph{loc. cit.} to be the stable $\infty$-category $\C_{\widehat{(0,i-2)}[i+1]}$, $\Sf_0 = \C_{\widehat{(0,i-2)}[i-1]}$, and $\Sf_1 = \C_{\widehat{(0,i-2)}[i]}$.
\end{proof}

\begin{example}[Kato reciprocity]\label{ex:Kato}
Let $X$ be an integral separated excellent scheme of pure dimension $n$. Let $\zeta$ denote an almost saturated flag of closed integral subschemes
$$\zeta=(X \supset Z_{n-1} \supset \cdots\supset Z_{i+1}\supset Z_{i-1}\supset \cdots \supset Z_0),$$
indexed by $j \neq i$, with $\dim Z_j = j$. If $i = 0$, we assume that $Z_1$ is proper over a field $k$. For every (not necessarily irreducible) reduced closed subscheme $Z_i$ of pure dimension $i$, and $Z_{i+1} \supset Z \supset Z_{i-1}$  we obtain a natural chain of localizing subcategories $\Sf_j:=\Perf_{|Z_j|}$ on $\C = \Perf(X)$.  Abstract Kato reciprocity (Corollary \ref{cor:abstract_Kato}) now implies the existence of a commutative diagram
\[
\xymatrix{
\Omega^n\Kb_{F_{X,\zeta}} \ar[r]^-{0} \ar[d]_{-\otimes_{F_{X,\zeta}}\Ab_{X,\zeta}}  & \Kb_{\D}, \\
\Omega^n\Kb_{\Ab_{X,\zeta}}, \ar[ru]_{\partial^n} &
}
\]
where we let $\D = \Perf(k)$ for $i = 0$, and $\Perf_{Z_0}(X)$ otherwise. As before, this implies that for an $(n+1)$-tuple of invertible elements of $F_{X,\zeta}$, we have
$$\prod_{Z_{i+1}\supset Z \supset Z_{i-1}}(f_0,\dots,f_n)_{\xi_Z} = 1,$$
where $Z$ is integral and of dimension $i$.
\end{example}

\subsection{Reciprocity for Contou-Carr\`ere Symbols}\label{sub:theproof}

In the following we fix a separated, reduced $k$-scheme $X$ of finite type and dimension $n$, a $k$-algebra $A$, and an integer $i$. As in Example \ref{ex:Kato}, $\zeta$ denotes an almost saturated flag of closed integral subschemes
$$\zeta=(X \supset Z_{n-1} \supset \cdots\supset Z_{i+1}\supset Z_{i-1}\supset \cdots \supset Z_0),$$
indexed by $0 \leq j \leq n$ with $j \neq i$, and satisfying $\dim Z_j = j$. The condition of being \emph{almost saturated} stipulates that up to the choice of $Z_{i+1} \supset Z_i\supset Z_{i-1}$, the flag cannot be further extended. If $i = 0$, we assume that $Z_1$ is proper over a field $k$.

Alluding to the notation of abstract Kato reciprocity (Corollary \ref{cor:abstract_Kato}), we define
$$\C = \Perf(X_A)\text{, } \qquad \qquad \Sf_j = \Perf_{|(Z_j)_A|}(X_A),$$
where $Z_i = Z$ is a not necessarily irreducible closed subset of pure dimension $i$, satisfying $Z_{i+1} \supset Z \supset Z_{i-1}$.

\begin{lemma}\label{lemma:CCefimov}
Using the notation introduced earlier, we have the following equivalences.
\begin{itemize}
\item[(a)] $\C_{\widehat{(0,i-1)}(i)\widehat{(i+1,n-1)}} \cong \Perf((\Comp \circ \Loc)^{n-i-1} \circ \Loc \circ (\Comp \circ \Loc)^i(X_A,(\xi_Z)_A))$ (see Definition \ref{defi:F_xi_A}). In particular, taking the colimit of the diagram of these stable $\infty$-categories indexed by all possible $Z$, we obtain $\Perf(A_{\zeta}^{}(X))$.

\item[(b)] For each $Z_{i+1}\supset Z\supset Z_{i-1}$ we denote by $\xi_Z$ the corresponding complete flag. Then we have $\C_{\widehat{(0,n-1)}} \cong \Perf(A_{X,\xi_Z})$.
\end{itemize}
\end{lemma}

\begin{proof}
The second assertion is a direct consequence of Theorem \ref{thm:general_efimov}(c). The first assertion is proven by similar means as the results in Subsection \ref{subsub:HLFcatcomp}: as in \emph{loc. cit.} one proceeds by induction, where the $i$-th step (due to the absence of completion) has to be treated separately (using Lemma \ref{lemma:pre_ind_step} instead of Corollary \ref{cor:ind_step}).
\end{proof}

Using the equivalences of stable $\infty$-categories, provided by Lemma \ref{lemma:CCefimov}, abstract Kato reciprocity implies the following corollary.

\begin{corollary}[Spectral Contou-Carr\`ere reciprocity]\label{cor:spectralCCrec}
The following composition
$$\Omega^n\Kb_{A^{}_{\zeta,Z}(X)} \to^{i_{\zeta}} \Omega^n\Kb_{A_{X,\xi_Z}^{}} \to^{\sigma_{X,\xi_Z}^A} \Kb_{A}$$
is null-homotopic (see Definition \ref{defi:F_xi_A}).
Taking the filtered colimit over $Z$, we obtain the composition
$$\colim_Z\Omega^n\Kb_{A_{\zeta,Z}^{}(X)} \to^{i_{\zeta}} \colim_Z\Omega^n\Kb_{A_{X,\xi_Z}} \to^{\colim\sigma_{X,\xi_Z}^A} \Kb_A$$
which is also null-homotopic.
\end{corollary}

\begin{proof}
Using Remark \ref{rmk:colimit} one obtains the second commuting triangle from the first (including the null-homotopy), by taking a colimit ranging over the collection of all possible $Z_{i+1} \supset Z \supset Z_{i-1}$. At the beginning of this subsection, we have already defined a chain of localizing subcategories $\Sf_j$ on $\C = \Perf(X_A)$, which allows us to evoke abstract Kato reciprocity (Corollary \ref{cor:abstract_Kato}). We only need to verify that one of the conditions (a) or (b) holds, in order to apply this result. If $i = 0$, then $Z_1$ is proper over $k$ by assumption. By virtue of Lemma \ref{lemma:push_perfect} we obtain a pushforward functor
$$\pi_*\colon \Sf_1 \cong \Perf_{(Z_1)_A}(X_A) \to \Perf(A),$$
which yields the required commutative diagram
\[
\xymatrix{
\Perf_{(Z_0)_A}(X_A) \ar@{^(->}[r] \ar[rd]_{\pi_*} & \Perf_{(Z_1)_A}(X_A) \ar[d]^{\pi_*} \\
& \Perf(A).
}
\]
If $i \geq 1$, there is nothing to check. This concludes the proof of the first assertion.

The second assertion also follows by applying abstract Kato reciprocity (Corollary \ref{cor:abstract_Kato}). For $j \neq i$ one defines $\Sf_j$ as before, and in degree $i$ one sets $\Sf_i =\colim_{Z}\Perf_{Z_A}(X_A)$, where $Z$ ranges over all closed subsets which are of pure dimension $i$ and satisfy $Z_{i-1} \subset Z \subset Z_{i+1}$.
\end{proof}

\begin{proof}[Proof of Theorem \ref{thm:classicalCC}]
Let $f_0,\dots,f_n$ be a commuting $(n+1)$-tuple of units in the ring $A_{\zeta}^{}(X)$. This corresponds to a map $\Sigma_+^{\infty}\mathbb{T}^{n+1} \to \Sigma^\infty_+(BA_{\zeta}(X)^{\grp})^{}$. The right hand side can be expressed as a colimit by definition of the ring $A_{\zeta}^{}(X)$ (see Definition \ref{defi:F_xi_A}). Because the torus is compact, the map factors through a map $\Sigma_+^{\infty}\mathbb{T}^{n+1} \to \Sigma_+^\infty(BA_{\zeta,Z}(X)^{\grp})^{}$ for some $Z$.

The ring $A_{\zeta,Z}(X)$ splits into a product over the irreducible components of $Z = \bigcup_{k=1}^m W_k$. Therefore, spectral Contou-Carr\`ere reciprocity \ref{cor:spectralCCrec} yields a commutative diagram
\[
\xymatrix{
\bigoplus_{k=1}^m\Omega^n\Kb_{A^{}_{\zeta,W_k}(X)} \ar[r]^{\;\;\;\;\;\;\;\cong} & \Omega^n\Kb_{A^{}_{\zeta,Z}(X)} \ar[r]^0 \ar[d]  & \Kb_{A} \\
& \Omega^n\Kb_{\Ab_{X,\xi_Z,A}^{}}. \ar[ru]_{\sigma_{X,\xi_Z}^A} &
}
\]
Passing to homotopy groups, and applying the resulting maps to the object represented by the Steinberg symbol $\{f_0,\dots,f_n\}$ (i.e. a higher commutator by Proposition \ref{prop:Steinberg}), we obtain the identity
$$\prod_{i=1}^m\pi_*\partial_{W_i}^n\{f_0,\dots,f_n\} = \prod_{i=1}^m(f_0,\dots,f_n)_{\xi_{W_i}} = 1.$$
This concludes the proof.
\end{proof}

\appendix

\section{Categorical and homotopical framework}\label{inftycats}

\subsection{\texorpdfstring{$\infty$-}{Infinity }Categories}

We briefly review the main ideas from the theory of $\infty$-categories that are repeatedly used in our work. For a more detailed overview, we refer the reader to Groth's survey \cite{Groth:2010fk}.

\subsubsection{Spaces are $\infty$-Groupoids}\label{subsub:spaces}

The only topological spaces that play a role for us are those which are homotopy equivalent to a CW-complex. The term \emph{space} (regardless of pointed or unpointed) will always refer to topological spaces of this type. Since every space $X$ is weakly equivalent to the geometric realization of the simplicial set of singular simplices $S_{\bullet}(X)$, we could equivalently work with simplicial sets.

We now remind the reader of a hierarchy on the homotopy category of (unpointed) spaces.

\begin{itemize}
\item[-] A \emph{homotopy $0$-type} is an unpointed space homotopy equivalent to a discrete topological space,
\item[-] a \emph{homotopy $1$-type} is an unpointed space with vanishing higher homotopy groups,
\item[-] a \emph{homotopy $n$-type} is an unpointed space $X$ with $\pi_k(X) = 0$ for $k \geq n+1$.
\end{itemize}

The category of homotopy $0$-types is equivalent to the category of \emph{sets}. The category of homotopy $1$-types is closely related to the category of (small) groupoids $\Gg$. To a groupoid $\Gg$, one simply assigns the geometric realization of its nerve $|N\Gg|$. Vice versa, given an unpointed topological space $X$, we have the Poincar\'e groupoid $\pi_{\leq 1}(X)$. Its set of objects is the set of points in $X$. A morphism from $x \in X$ to $y \in X$ is a homotopy class of paths connecting $x$ and $y$.

The natural map of groupoids $\Gg \to \pi_{\leq 1}(|N\Gg|)$ is not a \emph{strict isomorphism}. However, it is an \emph{equivalence} of groupoids. Using this fact, one can show that the above functors
{\emph{induce an equivalence between the $2$-category of \emph{groupoids} and the $2$-category of \emph{homotopy $1$-types}.}
This motivates the following slogan of modern homotopy category:
\begin{quote}
{\emph{The collection of homotopy $n$-types forms the $(n+1)$-category of $n$-groupoids. Unpointed spaces correspond to $\infty$-groupoids.}}
\end{quote}

\subsubsection{Simplicial Sets and $\infty$-Categories}\label{sect:reviewnerveabit}

Intuitively speaking, an $\infty$-category $\C$ is a category enriched in $\infty$-groupoids (i.e. unpointed spaces). Hence, for every pair of objects $X,Y \in \C$ we have a space of morphisms $\Hom_{\C}(X,Y)$. Since this space will only matter up to homotopy, composition should not be expected to be defined strictly, but only up to a homotopy, which itself is well-defined up to higher homotopies of all orders. It is difficult to extract a meaningful definition from this heuristic description, but its value should not be underestimated. To a large extent it is possible to work with $\infty$-categories as a blackbox, as long as one accepts that there is a well-behaved calculus of homotopy coherent commutative diagrams.

In the rigorous setting of quasi-categories (see e.g. Lurie's \cite{Lurie:bh}), one defines $\infty$-categories as simplicial sets satisfying a mild technical condition. This definition is motivated by the classical construction of nerves of categories. Recall that for a classical category $\C$ we define its nerve $N\C$ to be the simplicial set with objects as $0$-simplices, morphisms as $1$-simplices, composable pairs of morphisms as $2$-simplices, etc. Grothendieck observed that one can reconstruct a category from its nerve (even up to isomorphism of categories, see e.g. \cite{Lurie:bh}). A simplicial set is the nerve of a category, if and only if it satisfies a collection of strict horn-filling conditions, the most important one of which is explained below.

The set of $2$-simplices of $N\C$ can be understood as the set of commuting triangles as depicted below on the left:
\[
\xymatrix{
& Y \ar[rd]^{g} & \\
X \ar[ur]^{f} \ar[rr]^{gf} & & Z
}
\qquad
\xymatrix{
& Y \ar[rd]^{g} & \\
X \ar[ur]^{f} & & Z
}
\qquad
\xymatrix{
& Y \ar[rd]^{g} & \\
X \ar[ur]^{f} \ar[rr]^{h} & & Z
}
\]
The horn-filling condition in this particular case amounts to stating that every diagram as depicted above in the middle can be completed to a commuting triangle as above. For a classical category this can always be achieved in precisely one way.

Even if one does not know the definition of an $\infty$-category, one could try to guess what the nerve of an $\infty$-category should be. Accepting the above slogan that, whatever $\infty$-categories are, we want to have a good calculus of commutative diagrams, we arrive as a definition for the set of $2$-simplices in the nerve at the set of commuting triangles as depicted above on the right.
There are two interesting new features. First of all we cannot say that $h$ is the composition of $f$ and $g$. Rather, $h$ is one of possibly many compositions of $f$ and $g$. The invisible $2$-cell of the triangle above should be thought of as a homotopy connecting both sides. It turns out that if we no longer require horns to be filled uniquely, this is sufficient to characterize nerves of $\infty$-categories. This is precisely how quasicategories are defined by Joyal and in \cite{Lurie:bh}.

What separates the subcategory of classical categories from its complement in quasicategories is the existence of a \emph{strict} composition operation for morphisms. In $\infty$-categories, composition is only well-defined up to a contractible space of choices. It is this little bit of extra homotopical glue, which makes the theory of $\infty$-categories so flexible.

As a natural consequence of this liberality, the only possible notion of \emph{commutative} diagrams is automatically \emph{homotopy coherent} in a strong sense.

If $I_{\bullet}$ is a simplicial set, then an $I_{\bullet}$-indexed commutative diagram in an $\infty$-category $\C$ is a map of simplicial sets $I_{\bullet} \to \C$. A commutative square
\[
\xymatrix{
X \ar[r] \ar[d] & Y \ar[d] \\
Z \ar[r] & W
}
\]
for example is a map of simplicial sets $(\Delta^1)^2 \to \C,$ sending the $0$-simplices of the square $(\Delta^1)^2$ to the objects $X,Y,Z,W$.

\subsection{Stable $\infty$-Categories}\label{app:stable}

We refer the reader to \cite[Ch. 1]{Lurie:ha} for a more detailed account. Every $\infty$-category $\C$ has an associated homotopy category $\Ho(\C)$, where the set of morphisms is defined to be the set of connected components
$\Hom_{\Ho(\C)}(X,Y) = \pi_0\Hom_{\C}(X,Y).$ A \emph{stable $\infty$-category} has a natural triangulated structure on its homotopy category. Examples include the stable $\infty$-category of \emph{spectra}, and other enhancements of triangulated categories (for example pre-triangulated dg-categories).

By definition, a stable $\infty$-category $\C$ is pointed, i.e. there exists an initial and final object $\bullet$. Moreover, we assume the existence of finite limits and colimits, as well as that a commutative diagram
\begin{equation} \label{aaa_e}
\xymatrix{
X \ar[r] \ar[d] & Y \ar[d] \\
Z \ar[r] & W
}
\end{equation}
is a pullback if and only if it is a pushout. The endofunctors $\Sigma\colon \C \to \C$, and $\Omega\colon \C \to \C$,
\[
\xymatrix{
X \ar[r] \ar[d] & \bullet \ar[d] \\
\bullet \ar[r] & \Sigma X,
}
\qquad \qquad \qquad
\xymatrix{
\Omega X \ar[r] \ar[d] & \bullet \ar[d] \\
\bullet \ar[r] & X,
}
\]
are defined by virtue of the cocartesian, respectively cartesian squares above. As a consequence of the definition of a stable $\infty$-category, $\Sigma$ and $\Omega$ are inverse equivalences. The induced functors on the homotopy category $\Ho(\C)$ give rise to the translation functors of the triangulated structure of $\Ho(\C)$. The distinguished triangles are the images of bi-cartesian squares of the form of Diagram \ref{aaa_e} with $W=\bullet$. We denote the $\infty$-category of stable $\infty$-categories by $\st$.

\section{Derived Completion of Schemes and Categories}\label{boundary}
\begin{verse}
\emph{\hfill ``You complete me.'' - J. Maguire}
\end{verse}
The study of \emph{derived completion} goes back to work of Greenlees--May \cite{Greenlees-May}, Dwyer--Greenlees \cite{Dwyer-Greenlees}, and was embedded into the realm of derived algebraic geometry by Lurie \cite{DAGXII} and Gaitsgory--Rozenblyum \cite{Gaitsgory:2011lh}. We will mostly follow Lurie \cite[Ch. 4 \& 5]{DAGXII}.

For every ring $R$, and an ideal $I$, we recall (see Subsection \ref{sub:derivedcompletion}) Lurie's definition of the derived completion $\widehat{R}_I^{\der}$. This is a connective $E_{\infty}$-ring spectrum (\cite[\S 4.2]{DAGXII}). If $R$ is Noetherian, the derived completion is canonically equivalent to its classical counterpart \cite[Prop. 4.3.6]{DAGXII}. However, for a non-Noetherian ring $R$, the derived completion is genuinely different, which affects the stable $\infty$-category of perfect complexes.

In Subsection \ref{sub:modification} we rephrase and generalize constructions of Efimov \cite{Efimov:2010fk}; we show how perfect complexes on the derived completions can be understood by an abstract construction on the level of stable $\infty$-categories.

We then use a calculation of Porta--Shaul--Yekutieli \cite{MR3181733} (see also \cite{Porta2015}) to conclude that $\widehat{R}_I^{\der}$ is in fact a classical ring, if $I$ is \emph{weakly proregular} in $R$ (see Definition \ref{defi:proregular}). This will allow us to remove derived rings from our work in retrospect.

\subsection{Derived Completion}\label{sub:derivedcompletion}

We fix a ring $R$ and a finitely generated ideal $I$. We briefly review the notion of \emph{derived complete} complexes of $R$-modules, as studied in \cite[\S 4.2]{DAGXII}. A review of this material in the language of triangulated categories is given in \cite[Tag 091N]{stacks-project}. We say that a complex of $R$-modules is $I$-\emph{complete}, if for every $x \in I$ the homotopy limit of the inverse system
$$ \lim [\cdots \rightarrow M \xrightarrow{x} M \xrightarrow{x} M], $$
i.e. the fibre of
$$\prod_{n \geq 0} M \xrightarrow{x} \prod_{n \geq 1} M,$$
vanishes in the stable $\infty$-category $\dMod(R)$. This is precisely the homotopical analogue of the condition that $x$ acts \emph{topologically nilpotently} on $M$, i.e.
$\bigcap_{n \in \Nb} I^n M = 0.$
The resulting full subcategory of $I$-complete objects in $\dMod(R)$ will be denoted by $\dMod(R)^{I\text{-comp}}.$ Note that in \cite{DAGXII} this subcategory is characterized differently (cf. \cite[Cor. 4.2.8 \& 4.2.12]{DAGXII}). For abstract reasons, the inclusion
$\dMod(R)^{I\text{-comp}} \subset \dMod(R)$
possesses a left adjoint (see \cite[Lemma 4.2.2]{DAGXII})
$$\widehat{(-)}^{\der}\colon\dMod(R) \to \dMod(R)^{I\text{-comp}},$$
which will be referred to as \emph{derived completion}. By Remark 4.2.6 in \emph{loc. cit.} this is moreover a symmetric monoidal functor, hence we obtain an $E_{\infty}$-ring spectrum $\widehat{R}_I^{\der}$; the \emph{derived completion of $R$ at $I$}.

\subsection{Modification of Stable $\infty$-Categories}\label{sub:modification}

Let $X$ be a scheme, $Z$ a closed subscheme, which is defined by a locally finitely-generated sheaf of ideals. The aforementioned derived completion operation allows one to define the derived formal scheme $\widehat{X}^{\der}_Z$ (see \cite[Def. 5.1.1]{DAGXII}). If $X$ is Noetherian, it is canonically equivalent to the formal completion $\widehat{X}_Z$. We denote by  $U$ the open complement $X \setminus Z$. Recall that $\QC(X)$ denotes the stable $\infty$-category of complexes of quasi-coherent sheaves on $X$. Pullback along the open immersion $j\colon U \to X$ induces a localization
$$j^*\colon\QC(X) \to \QC(U).$$
The kernel, i.e. the full subcategory of complexes $\F$ satisfying $j^*\F \simeq 0$, will be denoted by $\QC_Z(X)$. Since $j^*\F \simeq 0$ amounts to $\F|_U \simeq 0$, it is sensible to refer to such a complex of sheaves $\F$ as having \emph{set-theoretic support contained in $|Z|$}.

The $\infty$-category of compact objects in $\QC_Z(X)$ is given by $\Perf_Z(X)$, i.e. perfect complexes on $X$ with set-theoretic support contained in $|Z|$. Moreover, $\QC_Z(X)$ is \emph{compactly generated}, amounting to the relation
$\QC_Z(X) \cong \Ind \Perf_Z(X).$

Besides passing to open subschemes (\emph{localization} in terms of stable $\infty$-categories), and restricting set-theoretic support (\emph{localizing subcategories}), a third geometrically relevant operation is given by considering complexes of sheaves on the formal completion $\widehat{X}^{\der}_{Z}$.

Quasi-coherent sheaves on the formal completion $\widehat{X}_Z$ are closely related to the $\infty$-category $\QC_Z(X)$. In fact, we have an agreement of the full subcategories of almost connective complexes (\cite[Thm. 5.1.9]{DAGXII})
\begin{equation}\label{eqn:algebraizable_perfect}\QC(\widehat{X}_Z^{\der})^{\mathrm{acn}} \cong \QC_Z(X)^{\mathrm{acn}} \cong (\Ind \Perf_Z(X))^{\mathrm{acn}}.\end{equation}

Our main interest lies in the category of perfect complexes $\Perf(\widehat{X}^{\der}_Z)$ on $\widehat{X}^{\der}_Z$. Unlike the case of a scheme, it is not sufficient to consider the full subcategory of compact objects in $\QC(\widehat{X}^{\der}_Z)$ (denoted by upper script ``c''). As we have seen earlier, $\QC(\widehat{X}^{\der}_Z)^c \cong \QC_Z(X)^c \cong \Perf_Z(X)$ only yields perfect complexes with set-theoretic support contained in $|Z|$. In fact it is not very difficult to verify that structure sheaf $\Oo$ on the formal scheme $\mathrm{Spf}\;k[[t]]$ is not compact.

In the remainder of this subsection we will use the observations described here to develop categorical analogues of the geometric operations given by the removal of closed subschemes and completion.

\subsubsection{Completion}\label{subsub:completion}

Let $\C$ be an idempotent complete stable $\infty$-category, with a full stable subcategory $\Sf$, which is idempotent complete. We refer to such an $\Sf$ simply as \emph{localizing} subcategory of $\C$. Inspired by \eqref{eqn:algebraizable_perfect} we make the following definition for the \emph{completion of $\C$ at $\Sf$}. Proposition \ref{prop:affine_completion} below compares this definition with the derived completion of rings.

\begin{definition}\label{defi:completion}
The completion $\C_{\widehat{\Sf}}$ is defined to be the idempotent closure of the \emph{essential image}
$$\image[\C \to \Ind \Sf]$$
of the functor sending $\Gg \in \C$ to the presheaf\footnote{Recall that $\Ind(\C)$ can be realized as the $\infty$-category of limit-preserving functors $\C^{\op} \to \Sp$.} $\F \mapsto \Hom(\F,\Gg)$.
\end{definition}

Note that, because the inclusion $\Sf \subset \C$ preserves finite colimits by assumption, the presheaf associated to $\F \in \C$ preserves colimits as well, and thus yields a well-defined functor $\C \to \Ind \Sf$.

Just like in Efimov's \cite[p. 8]{Efimov:2010fk}, we think of $\C_{\widehat{\Sf}}$ as a completion on the level of \emph{Hom-spaces}, not altering the class of objects. The result below can be also found in \cite[Remark 5.3]{Efimov:2010fk} for Noetherian rings.

\begin{proposition}\label{prop:affine_completion}
If $\C \cong \Perf(R)$, where $R$ is a ring, and $\Sf = \Perf_{V(I)}(X)$ for some ideal $I \subset R$, then $\Perf(R)_{\widehat{\Sf}} \cong \Perf(\widehat{R}^{\der}_I)$.\end{proposition}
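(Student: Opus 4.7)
The plan is to unpack Definition \ref{defi:completion} in the case $\Sf = \Perf_{V(I)}(R)$, identify the resulting mapping spaces via Greenlees--May/Dwyer--Greenlees duality, and recognise the result as $\Perf(\widehat{R}^{\der}_I)$.

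First, I would identify the functor $\Perf(R)\to\Ind(\Perf_{V(I)}(R))$ concretely. By \eqref{eqn:algebraizable_perfect} we have $\Ind(\Perf_{V(I)}(R))\simeq\QC_{V(I)}(\Spec R)$, realised inside $\dMod(R)$ as the full sub-$\infty$-category of complexes with set-theoretic support on $V(I)$. The functor sends $M\in\Perf(R)$ to the presheaf $N\mapsto\Map_R(N,M)$ on $\Perf_{V(I)}(R)$; since the inclusion $\iota\colon\dMod(R)^{I\text{-tors}}\hookrightarrow\dMod(R)$ admits a right adjoint $\Gamma_I$ (derived $I$-local cohomology) and $\Map_R(N,M)\simeq\Map_R(N,\Gamma_I M)$ for $N\in\Perf_{V(I)}(R)\subset\dMod(R)^{I\text{-tors}}$, this presheaf is represented by $\Gamma_I(M)$. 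Hence the functor of Definition \ref{defi:completion} is canonically identified with $\Gamma_I\colon\Perf(R)\to\QC_{V(I)}(\Spec R)$.

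Second, I would invoke Greenlees--May/Dwyer--Greenlees duality, valid for a finitely generated ideal $I$, to compute the mapping spaces in $\C_{\widehat{\Sf}}$ and then factor through $\Perf(\widehat{R}^{\der}_I)$. The duality provides mutually inverse equivalences $\Gamma_I\colon\dMod(R)^{I\text{-comp}}\xrightarrow{\sim}\dMod(R)^{I\text{-tors}}$ and $\Lambda_I:=\widehat{(-)}^{\der}_I$ in the reverse direction, together with the natural equivalence $\Map_R(\Gamma_I M,N)\simeq\Map_R(M,\Lambda_I N)$. Combined with the identification $\Lambda_I N\simeq N\otimes_R^L\widehat{R}^{\der}_I$ for $N\in\Perf(R)$, one obtains
\begin{equation*}
\Map_{\C_{\widehat{\Sf}}}(M,N)\simeq\Map_R(\Gamma_I M,\Gamma_I N)\simeq\Map_R(M,\Lambda_I N)\simeq\Map_{\widehat{R}^{\der}_I}\!\bigl(M\otimes_R^L\widehat{R}^{\der}_I,\;N\otimes_R^L\widehat{R}^{\der}_I\bigr).
\end{equation*}
The functor $\Perf(R)\to\C_{\widehat{\Sf}}$ therefore factors as $\Perf(R)\xrightarrow{-\otimes_R^L\widehat{R}^{\der}_I}\Perf(\widehat{R}^{\der}_I)\hookrightarrow\C_{\widehat{\Sf}}$, the second arrow being fully faithful with stable essential image inside $\Ind(\Sf)$. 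Since the essential image of the first arrow contains the compact generator $\widehat{R}^{\der}_I=R\otimes_R^L\widehat{R}^{\der}_I$ of $\Perf(\widehat{R}^{\der}_I)$, together with all its shifts and finite sums, the smallest idempotent-complete stable subcategory of $\Ind(\Sf)$ containing the essential image is exactly $\Perf(\widehat{R}^{\der}_I)$, giving the desired equivalence $\Perf(R)_{\widehat{\Sf}}\simeq\Perf(\widehat{R}^{\der}_I)$.

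The main obstacle will be carefully justifying Greenlees--May/Dwyer--Greenlees duality and the identification $\Lambda_I N\simeq N\otimes_R^L\widehat{R}^{\der}_I$ on $\Perf(R)$ in this (possibly) non-Noetherian setting; both follow once $I$ is finitely generated, so that the Koszul complex on a generating set of $I$ provides a convergent cofinal system computing $\Gamma_I$ and $\Lambda_I$, but the verification requires some care. A secondary delicate point is interpreting the phrase ``idempotent closure of the essential image'' in Definition \ref{defi:completion} as the smallest idempotent-complete stable subcategory of $\Ind(\Sf)$ containing the essential image --- the natural reading in the $\infty$-categorical framework --- so that cofibers of morphisms between objects in the essential image (morphisms which a priori need not lift to maps in $\Perf(R)$, because $\Map_{\widehat{R}^{\der}_I}(M\otimes\widehat{R}^{\der}_I,N\otimes\widehat{R}^{\der}_I)$ is strictly larger than $\Map_R(M,N)$) are automatically absorbed, allowing the image of $\Perf(R)$ to generate all of $\Perf(\widehat{R}^{\der}_I)$.
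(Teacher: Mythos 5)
Your proposal is correct, and at the structural level it follows the same route as the paper's proof: both work inside $\Ind \Sf \simeq \QC_{V(I)}(\Spec R)$, exhibit a fully faithful copy of $\Perf(\widehat{R}^{\der}_I)$ there compatibly with the functor from $\Perf(R)$, and conclude from the two inclusions --- the image of $\Perf(R)$ lands in that copy, and conversely $\Perf(\widehat{R}^{\der}_I)$ is thickly generated by the structure sheaf $\widehat{R}^{\der}_I = \Lambda_I R$, which is in the image. The difference lies in how the key input is justified: the paper quotes Lurie (DAG XII, Thm.\ 5.1.9 and Prop.\ 5.1.17) for the embedding $\Perf(\widehat{R}^{\der}_I)\hookrightarrow \QC_{V(I)}(\Spec R)$, whereas you re-derive the mapping-space identification by hand from the torsion--complete (Greenlees--May/Dwyer--Greenlees) equivalence and the formula $\Lambda_I N \simeq N\otimes_R^L\widehat{R}^{\der}_I$ on $\Perf(R)$. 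Your chain $\Map_{\C_{\widehat{\Sf}}}(M,N)\simeq\Map_R(\Gamma_I M,\Gamma_I N)\simeq\Map_R(M,\Lambda_I N)\simeq\Map_{\widehat{R}^{\der}_I}(\widehat{M},\widehat{N})$ is fine for $M,N\in\Perf(R)$ (the last step is just the extension-of-scalars adjunction), but note where the genuinely non-formal content then sits: the asserted full faithfulness of the second arrow $\Perf(\widehat{R}^{\der}_I)\to\Ind\Sf$ requires comparing $R$-linear and $\widehat{R}^{\der}_I$-linear mapping spectra on $I$-complete objects, i.e.\ the equivalence $\dMod(R)^{I\text{-comp}}\simeq\dMod(\widehat{R}^{\der}_I)^{I\text{-comp}}$ for finitely generated $I$; this is exactly the content of the results the paper outsources to DAG XII (\S 4.3/5.1), and should be invoked or proved via the Koszul tower rather than waved through. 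Finally, your reading of ``idempotent closure of the essential image'' as the smallest idempotent-complete \emph{stable} subcategory is the right one, and the paper's own proof uses it implicitly when it deduces $\Perf(\widehat{R}^{\der}_I)\subset\Perf(R)_{\widehat{\Sf}}$ from generation by $\Oo$.
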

\begin{proof}
In the following we denote by $V(I) \subset \Spec R$ the closed subset corresponding to the ideal $I$. We begin the proof by connecting the derived formal completion $\widehat{R}_I^{\der}$ of Subsection \ref{sub:derivedcompletion} with $\Perf_{V(I)}(R)$. Theorem 5.1.9 and Proposition 5.1.17 in \cite{DAGXII} imply the existence of a commutative diagram
\[
\xymatrix@=0.1in{
\Perf(\widehat{R}_I^{\der}) \ar@{^{(}->}[rr] & & \QC_{V(I)}(\Spec R) \\
& \Perf(R) \ar[lu] \ar[ru] &
}
\]
of $\infty$-categories. Using that $\QC_{V(I)}(\Spec R)$ is compactly generated by $\Perf_{V(I)}(R)$, and the definition of $\Perf(R)_{\widehat{\Sf}}$ as the idempotent completion of the  essential image of the functor
$$\Perf(R) \to \Ind \Perf_{V(I)}(R) \cong \QC_{V(I)}(\Spec R),$$ we obtain a commutative diagram
\[
\xymatrix@=0.14in{
\Perf(\widehat{R}_I^{\der}) \ar@{^{(}->}[rr] & & \Ind \Perf_{V(I)}(R) \\
\Perf(R) \ar@{->>}[rr] \ar[u] & & \Perf(R)_{\widehat{\Sf}}, \ar@{^{(}->}[u] \ar@{_{(}-->}[ull]
}
\]
where we use the universal property of idempotent completion to produce the dashed arrow, together with the essential surjectivity of the lower horizontal functor up to idempotent completion. In order to conclude the proof, it suffices to show that we have an inclusion
$\Perf(\widehat{R}_I^{\der}) \subset \Perf(R)_{\widehat{\Sf}}$
of full subcategories of $\Ind \Perf_{V(I)}(R)$. This follows from the fact that $\Perf(\widehat{R}_I^{\der})$ is compactly generated by the structure sheaf (or free module) $\Oo$, which is contained in $\Perf(R)_{\widehat{\Sf}}$ by the commuting diagram above.
\end{proof}

In the result below, we use the notion of \emph{weak proregularity}, which was introduced by Alonso--Jeremias--Lipman \cite{MR1422312} and Schenzel \cite{MR1973941}.

\begin{definition}\label{defi:proregular}
Let $R$ be a ring, and $f \in R$ an element. We denote by $K(R,f)$ the Koszul complex $[R \to^{\cdot{} f} R]$, concentrated in degrees $-1$ and $0$. For a tuple $\overline{f} = (f_0,\dots, f_n)$ we define the Koszul complex as $K(R,\overline{f}) = \bigotimes_{i=0}^nK(R,f_i)$. An ideal $I \subset R$ is said to be \emph{weakly proregular}, if there exist generators $(f_0,\dots,f_n)$, such that for all integers $k$, the inverse system of cohomology groups $\left(H^k(K(R,(f_0^i,\dots,f_n^i)))\right)_i$ is \emph{pro-zero}, i.e. equivalent to the zero object in the category of pro-abelian groups.
\end{definition}

Every ideal $I$ in a Noetherian ring is weakly proregular. Moreover, the notion of weak proregularity is evidently invariant under flat base change. Hence, if $R$ is a Noetherian $k$-algebra, and $A$ is an arbitrary $k$-algebra, then the ideal $I_A = I \otimes_k A \subset R_A = R \otimes_k A$ is weakly proregular.

\begin{proposition}\label{prop:proregular}
If $I$ is \emph{weakly proregular} in $R$ (see Definition \ref{defi:proregular}), then $\widehat{R}^{\der}_I \cong \widehat{R}_I$. In particular, we see that, for a Noetherian $k$-algebra $R$, an ideal $I$, and an arbitrary $k$-algebra $A$, we have $\widehat{(R_A)}^{\der}_{I_A} \cong \widehat{(R_A)}_{I_A}$.
\end{proposition}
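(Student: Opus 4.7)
My plan is to reduce the second assertion to the first by the flat base change already discussed in the paragraph preceding the proposition: since $k$ is a field the map $k\to A$ is flat, and hence so is $R\to R_A = R\otimes_k A$. Weak proregularity is preserved by flat base change, and $I$ is automatically weakly proregular in the Noetherian ring $R$, so $I_A$ is weakly proregular in $R_A$. Applying the first assertion to $(R_A,I_A)$ then yields the desired equivalence, and I may therefore concentrate on the first assertion.

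For the first assertion, I would invoke the theorem of Porta--Shaul--Yekutieli \cite{MR3181733}, which says that when $I\subset R$ is weakly proregular, the classical $I$-adic completion $\widehat{R}_I$ is already derived $I$-complete in the sense of Subsection \ref{sub:derivedcompletion}, and the natural map $R\to\widehat{R}_I$ has $I$-local cofiber. By the universal property of Lurie's derived completion functor $\widehat{(-)}^{\der}$, these two properties together force the canonical comparison $\widehat{R}^{\der}_I\to\widehat{R}_I$ to be an equivalence of connective $E_{\infty}$-ring spectra. In particular $\widehat{R}^{\der}_I$ is concentrated in degree zero and coincides with the classical completion as an ordinary ring.

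The main (and rather mild) obstacle is a terminological compatibility: the result in \cite{MR3181733} is formulated in triangulated terms via the left derived functor of classical $I$-adic completion, whereas Lurie \cite{DAGXII} works $\infty$-categorically and characterizes $I$-complete objects $M$ by the vanishing of each of the homotopy limits $\lim[\cdots\xrightarrow{x}M\xrightarrow{x}M]$ for $x\in I$. One must verify that these two descriptions of $I$-complete objects coincide before the theorem of \emph{loc. cit.} can be substituted into the adjunction defining $\widehat{(-)}^{\der}$. This verification is standard, reducing via the Koszul complexes appearing in Definition \ref{defi:proregular} to the very definition of weak proregularity. Once this identification is in place, the proof is essentially a citation.
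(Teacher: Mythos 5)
Your proposal is correct, but it takes a genuinely different route from the paper. The paper's proof first identifies $\widehat{R}^{\der}_I$ with the endomorphism algebra of the image of $R$ in the categorical completion $\Perf(R)_{\widehat{\Perf_{V(I)}(R)}}$ (via Proposition \ref{prop:affine_completion}), then chooses a perfect generator $M$ of $\Perf_{V(I)}(R)$ and uses Morita theory, $\Ind \Perf_{V(I)}(R) \simeq \dMod(B^{\op})$ with $B = \End_R(M)$, to recognize this endomorphism algebra as the double centralizer $\End_B(M)$, and finally quotes the double centralizer theorem of Porta--Shaul--Yekutieli \cite{MR3181733}, which states that for weakly proregular $I$ this double centralizer is the classical completion $\widehat{R}_I$; discreteness then pins down the $E_{\infty}$-structure. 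You instead argue through the universal property of Lurie's completion: $\widehat{R}_I$ is derived complete (this needs only that $I$ is finitely generated), and $R \to \widehat{R}_I$ becomes an equivalence after derived completion, which is exactly where weak proregularity enters, via the identification of $\widehat{R}^{\der}_I$ with $\lim_n K(R,(f_0^n,\dots,f_n^n))$ and the pro-vanishing of the lower Koszul cohomologies (together with surjectivity of the system in the top degree). This is a legitimate and arguably more elementary argument, and it has the advantage that the comparison map is an $E_{\infty}$-map from the outset, so no separate discussion of the multiplicative structure is required. Two caveats: the statement you attribute to \cite{MR3181733} is not the theorem of that paper used here (its main result is the double centralizer theorem); the input you actually need is the Koszul/telescope comparison of derived and adic completion for weakly proregular ideals, due to Schenzel and to Porta--Shaul--Yekutieli's companion work on the homology of completion and torsion, or else it must be carried out directly as you sketch. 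Relatedly, the ``standard verification'' you defer --- that Lurie's completion at a finitely generated ideal is computed by the limit of Koszul complexes on powers of the generators --- is the real crux of your argument and should be cited or proved, though it is indeed standard.
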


\begin{proof}
To prove this assertion we cite the main result of Porta--Shaul--Yekutieli \cite[Thm. 4.2]{MR3181733}. They prove that for every perfect generator $M \in \Perf_{V(I)}(A)$, the so-called \emph{double centralizer} is equivalent to the classical formal completion $\widehat{A}$. The double centralizer of $M$ is defined as follows. First one introduces the $E_1$-algebra $B = \End_R(M)$. The $E_1$-algebra $\End_B(M)$ is by definition the double centralizer of $M$.

We relate $\widehat{R}_I^{\der}$ to the double centralizer by observing that by definition its underlying $E_1$-ring agrees with the endomorphism algebra of the image of $R$ in $\Perf(R)_{\widehat{V(I)}}$:
$$\widehat{R}_I^{\der} \cong \End_{\Perf(R)_{\widehat{V(I)}}}(R).$$
The map $\Perf(R) \to \Ind \Perf_{V(I)}(R)$ is given by sending a module $N$ to the presheaf $\Hom_R(-,N)$ on $\Perf_{V(I)}(R)$. Since $M$ is a generator, we have $\Ind \Perf_{V(I)}(R) \cong \dMod(\End_R(M)^{\op}) \cong \dMod(B^{\op})$. In particular, we see that the $R$-module $R$ is sent to
$\Hom_R(M,R) = M^{\vee} \in \dMod(B)^{\op}.$
Thus, we have
$$\End_{B^{\op}}(M^{\vee}) \cong \End_{B}(M).$$
The right hand side is by definition the double centralizer of $M$, and therefore, by \emph{loc. cit.} agrees with the classical completion $\widehat{R}$. In particular, since this is a discrete $E_1$-ring, this argument specifies the $E_{\infty}$-structure as well.
\end{proof}

Since the Yoneda embedding of $\Sf$ is fully faithful, one obtains that $\Sf$ embeds fully faithfully into the formal completion $\C_{\widehat{\Sf}}$.

\begin{definition}\label{defi:pre-iteration}
Let $\Sf \subset \Tf \subset \C$ be a chain of localizing subcategories of $\C$. Then, we denote by
\begin{enumerate}

\item[(a)] $\Tf_{\widehat{\Sf}} \subset \C_{\widehat{\Sf}}$ the localizing subcategory given by the idempotent closure of the essential image
$\image[\Tf \to \C_{\widehat{\Sf}}],$ and by

\item[(b)] $\Tf_{(\Sf)}$ the idempotent completion of the essential image
$\image[\Tf \to \C/\Sf].$

\end{enumerate}
\end{definition}

As dictated by geometric intuition, completion of $\C$ at $\Sf$, followed by completion at $\Tf$, yields an $\infty$-category equivalent to $\C_{\widehat{\Sf}}$. Similarly, the completion of $X$ at $Z$ should be canonically equivalent to the completion of $U$ at $Z$, if $U$ is any open subscheme containing $Z$. This is the content of the next lemma, see also \cite[Thm. 4.1(iii)]{Efimov:2010fk}:

\begin{lemma}\label{lemma:pre-iteration}
\begin{itemize}
\item[(a)] Using the notation of Definition \ref{defi:pre-iteration}, the natural map
$\C_{\widehat{\Sf}} \xrightarrow{\cong} (\C_{\widehat{\Sf}})_{\widehat{\Tf_{\widehat{\Sf}}}}$
is an equivalence.
\item[(b)] Let $\Sf, \Tf$ be localizing subcategories of $\C$ such that for $X \in \Sf$ and $Y \in \Tf$ we have $\Hom_{\C}(X,Y)\cong 0$. We denote by $\D$ the idempotent completion of $\C/\Sf$. Then we have $\D_{\widehat{\Tf}} \cong \C_{\widehat{\Tf}}$.
\end{itemize}
\end{lemma}

\begin{proof}
(a) By definition, the right hand side agrees with the the essential image (up to idempotent completion)
$$\image[\C_{\widehat{\Sf}} \to \Ind \Tf_{\widehat{\Sf}}]^{\ic} \cong \image[\image[\C \to \Ind \Sf]^{\ic} \to \Ind(\image[\Tf \to \Ind \Sf]^{\ic})]^{\ic}.$$
The latter is equivalent to the essential image (up to idempotent completion)
$\image[\C \to \Ind \Sf]^{\ic},$
which agrees with $\C_{\widehat{\Sf}}$ by definition.
(b) At first we want to show that for $X \in \Sf$ and an arbitrary object $Y \in \C$ we have that the natural morphism of spaces of morphisms
$\Hom_{\C}(X,Y) \to \Hom_{\C/\Tf}(X,Y)$
is an equivalence. It suffices to show this for
$\Hom_{\Ho(\C)}(X,Y) \to \Hom_{\Ho(\C/\Tf)}(X,Y)$
by virtue of Whitehead's Lemma. This is a map of abelian groups, and hence we need to verify surjectivity and injectivity. A morphism $X \to^{\bar{f}} Y$ in $\C/\Tf$ can be represented by a zigzag
$X \to Y' \leftarrow Y$,
with the right hand arrow having fibre $F$ in $\Tf$. Since we have a distinguished triangle
$$\Hom_{\C}(X,F) \to \Hom_{\C}(X,Y') \to \Hom_{\C}(X,Y) \to \Sigma\Hom_{\C}(X,F)$$
and $\Hom_{\C}(X,F) \cong 0$, since $F \in \Tf$, we see that $\Hom_{\C}(X,Y) \cong \Hom_{\C}(X,Y')$.
A similar  argument can be used to show injectivity. This shows that we have a commutative diagram of stable $\infty$-categories
\[
\xymatrix{
\C \ar[d] \ar[rd] & \\
\C/\Tf \ar[r] & \Ind \Sf.
}
\]
This implies that the essential images of the right-pointing functors agree, and therefore shows $\C_{\widehat{\Sf}} \cong (\C/\Tf)_{\widehat{\Sf}}$.
\end{proof}

\begin{definition}\label{defi:complete-remove}
For a localizing subcategory $\Sf \subset \C$ we denote by $\C_{\widehat{(\Sf)}}$ the idempotent completion of the localization $\C_{\widehat{\Sf}}/\Sf$ of $\C_{\widehat{\Sf}}$ at $\Sf$.
\end{definition}

This localization should be imagined as the $\infty$-category  of perfect complexes on a punctured formal neighbourhood. For $X = \Spec R$ an affine scheme, and $Z = V(I)$ a closed subset, let $\Sf = \Perf_Z(X) \subset \Perf(X)$. We have $\Perf(X)_{\widehat{(\Sf)}} \cong \Perf(\Spec \widehat{R}^{\der}_I \setminus V(I))$.

See Efimov's \cite[Thm. 6.1]{Efimov:2010fk} for a global analogue of the following statement.
\begin{proposition}\label{prop:efimov}
Let $X$ be an excellent reduced scheme. Then the flag of localizing subcategories $\Sf_0,\dots, \Sf_{n-1}$ induced by
$$\xi\colon X = Z_n \supset Z_{n-1} \supset \dots \supset Z_0,$$
where we define $\Sf_i = \Perf_{Z_i}(X)$,
satisfies
$$\Perf(X)_{\widehat{(0,n-1)}} \cong \Perf(F_{X,\xi}),$$
where $F_{X,\xi}$ was defined in Definition \ref{defi:HLF_complete_localize}, and we use the notation of Definition \ref{defi:iteration}. Assume moreover that $X$ is an excellent, reduced $k$-scheme, where $k$ is a field. For every commutative $k$-algebra $A$ we have a natural equivalence
$$\Perf(X_A)_{\widehat{(0,n-1)}} \to \Perf(A_{X,\xi}),$$
where $A_{X,\xi} \cong \Ab_{X_A}(\xi,\Oo_{X_A})$.
\end{proposition}

The proof will be given in the next paragraph. Reasoning inductively, we will break the lemma down into several steps of independent interest.

\subsubsection{Higher Local Fields via Categorical Completion}\label{subsub:HLFcatcomp}
Recall Proposition \ref{prop:affine_completion}:
for $R$ a ring, and an ideal $I \subset R$, the functor $\Perf(R) \to \Perf_I(\widehat{R}_I^{\der})$ induces an equivalence
\begin{equation}\label{eqn:nat_functor}
\Perf(\widehat{R}_Z^{\der}) \to \Perf(R)_{\widehat{\Perf_{V(I)}(R)}}.
\end{equation}
The following Lemma uses the notion of \emph{equiheighted} ideals, and localization at equiheighted ideals, which were discussed in Definition \ref{defi:equiheighted_localization}.

\begin{lemma}\label{lemma:pre_ind_step}
Let $R$ be an excellent reduced $k$-algebra, and $I \subset R$ a radical equiheighted ideal in $R$, of height $1$. Moreover we assume that $R$ is semi-local, i.e. that the set of maximal ideals $\Max(R)$ is finite (therefore defining a closed subset of $\Spec R$). Then, for every $k$-algebra $A$, we have a canonical equivalence of stable $\infty$-categories
$$\Perf_{V(I_A)}(\Spec ((R_A)_{I})) \cong \Perf_{V(I_A)}(\Spec R_A \setminus \Max(R)),$$
where $(R_A)_{I})$ denotes the ring obtained by localizing $R_A$ at the equiheighted ideal $I$ (see Definition \ref{defi:equiheighted_localization}).
In particular, for the flag $S_0 = \Perf_{\Max(R)_A}(R_A) \subset \Perf_{V(I_A)}(R_A)$ the equivalence
$$\Perf((R_A)_I)_{[1]} \cong \Perf(R_A)_{(0)[1]},$$
using the notation of Definition \ref{defi:iteration}.
\end{lemma}

\begin{proof}
Let $\Uu^{\text{aff}}$ be the set of affine open subsets $\Spec R_U$ of $\Spec R$, containing all minimal prime ideals above $I$ (i.e. containing the generic points of $V(I) \subset \Spec R$). Inclusion of subsets induces a partial ordering on $\Uu^{\text{aff}}$. By definition, the localization $(R_A)_I$ can be expressed as the direct limit of rings
$(R_A)_I \cong \colim_{U \in \Uu^{\text{aff}}} (R_U)_A.$
In particular, we obtain
$$\Perf((R_A)_I) \cong \colim_{U \in \Uu^{\text{aff}}} \Perf((R_U)_A).$$
The same statements are true with support condition, reading as
$$\Perf_{V(I_A)}((R_A)_I) \cong \colim_{U \in \Uu^{\text{aff}}} \Perf_{V(I_A)}((R_U)_A).$$
Let $\Uu$ be the set of all open subsets $U \subset \Spec R$, containing all minimal prime ideals above $I$. Since every open subset is a union of affine open subsets, $\Uu^{\text{aff}} \subset \Uu$ is a final directed subset. Hence we have
$$\colim_{U \in \Uu^{\text{aff}}} \Perf_{V(I_A)}((R_U)_A) \cong \colim_{U \in \Uu} \Perf_{V(I_A)}(U_A).$$
The following two obervations conclude the proof:
\begin{itemize}
\item[(i)] We have $\Spec R \setminus \Max(R) \in \Uu$.

\item[(ii)] All the transition maps in the inverse system computing $\colim_{U \in \Uu} \Perf_{V(I_A)}(U_A)$ are equivalences. In particular, we have
$\Perf_{V(I_A)}(U_A) \cong \Perf_{V(I_A)}(\Spec (R_A)_{I})$
for each $U \in \Uu$.
\end{itemize}
Assertion (i) follows right from the definition of $\Uu$: since the minimal prime ideals above $I$ are of height $1$, they cannot contain any maximal ideals. Assertion (ii) fails to hold if one does not impose the support condition. The latter ensures that, for $U\subset V \in \Uu$ with $U \cap V(I) = V\cap V(I)$, we have that restriction induces an equivalence
$\Perf_{V(I_A)}(V_A) \to \Perf_{V(I_A)}(U_A).$
Since $I$ has height $1$ in $R$, the open set $V(I) \setminus \Max(R)$ consists precisely of the generic points of $V(I)$. Therefore, every $U \in \Uu$ intersects $V(I)$ in the same open subset $V(I) \setminus \Max(R)$. As we have just seen this implies that all transition maps
$\Perf_{V(I_A)}(V_A) \to \Perf_{V(I_A)}(U_A)$
for $U,V \in \Uu$ are equivalences. The two assertions (i) and (ii) imply now that $$\Perf_{V(I_A)}(\Spec (R_A)_{I}) \cong \Perf(\Spec R_A \setminus \Max(R)).$$
The second assertion of the Lemma is merely a reformulation, using the notation introduced in Definition \ref{defi:iteration}.
\end{proof}

\begin{corollary}\label{cor:ind_step}
Let $R$ and $A$ be a $k$-algebras, where $R$ is assumed to be Noetherian. We denote by $R_A$ the tensor product $R \otimes_k A$. Let $I_1 \subset I_0 \subset R$ be a chain of equiheighted ideals, such that $I_0$ induces an ideal of height $1$ in $R/I_1$ (i.e., relative codimension is $1$). Using the notation of Definition \ref{defi:iteration}, we have a natural equivalence
$$\Perf_{V(\widehat{I}_1)}(\Spec (\widehat{R_A}_{(I_0)})_{\widehat{I}_1}) \to \Perf(\Spec R_A)_{\widehat{(I_0)}[I_1]}.$$
\end{corollary}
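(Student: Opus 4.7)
The proof proceeds by translating the iterated categorical operations on the right-hand side into explicit ring-theoretic operations, using the tools developed in this subsection. Since $R$ is Noetherian, the ideal $I_0 \subset R$ is weakly proregular, and this property is preserved under the flat base change $\otimes_k A$; hence by Proposition \ref{prop:proregular} the derived completion agrees with the classical one, $\widehat{(R_A)}^{\der}_{I_0} \cong \widehat{R_A}_{I_0}$. Combining this with Proposition \ref{prop:affine_completion} gives $\Perf(R_A)_{\widehat{[I_0]}} \simeq \Perf(\widehat{R_A}_{I_0})$, and under this equivalence the localizing subcategory $\Sf_i = \Perf_{V(I_i)}(R_A)$ is carried to $\Perf_{V(\widetilde{I_i})}(\widehat{R_A}_{I_0})$ for $i\in\{0,1\}$, where $\widetilde{I_i}$ denotes the extension of $I_i$ to $\widehat{R_A}_{I_0}$.

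Next, the localization step (removing $V(\widetilde{I_0})$) is handled by Thomason--Trobaugh localization for perfect complexes (Proposition \ref{prop:char_K}(3)), which identifies $\Perf(R_A)_{\widehat{(I_0)}} \simeq \Perf(\Spec \widehat{R_A}_{I_0} \setminus V(\widetilde{I_0}))$, and the residual subcategory $[I_1]$ becomes $\Perf_{V(\widetilde{I_1}) \setminus V(\widetilde{I_0})}(\Spec \widehat{R_A}_{I_0} \setminus V(\widetilde{I_0}))$. I would then apply Lemma \ref{lemma:pre_ind_step} with $\widehat{R}_{I_0}$ playing the role of the ambient semi-local Noetherian ring and $\widetilde{I_1}$ in the role of the height-one ideal. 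The hypothesis ``$I_0$ has height $1$ in $R/I_1$'' translates, after $I_0$-adic completion, into the statement that $V(\widetilde{I_1})$ is a one-dimensional closed subscheme of the semi-local $\Spec \widehat{R}_{I_0}$ whose closed points are precisely $V(\widetilde{I_0})$. This is exactly the input needed for the proof of Lemma \ref{lemma:pre_ind_step} to run verbatim, and it yields an equivalence with $\Perf_{V(\widetilde{I_1})}(\Spec (\widehat{R_A}_{I_0})_{\widetilde{I_1}})$. By unwinding Definition \ref{defi:comp&loc}, the ring $(\widehat{R_A}_{I_0})_{\widetilde{I_1}}$ is exactly $\widehat{R_A}_{(I_0)}$ in the corollary's notation: one $\Loc\circ\Comp$ step applied to $(R_A,\xi)$.

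Finally, the outermost completion at $\widehat{I_1}$ is inserted using the compact-object form of \eqref{eqn:algebraizable_perfect}: completion at an ideal $J$ does not alter perfect complexes with set-theoretic support on $V(J)$. Applied with $S = \widehat{R_A}_{(I_0)}$ and $J = \widetilde{I_1}$, this gives
\[
\Perf_{V(\widetilde{I_1})}\bigl(\Spec \widehat{R_A}_{(I_0)}\bigr) \simeq \Perf_{V(\widehat{I_1})}\bigl(\Spec (\widehat{R_A}_{(I_0)})_{\widehat{I_1}}\bigr),
\]
which is precisely the left-hand side of the corollary. Chaining the four equivalences above produces the desired natural map, and naturality in the input data follows from the naturality of each constituent step.

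The main obstacle is the reapplication of Lemma \ref{lemma:pre_ind_step} in the third step above. That lemma is stated under an \emph{absolute} height-one hypothesis on $I\subset R$, whereas here the analogous condition is the \emph{relative} codimension-one hypothesis ``$I_0$ of height $1$ in $R/I_1$''. One must therefore carefully re-examine the two crucial inputs to the proof of Lemma \ref{lemma:pre_ind_step} --- namely, that $\Spec R \setminus \Max(R)$ lies in the filtered family $\Uu$ of open neighbourhoods of the generic points, and that all members of $\Uu$ meet $V(I)$ in the common open subset $V(I)\setminus\Max(R)$ --- and verify that they remain valid when $R$ is replaced by the semi-local Noetherian completion $\widehat{R}_{I_0}$ and $\Max(R)$ by $V(\widetilde{I_0})$. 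The relative-codim-one assumption is precisely what is needed for both conditions to persist, and the base-change to $A$ propagates through via the argument already used in Lemma \ref{lemma:pre_ind_step}.
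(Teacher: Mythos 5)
Your proposal is correct and follows essentially the same route as the paper: both proofs invoke Lemma~\ref{lemma:pre_ind_step} for the localization step and Proposition~\ref{prop:affine_completion} (together with Proposition~\ref{prop:proregular} to pass from derived to classical completion) for the formal-completion step, then compose the resulting equivalences. The caveat you raise at the end is a genuine gap in the literal hypotheses of Lemma~\ref{lemma:pre_ind_step} (the paper applies it when $\widetilde{I}_1$ generally has height $>1$ in $\widehat{R}_{I_0}$), and your resolution is correct: what the proof of that lemma actually uses is that $V(\widetilde{I}_1)$ is one-dimensional with closed points exactly $V(\widetilde{I}_0)$, which is exactly what the relative-codimension-one hypothesis delivers after completion; the paper's own proof glosses over this point.
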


\begin{proof}
Using Lemma \ref{lemma:pre_ind_step} we obtain the vertical equivalence in the commutative diagram of stable $\infty$-categories below
\[
\xymatrix{
\Perf(\Spec ((\widehat{R_A})_{I_0})_{\widehat{I}_1})_{[I_1]} \ar[r]^{\cong} \ar[d]_{\cong} & \Perf(\Spec R_A)_{\widehat{(I_0)}[I_1]} \\
\Perf(\Spec (\widehat{R_A})_{I_0})_{(I_0)[I_1]}.\ar[ru] &
}
\]
According to Definition \ref{defi:iteration}, the $\infty$-category in the bottom left corner agrees with the localization
$$(\Perf_{V(I_1)}(\Spec \widehat{R_A}^{\der}_{I_0})/\Perf_{V(I_0)}(\Spec \widehat{R_A}^{\der}_{I_0}))^{\ic}.$$
Hence, Proposition \ref{prop:affine_completion} yields the diagonal functor
$$\Perf(\Spec \widehat{R_A}_{I_0})_{(I_0)[I_1]} \to  \Perf(\Spec R_A)_{\widehat{(I_0)}[I_1]}. $$
Choosing an inverse for the vertical functor (well-defined up to a contractible space of choices), we obtain the required functor $\Perf_{V(\widehat{I}_1)}(\Spec (\widehat{R_A}_{I_0})_{\widehat{I}_1}) \to \Perf(\Spec R_A)_{\widehat{(I_0)}[I_1]}$.
\end{proof}

\begin{proof}[Proof of Proposition \ref{prop:efimov}]
We only give the proof of the second assertion, i.e. for $X$ a scheme over $k$. The first assertion is proven analogously. We may assume without loss of generality that $X$ is affine, since $Z_0$ is a finite union of closed points. Thus, let $R$ be a $k$-algebra, such that $X \cong \Spec R$.

Recall from Definition \ref{defi:HLF_complete_localize} that $A_{X,\xi}$ can be obtained by iteratively completing and localizing $R$ at a chain of equiheighted ideals $I_0 \supset I_1 \supset \dots \supset I_{n-1}$, corresponding to the closed subschemes $Z_0 \subset \cdots \subset Z_{n-1}$. We will use analogous notation for the ring
$$A_{X,\xi}=(\Loc \circ \Comp)^n R_A.$$
The asserted equivalence is a special case of the more general statement
\begin{equation}\label{eqn:adele_categories}
\Perf_{(Z_{k})_A}((\Loc \circ \Comp)^k R_A) \to \Perf(X_A)_{\widehat{(0,k-1)}[k]},
\end{equation}
which will be proven inductively.
Equation \eqref{eqn:adele_categories} for $k = 0$ amounts to the definition of $\Sf_0$:
$$\Perf_{(Z_0)_A}(R_A) \cong \Perf(X_A)_{[0]} = \Sf_0.$$
This will be the anchor point of our induction. We will prove that equation \eqref{eqn:adele_categories} holds for $k = m+1$ if it holds for $k = m$. Taking Ind-objects of both sides, and considering the (idempotent completion of the) essential image of $\Perf_{(Z_{m+1})_A}(R_A) \cong \Perf_{(Z_A)_{m+1}}(X_A)$, we see that (up to idempotent completion)
$$\Perf((\Loc \circ \Comp)^{m} R_A)_{\widehat{[m]}[m+1]} \cong \image[\Perf_{(Z_{m+1})_A}(R_A) \to \Ind \Perf_{(Z_{m})_A}((\Loc \circ \Comp)^m R_A)],$$
here we use that for a stable $\infty$-category $\C$ endowed with a flag of localizing subcategories, the functor $\C_{[i]} \to \C_{\widehat{(0,i)}[i+1]}$ is essentially surjective up to idempotent completion.
We have
$$\image[\Perf(X_A)_{[m+1]} \to \Ind \Perf(X_A)_{\widehat{(0,m-1)[m]}})] \cong \Perf(X_A)_{\widehat{(0,m-1)}\widehat{[m]}[m+1]},$$
by virtue of Definition \ref{defi:completion}. Since we are completing perfect complexes on the affine scheme $\Spec (\Loc \circ \Comp)^m R_A)$, Proposition \ref{prop:affine_completion} gives rise to a canonical functor
$$\Perf(\Comp(\Loc \circ \Comp)^{m} R_A)_{[m+1]} \to \Perf((\Loc \circ \Comp)^{m} R_A)_{\widehat{[m]}[m+1]} \cong \Perf(X_A)_{\widehat{(0,m)}[m+1]},$$
which is an equivalence. Corollary \ref{cor:ind_step} yields an equivalence
$$\Perf(\Loc \circ \Comp(\Loc \circ \Comp)^{m} R_A)_{[m+1]} \cong \Perf(\Comp(\Loc \circ \Comp)^{m} R_A)_{(m)[m+1]}.$$
Pairing this with the functoriality of localizing at the $m$-th localizing subcategory, we therefore obtain an equivalence
$$\Perf((\Loc \circ \Comp)^{m+1} R_A)_{[m+1]} \to \Perf(R_A)_{\widehat{(0,m)}[m+1]},$$
of stable $\infty$-categories.
\end{proof}

By similar techniques one proves the following:

\begin{theorem}\label{thm:general_efimov}
Let $X$ be an excellent, reduced $k$-scheme of pure dimension $n$, where $k$ is a field, and $A$ a $k$-algebra.
\begin{itemize}
\item[(a)] We denote by $\Sf_j \subset \Perf(X_A)$ the localizing subcategory given by the union of the subcategories $\Perf_{Z_A}(X_A)$ with $\dim Z \leq j$. Then we have $$\Perf(\Ab_X(|X|_n^{\red},\Oo_X)\widehat{\otimes}_k A) \cong \Perf(X_A)_{\widehat{(0,n)}}.$$

\item[(b)] Let $\xi\colon X = Z_n \supset  \cdots \supset Z_{i+1} \supset Z_{i-1} \supset \cdots \supset Z_0$ be an \emph{almost saturated flag} of equiheighted closed subschemes, satisfying $\dim Z_j = j$. Let $T_{\xi} \subset |X|_n^{\red}$ be the subset of reduced chains $\eta_0 \prec  \eta_1 \prec \cdots \prec \eta_n$, such that for $j \neq i$ we have that $\eta_j$ is a generic point of $Z_j$. Then we have the equivalence $\Perf(\Ab_X(T_{\xi},\Oo_X)\widehat{\otimes}_k A) \cong \Perf(X_A)_{\widehat{(0,n)}}$.
\end{itemize}
\end{theorem}

\begin{afterword}
Since the appearance of the preprint version of this paper on the arXiv in 2014, things have not been at a standstill. Gorchinskiy and Osipov have developed an alternative approach to a higher Contou-Carr\`ere symbol in their series of articles \cite{MR3438594}, \cite{MR3353121}. Their methods are entirely different from ours. Moreover, Musicantov and Yom Din have independently derived a similar reciprocity law \cite{MuY:2014}.
\end{afterword}

\bibliographystyle{amsalpha}
\bibliography{master,ollinewbib,refs}    

\end{document}